\documentclass[11pt]{article}
\pdfoutput=1

\def\pb{}

\usepackage[dvipsnames]{xcolor}

\def\beq{\begin{equation} }\def\eeq{\end{equation} }\def\1{\mathbf{1}}

\usepackage[framemethod=default]{mdframed}
\usepackage{caption}

\usepackage{indentfirst}
\usepackage{bm, mathrsfs, graphics,float,amssymb,amsmath,subeqnarray,setspace,graphicx,amsthm,epstopdf,subfigure, enumerate, color}
\usepackage[utf8]{inputenc}
\usepackage[colorlinks,
      linkcolor=red,
      anchorcolor=blue,
      citecolor=blue
      ]{hyperref}
\usepackage{natbib}
\usepackage{fullpage}
\numberwithin{equation}{section}

\newtheorem{lemma}{Lemma}
\newtheorem{theorem}{Theorem}

\newtheorem{definition}{Definition}
\newtheorem{corollary}[theorem]{Corollary}
\newtheorem{remark}{Remark}

\ifx\assumption\undefined
\newtheorem{assumption}{Assumption}
\fi

\newcommand{\cO}{\mathcal{O}}

\newcommand{\EE}{\mathbb{E}}
\newcommand{\RR}{\mathbb{R}}

\newcommand{\bA}{\bm{A}}
\newcommand{\bS}{\bm{S}}

\newcommand{\bH}{\bm{H}}
\newcommand{\bI}{\bm{I}}
\newcommand{\bD}{\bm{D}}
\newcommand{\bP}{\bm{P}}

\newcommand{\PP}{\mathbb{P}}

\def\BR{{\mathbb{R}}}
\newcommand{\x}{\bm{x}}
\newcommand{\s}{\bm{s}}
\newcommand{\vx}{\bm{x}}
\newcommand{\vy}{\bm{y}}
\newcommand{\y}{\bm{y}}
\newcommand{\si}{\bm{s}_i}
\newcommand{\g}{\mathbf{g}}
\newcommand{\bv}{\bm{v}}

\def\inner#1#2{\langle #1, #2 \rangle}
\usepackage{bbm}

\newcommand{\hE}{\hat{\cE}}

\def\cK{\mathcal{K}}

\newcommand{\cN}{\mathcal{N}}

\newcommand{\hL}{\hat{L}}

\newcommand{\hv}{\hat{\bv}}
\newcommand{\hg}{\hat{\g}}

\newcommand{\hgam}{\hat{\gamma}}
\newcommand{\hzt}{\hat{\zeta}}
\newcommand{\cE}{\mathcal{E}}

\usepackage{multirow}
\usepackage{tablefootnote}
\usepackage{colortbl}
\usepackage{hhline}

\usepackage{algorithm}
\usepackage{algorithmic}

\begin{document}
\title{
Why Does Adaptive Zeroth-Order Optimization Work?
}

\author{
	Haishan Ye
	\thanks{
		Xi'an Jiaotong University;
		email: hsye\_cs@outlook.com
	}
	\and
	Luo Luo
	\thanks{
		Fudan University;
		email: luoluo@gmail.com
	}
}
\date{
	\today}

\maketitle

\def\RB{\RR}
\def\TH{\tilde{H}}
\newcommand{\ti}[1]{\tilde{#1}}
\def\diag{\mathrm{diag}}
\newcommand{\norm}[1]{\left\|#1\right\|}
\newcommand{\dotprod}[1]{\left\langle #1\right\rangle}
\def\EB{\EE}
\def\tr{\mathrm{tr}}

\begin{abstract}
	Zeroth-order (ZO) optimization is popular in real-world applications that accessing the gradient information is expensive or unavailable. 
    Recently, adaptive ZO methods that normalize gradient estimators by the empirical standard deviation of function values have achieved strong practical performance, particularly in  fine-tuning the large language model. 
    However, the theoretical understanding of such strategy remains limited. 
    In this work, we show that the empirical standard deviation is, with high probability, closely proportional to the norm of the (stochastic) gradient. 
    Based on this insight, we analyze adaptive ZO methods under the generalized $(L_0,L_1)$-smoothness condition with respect to the matrix norm. 
    We establish explicit convergence rates and query complexity bounds for both deterministic and stochastic settings, demonstrating that adaptive ZO methods achieve the faster convergence and the improved query efficiency compared to the vanilla ZO methods with fixed-step.
\end{abstract}


\pb\section{Introduction}

Zeroth-order (ZO) optimization has become a fundamental pillar of modern machine learning.
It is widely used in a variety of applications where gradient information is unavailable or prohibitively expensive to obtain, including black-box adversarial attacks \citep{ilyas18a,chen2017zoo}, reinforcement learning \citep{chrabaszcz2018back,salimans2017evolution,mania2018simple}, memory-efficient large language model (LLM) fine-tuning \citep{malladi2023fine,dang2025fzoo}, etc.
The vanilla ZO algorithm estimates the gradient using a batch of function value queries. Specifically, the gradient estimator is constructed as
\begin{equation*}
	\g(\x) = \sum_{i=1}^\ell \frac{f(\x+\alpha \si) - f(\x)}{\alpha}\cdot \si,
\end{equation*}
where $f(\x)$ is the objective function, $\ell$ is the batch size, $\alpha>0$ is the smoothing parameter, and $\si \sim \cN(\bm{0}, \bm{I}_d / \ell)$ with $\bm{I}_d$ being the $d\times d$ identity matrix.

When minimizing a stochastic objective, where only noisy function evaluations $f(\x;\;\xi)$ are accessible, the corresponding stochastic gradient estimator is given by
\begin{equation*}
	\hg(\x) = \sum_{i=1}^\ell \frac{f(\x+\alpha \si;\;\xi) - f(\x;\;\xi)}{\alpha} \cdot \si.
\end{equation*}
Using either estimator, the vanilla ZO algorithm \citep{Nesterov2017,ghadimi2013stochastic} updates the iterate according to
\begin{equation*}
	\x_{t+1} = \x_t - \eta \g(\x_t) \qquad\mbox{ or }\qquad
	\x_{t+1} = \x_t - \eta \hg(\x_t),
\end{equation*}
where $0<\eta$ denotes the step size.
This vanilla ZO scheme has been successfully applied to black-box adversarial attacks \citep{ilyas18a}, and  memory-efficient LLM tuning \citep{malladi2023fine}.

To further improve the performance of vanilla ZO methods, a popular enhancement is to adaptively control the step size by normalizing the estimated gradient using the standard deviation $\sigma_t$ of the $\ell$ sampled function values (see Eq.~\eqref{eq:sig} and Eq.~\eqref{eq:sig1}). The resulting update rule takes the form
\begin{equation}\label{eq:norm}
	\x_{t+1} = \x_t - \eta \cdot\frac{\g(\x_t)}{\sigma_t} \qquad\mbox{ or }\qquad
	\x_{t+1} = \x_t - \eta \cdot\frac{\hg(\x_t)}{\sigma_t}.
\end{equation}
The intuition behind this normalization is that a small standard deviation $\sigma_t$ indicates that the current iterate $\x_t$ lies in a relatively flat region of the landscape, and therefore a larger effective step size is desirable.

Empirically, this adaptive ZO strategy achieves substantial improvements over the vanilla ZO method, particularly in large-scale language model applications. For instance, \citet{dang2025fzoo} apply the adaptive ZO update in Eq.~\eqref{eq:norm} to LLM fine-tuning and demonstrate significantly higher query efficiency than MeZO \citep{malladi2023fine}, which adopts a fixed-step vanilla ZO method. More recently, adaptive ZO methods have also shown promising performance in reinforcement learning from human feedback \citep{qiu2025evolution}.

Despite these empirical successes, a rigorous theoretical understanding of why adaptive ZO methods outperform vanilla ZO methods remains largely unexplored. In this paper, we aim to bridge this gap by providing a principled theoretical explanation for the advantages of adaptive ZO methods. We summarize our main contribution as follows.
\begin{itemize}
\item First, we show that the standard deviation $\sigma_t$ is, with high probability, nearly proportional to the norm of the (stochastic) gradient. This result formally justifies the intuition that a small $\sigma_t$ corresponds to a flat landscape around the current iterate.
\item Second, we analyze the convergence rate and query complexity of adaptive ZO methods for generalized $(L_0,L_1)$-smooth functions with respect to the $\bH$-norm, where $\bH$ is a $d\times d$ positive semi-definite matrix, rather than the standard Euclidean norm. 
The notion of original $(L_0,L_1)$-smoothness was first introduced by \citet{zhang2019gradient} to demonstrate the superiority of gradient clipping in real-world applications. 
In this work, we further generalize this setting to accommodate the $\bH$-norm smoothness.
\item Finally, we establish explicit convergence rates and query complexity bounds for adaptive ZO methods in both deterministic and stochastic settings on the generalized $(L_0,L_1)$-smooth functions. 
Our results demonstrate that adaptive ZO methods can \emph{converge arbitrarily faster than fixed-step vanilla ZO methods}. Consequently, the query complexity of adaptive ZO methods can be \emph{arbitrarily smaller than that of fixed-step vanilla ZO methods}. A detailed discussion of these improvements is provided in Remark~\ref{remark:deter-M} and Remark~\ref{rmk:sto_M}. 
\end{itemize}

\subsection{Related Works}

\paragraph{Adaptive Zeroth-Order Method.} 
The adaptive ZO method rediscovered independently times and times.
\citet{salimans2017evolution} use a similar normalization as the one in Eq.~\eqref{eq:norm} and call it as the virtual batch normalization of the neural network policies.
\citet{mania2018simple} improve the work of \citet{salimans2017evolution} and add second-order information to achieve faster convergence rate.
\citet{lyu2019black} propose an algorithm called INGO which is similar to the adaptive ZO method but with second-order information.
Furthermore, \citet{lyu2019black} also show that INGO will converge a stationary point even if the objective function is nonconvex.
Recently, \citet{dang2025fzoo} propose FZOO which is also a kind of adaptive ZO method but with  Rademacher random vectors as the search directions
instead of the Gaussian vectors. 
\citet{dang2025fzoo} first realize that the standard deviation $\sigma_t$ is related to the (stochastic) gradient norm and show that $\EE\left[\sigma_t^2\right]$ is almost proportional to the square of (stochastic) gradient norm.
However, the property of $\EE\left[\sigma_t^2\right]$ can \emph{not} directly derive that $\sigma_t$ should be proportional to the (stochastic) gradient norm. 
Furthermore, \citet{dang2025fzoo} only show that FZOO will converge to a stationary point but  \emph{without} a sound theory to show FZOO will outperform vanilla ZO method. 

{\paragraph{Relaxed Smoothness.}

The empirical observations in training neural networks indicate that the relaxed smoothness can better characterize the objective function  than the standard smoothness \citep{zhang2019gradient,crawshaw2022robustness,cooper2024empirical}.
The (stochastic) first-order methods for optimization problems under relaxed smoothness conditions have been extensively studied in recent years \citep{chen2023generalized,vankov2024optimizing,li2024problem,li2023convergence,gaash2025convergence,gorbunov2024methods,koloskova2023revisiting,chezhegov2025convergence,tovmasyan2025revisiting,crawshaw2025complexity,lobanov2024linear,tyurin2024toward,reisizadeh2025variance,zhang2020improved,yu2025mirror,khirirat2024error,khirirat2024communication,jiang2025decentralized,borodich2025nesterov}.
Notably, incorporating the step of normalization or clipping into stochastic gradient descent (SGD) can efficiently achieve  approximate stationary points of  relaxed smooth functions, which cloud be arbitrarily faster than the vanilla SGD with the fixed stepsize when the initialization is poor \citep{zhang2019gradient}. 
Additionally, \citet{xie2024trust} introduce the trust-region methods to further improve the convergence by additional assuming the second-order smoothness condition.
Recently, \citet{lobanov2025power} apply the uniform smoothing to establish the zeroth-order variants of clipped and normalized gradient descent, achieving the linear convergence rates for convex objectives with sufficient large batch sizes. 
}


\section{Preliminaries}

In this section, we first formalization the notations and our problem settings, then introduce the background of zeroth-order optimization.

\subsection{Notations}

We introduce notations used throughout this paper. 
A symmetric matrix $\bA \in \RR^{d \times d}$ is called positive semi-definite if it holds that $\x^\top \bA \x \geq 0$ for any non-zero vector $\x \in \RR^d$.
We let~$\bA = \bm{U} \bm{\Lambda} \bm{U}^\top $ be the eigenvalue decomposition of positive definite matrix $\bA$, where $\bm{U}$ is an orthonormal matrix which contains the eigenvectors of $\bA$ and $\bm{\Lambda} = \diag(\lambda_1,\lambda_2,\dots, \lambda_d)$ is a diagonal matrix with $\lambda_1 \ge \lambda_2\ge\dots\ge \lambda_d >0$. 
This paper uses $\tr(\bA) = \sum_{i=1}^d A_{ii}$ to denote the trace of the matrix~$\bA$.
Given two vectors $\x, \y\in\RR^d$, we use $\dotprod{\x,\y} = \sum_{i=1}^{d} x_i y_i$ to denote their inner product.
Given symmetric positive semi-definite matrix $\bA$, we let $\|\bA\|_{F} \triangleq (\sum_{i,j}A_{ij}^{2})^{1/2}=(\sum_{i}\lambda_{i}^{2})^{1/2}$
be its Frobenius norm and
$\|\bA\|\triangleq \lambda_{1}$ be its the spectral norm. 
For given vector $\x\in\RR^d$, we define its Euclidean norm and $\bA$-norm as $\norm{\x} \triangleq \sqrt{\sum_{i=1}^d x_i^2}$ and 
$\|\x\|_{\bA} \triangleq \sqrt{\x^\top\bA\x}$, respectively.

\subsection{Problem Settings}

First, we  consider the following deterministic minimization problem
\begin{equation}\label{eq:obj0}
	\min_{\x\in\RR^d} f(\x),
\end{equation}
where the ZO algorithms can access the function value $f(\x)$ for given $\x\in\BR^d$. 
We also study the stochastic formulation
\begin{equation}\label{eq:obj1}
	\min_{\x\in\RR^d} f(\x) = \EE_\xi\left[f(\x;\xi)\right],
\end{equation} 
where the ZO algorithms can draw the random variable $\xi$ to access the stochastic function value evaluation for given $\x\in\BR^d$.

Next, we impose assumptions for above optimization problems.

\begin{assumption}\label{ass:LL}
	We suppose the differentiable function $f:\BR^d\to\BR$ is $(L_0, L_1)$-smooth with respect to a positive semi-definite matrix $\bH\in\BR^{d\times d}$, i.e., if for all $\x,\y \in\RR^d$ with $\norm{\y - \x}_{\bH} \leq 1/{(L_1\sqrt{\norm{\bH}})}$ holds that
	\begin{equation}\label{eq:LL}
	 \Big|	f(\y) - f(\x) - \dotprod{\nabla f(\x), \y-\x} \Big| \leq \frac{L_0 + L_1 \norm{\nabla f(\x)}}{2} \norm{\y - \x}_{\bH}^2.
	\end{equation}
\end{assumption}

\begin{assumption}\label{ass:lower-bounded}
	We suppose the  objective $f:\BR^d\to\BR$ is lower bounded, i.e., it holds~$f^*=\inf_{\x\in\BR^d}f(\x)>-\infty$.
\end{assumption}

\begin{assumption}[{\citet{zhang2019gradient}}]\label{ass:var}
We suppose  the stochastic gradient $\nabla f(\x;\xi)$ in formulation (\ref{eq:obj1}) has bounded deviation, i.e., it holds $\norm{\nabla f(\x;\;\xi) - \nabla f(\x)} \leq \nu$ for all $\x\in\BR^d$, where $\nu > 0 $ is a constant.
\end{assumption}

Note that Assumption~\ref{ass:LL} is a generalization of $(L_0, L_1)$-smooth introduced by \citet{zhang2020improved}, since we use the $\bH$-norm to describe the distance in this condition.
Clearly, Assumption~\ref{ass:LL} will reduce to standard $(L_0, L_1)$-smooth by taking $\bH$ be the identity matrix.
In fact, Assumption~\ref{ass:LL} is important to the analysis of the adaptive ZO method, and it is helpful to show the adaptive ZO method can achieve the much faster convergence rate than the vanilla ZO method.


\subsection{Oblivious Random Sketching Matrix}
\label{subsec:Sketch}

In this subsection, we introduce the notion of oblivious random sketching matrix.
Note that the recent work \citep{ye2025unified} showed that the search directions of ZO methods drawn from random sketching matrices can help ZO methods achieve weak dimension dependent query complexity (refer to Remark~\ref{rmk:Q} and Remark~\ref{rmk:Q1}). 
\begin{definition}[Sketching in Matrix Product]\label{def:ske}
	Given a matrix $\bA \in \mathbb{R}^{d \times d}$, parameters $\gamma\in(0,1]$, $k \geq 1$, and $\delta > 0$, we say a matrix $\bS \in \mathbb{R}^{d \times \ell}$, independent of $\bA$, is an oblivious $(\gamma, k, \delta)$-random sketching matrix for matrix product if the following holds with probability at least $1 - \delta$:
	\begin{equation}\label{eq:mat_pd}
		\norm{\bA^\top \bS\bS^\top \bA - \bA^\top \bA} \le \gamma \left(\norm{\bA}^2 + \frac{\norm{\bA}_F^2}{k}\right) .
	\end{equation} 
\end{definition}

\begin{itemize}
\item \textbf{Gaussian sketching matrix:} The most classical sketching matrix is the Gaussian sketching matrix $\bS\in\RR^{\ell\times d}$, whose entries are i.i.d.\ from the univariate Gaussian distribution of mean 0 and variance $1/\ell$. 
Owing to the well-known concentration properties \citep{woodruff2014sketching},  Gaussian random matrices are very attractive. 
It is only requires $\ell = \Omega\left((k + \log({1}/{\delta}))\gamma^{-2}\right)$ to achieve the  $(\gamma, k, \delta)$-approximate matrix product property defined in Definition~\ref{def:ske} \citep{cohen2016optimal}.

\item \textbf{Rademacher Sketching:} As a discrete alternative, the Rademacher matrix uses i.i.d. entries from $\{ \pm 1/\sqrt{\ell} \}$. It shares nearly identical sample complexity bounds with Gaussian matrices but is more efficient in terms of random bit consumption. Recent work has also demonstrated its efficacy in accelerating zeroth-order optimization for fine-tuning large language models \citep{dang2025fzoo}.

\item \textbf{SRHT:} Structured sketching via SRHT, formulated as $\bS = \bP\bH_d\bD$, aims to bridge the gap between statistical efficiency and computational speed \citep{woodruff2014sketching}.  
Matrix $\bH_d\in\RR^{d \times d}$ is the Walsh-Hadamard matrix with $+1$ and $-1$ entries. $\bD \in\RR^{d \times d}$ is a diagonal matrix with diagonal entries sampled uniformly from $\{+1,-1\}$, and $bP\in\RR^{s\times d}$ is the uniform sampling matrix.  While the required $\ell$ is slightly higher than Gaussian matrices by logarithmic factors, the fast Walsh-Hadamard transform allows for $\bS\bA$ to be computed in $\mathcal{O}(d^2 \log \ell)$ (for $d \times d$ matrices), offering a significant speedup for dense data. 

\item \textbf{Sparse Embedding:} These matrices optimize for sparsity by maintaining only $s \ll \ell$ non-zero entries per column. Based on \citet{cohen2016simpler}, a sketch size of $\ell = \Omega(k \log(k/\delta) \gamma^{-2})$ and sparsity $s = \mathcal{O}(\log(k/\delta) \gamma^{-1})$ are sufficient for the $(\gamma, k, \delta)$-property. This construction is particularly effective for high-dimensional sparse data where preserving sparsity is crucial.
\end{itemize}

\subsection{Algorithm Description of Adaptive Zeroth-Order Method}

\begin{algorithm}[t]
	\caption{Adaptive Zeroth-Order Method for Deterministic Optimization }
	\label{alg:SA}
	\begin{small}
		\begin{algorithmic}[1]
			\STATE {\bf Input:}
			Initial vector $x_0$, sample size $\ell$, and step size $\eta$.
			\FOR {$t=0,1,2,\dots, T-1$ }
			\STATE Generate a $(\gamma, k, \delta)$-oblivious sketching matrix $\bS_t \in\RR^{d \times \ell}$ for the matrix product with $\si$ be the $i$-th column of $\bS_t$.
			\STATE Access to the value of $f(\cdot)$ and construct the approximate gradient
			\begin{equation*}
				\g(\x_t) = \sum_{i=1}^\ell \frac{f(\x_t+\alpha \si) - f(\x_t)}{\alpha} \si.
			\end{equation*}
			\STATE Compute the standard deviation as
			\begin{equation}\label{eq:sig}
				\sigma_t = \sqrt{
					\frac{1}{\ell - 1}\sum_{j=1}^{\ell} \left( f(\x_t + \alpha \bm{s}_j) - \frac{1}{\ell} \sum_{i=1}^{\ell} f(\x_t + \alpha \si) \right)^2}
			\end{equation}
			\STATE Update as
			\begin{equation}\label{eq:update}
				\x_{t+1} = \x_t - \eta\cdot \frac{\g(\x_t)}{\sigma_t} 
			\end{equation}
			\ENDFOR
			\STATE {\bf Output:} $\x_T$
		\end{algorithmic}
	\end{small}
\end{algorithm}

\begin{algorithm}[t]
	\caption{Adaptive Zeroth-Order Method for Deterministic Optimization }
	\label{alg:SA1}
	\begin{small}
		\begin{algorithmic}[1]
			\STATE {\bf Input:}
			Initial vector $x_0$, sample size $\ell$, and step size $\eta$.
			\FOR {$t=0,1,2,\dots, T-1$ }
			\STATE Generate a $(\gamma, k, \delta)$-oblivious sketching matrix $\bS_t \in\RR^{d \times \ell}$ for the matrix product with $\si$ be the $i$-th column of $\bS_t$.
			\STATE Access to the value of $f(\cdot)$ and construct the approximate gradient
			\begin{equation*}
				\hg(\x_t) = \sum_{i=1}^\ell \frac{f(\x_t+\alpha \si;\; \xi_t) - f(\x_t;\;\xi_t)}{\alpha} \si.
			\end{equation*}
			\STATE Compute the standard deviation as
			\begin{equation}\label{eq:sig1}
				\sigma_t = \sqrt{
					\frac{1}{\ell - 1}\sum_{j=1}^{\ell} \left( f(\x_t + \alpha \bm{s}_j;\;\xi_t) - \frac{1}{\ell} \sum_{i=1}^{\ell} f(\x_t + \alpha \si;\;\xi_t) \right)^2}
			\end{equation}
			\STATE Update as
			\begin{equation*}
				\x_{t+1} = \x_t - \eta\cdot\frac{\hg(\x_t)}{\sigma_t+\beta} 
			\end{equation*}
			\ENDFOR
			\STATE {\bf Output:} $\x_T$
		\end{algorithmic}
	\end{small}
\end{algorithm}

Let $\bS \in \mathbb{R}^{d \times \ell}$ be a pre-defined matrix, and  $\si$ denote its $i$-th column. 
We approximate the gradient of $f:\BR^d\to\BR$ at a given point $\x_t$ by
\begin{equation}
	\g(\x_t) = \sum_{i=1}^\ell \frac{f(\x_t+\alpha \si) - f(\x_t)}{\alpha} \cdot\si, \label{eq:g}
\end{equation}
were $\alpha>0$.
For stochastic minimization problem \eqref{eq:obj1}, we can only access noisy function evaluations $f(\,\cdot\,;\xi_t)$ to approximate the stochastic gradient $\nabla f(\x_t;\;\xi_t)$ as 
\begin{equation}
	\hg(\x_t) = \sum_{i=1}^\ell \frac{f(\x_t+\alpha \si;\xi_t) - f(\x_t;\xi_t)}{\alpha} \si. \label{eq:g1}
\end{equation} 

Furthermore, the standard deviations for the (noisy) function evaluations at point $\x_t+\alpha \si$ is
\begin{align*}
\sigma_t = \sqrt{
	\frac{1}{\ell - 1}\sum_{j=1}^{\ell} \left( f(\x_t + \alpha \bm{s}_j) - \frac{1}{\ell} \sum_{i=1}^{\ell} f(\x_t + \alpha \si) \right)^2}
\end{align*}
for the deterministic case
and
\begin{align*}
\sigma_t = \sqrt{
	\frac{1}{\ell - 1}\sum_{j=1}^{\ell} \left( f(\x_t + \alpha \bm{s}_j;\;\xi_t) - \frac{1}{\ell} \sum_{i=1}^{\ell} f(\x_t + \alpha \si;\;\xi_t) \right)^2}.
\end{align*}
for the stochastic cases.

Once the standard deviation obtained, we can conduct the update for the deterministic case as
\begin{align*}
\x_{t+1} = \x_t - \eta \cdot\frac{\g(\x_t)}{\sigma_t},
\end{align*}
where $\eta$ is the step size. 
Similarly, we conduct the update for the stochastic case as 
\begin{align*}
\x_{t+1} = \x_t - \eta\cdot \frac{\g(\x_t)}{\sigma_t + \beta},
\end{align*}
where $\beta$ is a positive constant.

The detailed algorithm descriptions of the adaptive zeroth-order method for deterministic and stochastic optimization are described in Algorithm~\ref{alg:SA} and Algorithm~\ref{alg:SA1}, respectively.

\section{Convergence Analysis for the Deterministic Problem}

This section present the convergence analysis of Algorithm~\ref{alg:SA} for the deterministic problem (\ref{eq:obj0}).

\subsection{Events}
Assume the objective function $f(\x)$ satisfies Assumption~\ref{ass:LL} and $\bS_t$ is an  oblivious $\left(\frac{1}{4}, k, \delta\right)$-random sketching matrix just as defined in Definition~\ref{def:ske} with $4 \leq k $ and $0 <\delta <1$. 
Then, we will define the following events
\begin{align}
	&\cE_{t,1} := \left\{ \sum_{i=1}^{\ell}  |\si^\top \bH \si|^2 \leq \zeta_1 \mid \si \mbox{ is the } i\mbox{-th column of } \bS_t \right\}, \label{eq:E1}\\
	&\cE_{t,2} := \left\{ \left(\frac{3}{4} - \frac{1}{k}\right) \norm{\nabla f(\x_t)}^2 
	\leq 	\nabla^\top f(\x_t) \bS_t\bS_t^\top \nabla f(\x_t) 
	\leq \left(\frac{5}{4} + \frac{1}{k}\right)  \norm{\nabla f(\x_t)}^2 \right\}, \label{eq:E2}\\
	&\cE_{t,3} := \left\{\norm{ \bS_t^\top \bH \bS_t }_2 \leq \zeta_2\right\}, \label{eq:E3}\\
	&\cE_{t,4} := \left\{\frac{1}{\ell}  \left( \sum_{i=1}^{\ell} \dotprod{\nabla f(\x_t), \si} \right)^2 \leq \frac{1}{4}\norm{\nabla f(\x_t)}^2\right\}, \label{eq:E4}
\end{align} 
where $\zeta_1$, and $\zeta_2$ are defined in Eq.~\eqref{eq:zeta_1}, and Eq.~\eqref{eq:zeta_2}, respectively.

Next, we will bound the probabilities of above events happen.
\begin{lemma}\label{lem:E1}
Letting $\bS_t$ be an  oblivious $\left({1}/{4}, k, \delta\right)$-random sketching matrix follows Definition~\ref{def:ske} with $k\geq 4 $ and $\delta\in(0,1)$, then event $\cE_{t,1}$ holds with a probability at least $1-\delta$ with $\zeta_1$ defined as	
\begin{equation}\label{eq:zeta_1}
	\zeta_1 := \norm{\bH}^2 \left(\frac{5\ell}{4} + \frac{d \ell}{4k}\right)^2.
\end{equation}
\end{lemma}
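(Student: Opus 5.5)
We need $\sum_i (\si^\top\bH\si)^2 \le \zeta_1$ with probability at least $1-\delta$. The plan is to first remove $\bH$ and then control $\sum_i \norm{\si}^2$ by applying the sketching property of Definition~\ref{def:ske} to the identity matrix.

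\textbf{Step 1 (removing $\bH$).} Since $\bH$ is positive semi-definite, each $\si^\top\bH\si$ is nonnegative and satisfies $\si^\top\bH\si \le \norm{\bH}\norm{\si}^2$. Because all terms are nonnegative,
\begin{equation*}
\sum_{i=1}^\ell |\si^\top\bH\si|^2 \le \Big(\sum_{i=1}^\ell \si^\top\bH\si\Big)^2 \le \norm{\bH}^2\Big(\sum_{i=1}^\ell \norm{\si}^2\Big)^2 = \norm{\bH}^2 \norm{\bS_t}_F^4 .
\end{equation*}
It therefore suffices to show that, with probability at least $1-\delta$,
\begin{equation*}
\norm{\bS_t}_F^2 \le \frac{5\ell}{4} + \frac{d\ell}{4k}.
\end{equation*}

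\textbf{Step 2 (applying the sketching property).} Take $\bA = \bI_d$ in Definition~\ref{def:ske}. This is legitimate because $\bI_d$ is fixed and hence independent of $\bS_t$. With probability at least $1-\delta$ we get
\begin{equation*}
\norm{\bS_t\bS_t^\top - \bI_d} \le \frac14\Big(1+\frac{d}{k}\Big),
\end{equation*}
and therefore $\norm{\bS_t\bS_t^\top} \le \frac54 + \frac{d}{4k}$. Since $\bS_t\bS_t^\top$ has rank at most $\ell$,
\begin{equation*}
\norm{\bS_t}_F^2 = \tr(\bS_t\bS_t^\top) \le \ell\,\norm{\bS_t\bS_t^\top} \le \frac{5\ell}{4} + \frac{d\ell}{4k}.
\end{equation*}
Squaring this bound and combining it with Step~1 gives exactly $\zeta_1$ in Eq.~\eqref{eq:zeta_1}.

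\textbf{Main subtlety.} The crude bound $\sum_i a_i^2 \le (\sum_i a_i)^2$ is valid only because every $a_i = \si^\top\bH\si$ is nonnegative. The trace-versus-spectral-norm step $\tr(\cdot)\le \mathrm{rank}(\cdot)\,\norm{\cdot}$ is what produces the factor $\ell$ rather than $d$. Everything else is a direct consequence of the sketching guarantee, and the conditions $k\ge 4$ and $\delta\in(0,1)$ play no further role in this lemma.
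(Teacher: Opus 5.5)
Your proof is correct and follows the paper's argument: both reduce $\sum_{i}|\si^\top\bH\si|^2$ to $\norm{\bH}^2\norm{\bS_t}_F^4$ and then use $\norm{\bS_t}_F^2 \le \frac{5\ell}{4}+\frac{d\ell}{4k}$. The only difference is that the paper imports this Frobenius bound as Lemma~11 of \citet{ye2025unified}, whereas you derive it from Definition~\ref{def:ske} with $\bA=\bI_d$ and the rank bound $\tr(\bS_t\bS_t^\top)\le \ell\norm{\bS_t\bS_t^\top}$, which makes the argument self-contained.
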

\begin{proof}
	We have
\begin{align*}
\sum_{i=1}^{\ell}  |\si^\top \bH \si|^2 
\leq
\norm{\bH}^2 \sum_{i=1}^{\ell} \norm{\si}^4
\leq 
\norm{\bH}^2 \norm{\bS_t}_F^4
\stackrel{\eqref{eq:S_norm}}{\leq}
\norm{\bH}^2 \left(\frac{5\ell}{4} + \frac{d \ell}{4k}\right)^2,
\end{align*}
where the last inequality is because of Lemma~\ref{lem:bound_S_F_norm}.
\end{proof}

\begin{lemma}\label{lem:E2}
	Letting $\bS\in\RR^{\ell\times d}$ is an oblivious $\left({1}/{4}, k, \delta\right)$-random sketching matrix follows Definition~\ref{def:ske} with $k\geq 4$ and $\delta\in(0,1)$, then event $\cE_{t,2}$ holds with  a probability at least $1-\delta$.
\end{lemma}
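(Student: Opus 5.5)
The plan is to apply Definition~\ref{def:ske} to a rank-one matrix built from the gradient. Take $\bA = \nabla f(\x_t)\, \bm{e}_1^\top \in \RR^{d\times d}$, i.e.\ the matrix whose first column is $\nabla f(\x_t)$ and whose other columns are zero. Because $\bS_t$ is oblivious (independent of $\bA$) and $\x_t$ depends only on $\bS_0,\dots,\bS_{t-1}$, I will condition on $\x_t$; the matrix $\bA$ is then fixed. Definition~\ref{def:ske} with $\gamma = 1/4$ then gives, with probability at least $1-\delta$,
\begin{equation*}
\norm{\bA^\top \bS_t\bS_t^\top \bA - \bA^\top \bA} \le \frac{1}{4}\left(\norm{\bA}^2 + \frac{\norm{\bA}_F^2}{k}\right).
\end{equation*}

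Next I will evaluate each term for this rank-one $\bA$. We have $\bA^\top \bS_t\bS_t^\top \bA = \big(\nabla^\top f(\x_t)\bS_t\bS_t^\top\nabla f(\x_t)\big)\,\bm{e}_1\bm{e}_1^\top$ and $\bA^\top\bA = \norm{\nabla f(\x_t)}^2\bm{e}_1\bm{e}_1^\top$. So the spectral norm on the left equals $\big|\nabla^\top f(\x_t)\bS_t\bS_t^\top\nabla f(\x_t) - \norm{\nabla f(\x_t)}^2\big|$. On the right, both $\norm{\bA}$ and $\norm{\bA}_F$ equal $\norm{\nabla f(\x_t)}$, since $\bA$ has rank one. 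Substituting, the deviation is at most $\frac{1}{4}(1+\frac1k)\norm{\nabla f(\x_t)}^2$.

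This yields the two-sided bound
\begin{equation*}
\left(\tfrac34 - \tfrac{1}{4k}\right)\norm{\nabla f(\x_t)}^2 \le \nabla^\top f(\x_t)\bS_t\bS_t^\top\nabla f(\x_t) \le \left(\tfrac54 + \tfrac{1}{4k}\right)\norm{\nabla f(\x_t)}^2,
\end{equation*}
which is even slightly stronger than event $\cE_{t,2}$, because $\frac{1}{4k}\le\frac1k$. Taking expectation over $\x_t$ removes the conditioning, so the unconditional probability is still at least $1-\delta$.

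The only delicate point is this conditioning and obliviousness step. One must make sure that $\bA$ is independent of $\bS_t$, which holds because fresh sketches are drawn at each iteration. There is also a notational mismatch: the lemma writes $\bS\in\RR^{\ell\times d}$ while the algorithm uses $\bS_t\in\RR^{d\times\ell}$. I will use the $d\times\ell$ convention so that $\bA^\top\bS_t\bS_t^\top\bA$ is well defined.
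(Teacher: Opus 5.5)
Your proposal is correct and is essentially the paper's argument: the paper applies Eq.~\eqref{eq:mat_pd} directly with $\bA=\nabla f(\x_t)$, then unpacks the resulting scalar deviation bound into the two-sided estimate via the triangle inequality, exactly as you do. Your padding $\bA=\nabla f(\x_t)\bm{e}_1^\top$ to match the $d\times d$ shape in Definition~\ref{def:ske}, and your explicit conditioning on $\x_t$ using the independence of the fresh $\bS_t$, make rigorous what the paper leaves implicit; you also correctly note that the argument yields the sharper constants $\frac34-\frac{1}{4k}$ and $\frac54+\frac{1}{4k}$.
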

\begin{proof}
	By Eq.~\eqref{eq:mat_pd}, we can obtain that
	\begin{align*}
		\norm{ \nabla^\top f(\x_t) \bS \bS^\top \nabla f(\x_t) -  \nabla^\top f(\x_t) \nabla f(\x_t)} 
		\leq 
		\frac{1}{4}  \left( \norm{\nabla f(\x_t)}^2 + \frac{\norm{\nabla f(\x_t)}^2}{k} \right).
	\end{align*}
	Furthermore, it holds that
	\begin{align*}
		\norm{ \nabla^\top f(\x_t) \bS \bS^\top \nabla f(\x_t)  } - \norm{\nabla f(\x_t)}^2 
		\leq&  
		\norm{ \nabla^\top f(\x_t) \bS \bS^\top \nabla f(\x_t) -  \nabla^\top f(\x_t) \nabla f(\x_t)},  \\
		\norm{\nabla f(\x_t)}^2 - \norm{ \nabla^\top f(\x_t) \bS \bS^\top \nabla f(\x_t)  }  
		\leq& 
		\norm{ \nabla^\top f(\x_t) \bS \bS^\top \nabla f(\x_t) -  \nabla^\top f(\x_t) \nabla f(\x_t)}. 
	\end{align*}
	Thus, we can obtain that
	\begin{align*}
		\left(\frac{3}{4} - \frac{1}{k}\right) \norm{\nabla f(\x_t)}^2 \leq 	\norm{ \nabla^\top f(\x_t) \bS \bS^\top \nabla f(\x_t)  } \leq \left(\frac{5}{4} + \frac{1}{k}\right)  \norm{\nabla f(\x_t)}^2 .
	\end{align*}
\end{proof}

\begin{lemma}\label{lem:E3}
	Letting $\bS\in\RR^{d\times \ell}$ be  an oblivious $({1}/{4}, k, \delta)$-random sketching matrix, then event $\cE_{t,3}$ holds with a probability at least $1-\delta$ with $\zeta_2$ defined as 
	\begin{equation}\label{eq:zeta_2}
		\zeta_2 := \frac{5\norm{\bH}}{4} +  \frac{\tr(\bH)}{4k}.
	\end{equation} 
\end{lemma}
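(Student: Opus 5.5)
Apply Definition~\ref{def:ske} with $\bA = \bH^{1/2}$, the symmetric positive semi-definite square root of $\bH$. This square root exists since $\bH$ is positive semi-definite, and $\bS_t$ is oblivious, hence independent of $\bA$. The first step is to rewrite the quantity of interest:
\begin{equation*}
\norm{\bS_t^\top \bH \bS_t} = \norm{\bS_t^\top \bH^{1/2}\bH^{1/2}\bS_t} = \norm{\bH^{1/2}\bS_t}^2 = \norm{\bH^{1/2}\bS_t\bS_t^\top \bH^{1/2}}.
\end{equation*}
This holds because $\norm{\bm{M}^\top\bm{M}} = \norm{\bm{M}\bm{M}^\top} = \norm{\bm{M}}^2$ for any matrix $\bm{M}$; here we take $\bm{M} = \bH^{1/2}\bS_t$. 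So it suffices to bound the spectral norm of $\bA^\top \bS_t\bS_t^\top \bA$ with $\bA = \bH^{1/2}$.

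Next, Eq.~\eqref{eq:mat_pd} with $\gamma = 1/4$ holds with probability at least $1-\delta$. On that event, the triangle inequality gives
\begin{equation*}
\norm{\bA^\top \bS_t\bS_t^\top \bA} \le \norm{\bA^\top\bA} + \frac{1}{4}\left(\norm{\bA}^2 + \frac{\norm{\bA}_F^2}{k}\right).
\end{equation*}
We then substitute $\norm{\bA^\top \bA} = \norm{\bA}^2 = \norm{\bH}$ and $\norm{\bA}_F^2 = \tr(\bA^\top\bA) = \tr(\bH)$. This yields exactly
\begin{equation*}
\norm{\bS_t^\top\bH\bS_t} \le \frac{5\norm{\bH}}{4} + \frac{\tr(\bH)}{4k} = \zeta_2,
\end{equation*}
so $\cE_{t,3}$ holds with probability at least $1-\delta$.

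There is no real obstacle. The only points to be careful about are, first, the identification of $\norm{\bS_t^\top\bH\bS_t}$ with $\norm{\bH^{1/2}\bS_t\bS_t^\top\bH^{1/2}}$, which swaps the order of the factors, and second, the fact that Definition~\ref{def:ske} is applied to a fixed matrix $\bA$ that does not depend on $\bS_t$. Since $\bH$ is a fixed problem parameter, the second point is immediate.
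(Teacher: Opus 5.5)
Your proof is correct and follows the paper's own argument. Both rewrite $\norm{\bS_t^\top\bH\bS_t}$ as $\norm{\bH^{1/2}\bS_t\bS_t^\top\bH^{1/2}}$, apply Definition~\ref{def:ske} with $\bA=\bH^{1/2}$ (so that $\norm{\bA}^2=\norm{\bH}$ and $\norm{\bA}_F^2=\tr(\bH)$), and conclude by the triangle inequality. You additionally spell out the identity $\norm{\bm{M}^\top\bm{M}}=\norm{\bm{M}\bm{M}^\top}$ behind the rewriting step, which the paper leaves implicit.
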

\begin{proof}
	First, we have
	\begin{align*}
		\norm{ \bS^\top \bH \bS } 
		=& 
		\norm{ \bH^{1/2} \bS\bS^\top\bH^{1/2}}.
	\end{align*}	
	By the property that  $\bS$ is an oblivious $({1}/{4}, k, \delta)$-random sketching matrix, then we can obtain that
	\begin{align*}
		\norm{ \bH^{1/2} \bS\bS^\top \bH^{1/2} -  \bH} \leq \frac{1}{4} \left( \norm{\bH}_2 + \frac{\tr( \bH )}{k} \right),
	\end{align*}
	which implies that
	\begin{align*}
		\norm{ \bH^{1/2} \bS\bS^\top \bH^{1/2} } 
		\leq& 
		\frac{5}{4} \norm{\bH }_2 + \frac{\tr( \bH )}{4k}.
	\end{align*}
\end{proof}

\begin{lemma}\label{lem:E4}
	If $\bS\in\RR^{d\times \ell}$ is random sketching matrix which is Gaussian, Rademacher, SRHT, or Sparse Embedding (refer to Sec.~\ref{subsec:Sketch}), then event $\cE_{t,4}$ holds  with  a probability at least $1 - \delta'$ with $\delta' = \exp\left(-\cO(\ell)\right)$.
\end{lemma}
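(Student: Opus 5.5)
The plan is to rewrite the quantity in $\cE_{t,4}$ as the square of a single linear form in $\bS_t$. Let $\bm{u} = \nabla f(\x_t)$ and $\bm{1}_\ell\in\RR^\ell$ be the all-ones vector. Then
\begin{equation*}
\frac{1}{\ell}\Big(\sum_{i=1}^\ell \dotprod{\bm{u},\si}\Big)^2 = \frac{1}{\ell}\big(\bm{u}^\top \bS_t \bm{1}_\ell\big)^2 = \big(\bm{u}^\top \bS_t \bm{w}\big)^2, \qquad \bm{w} := \bm{1}_\ell/\sqrt{\ell},
\end{equation*}
where $\norm{\bm{w}}=1$. The event $\cE_{t,4}$ is therefore equivalent to $|\bm{u}^\top \bS_t \bm{w}| \le \norm{\bm{u}}/2$. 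This is a bilinear form of a fixed unit direction on each side of $\bS_t$. Note that $\bS_t$ is independent of $\x_t$ given the past, so we may treat $\bm{u}$ as fixed.

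\textbf{Gaussian case.} Here $\bm{u}^\top \bS_t \bm{w} = \sum_{j,i} u_j S_{ji} w_i$ is exactly Gaussian with mean $0$ and variance $\sum_{j,i} u_j^2 w_i^2/\ell = \norm{\bm{u}}^2/\ell$. The Gaussian tail bound gives
\begin{equation*}
\PP\big(|\bm{u}^\top \bS_t \bm{w}| > \norm{\bm{u}}/2\big) \le 2\exp(-\ell/8) = \exp(-\cO(\ell)).
\end{equation*}

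\textbf{Rademacher case.} The same linear form is a sum of independent bounded symmetric terms $u_j w_i \epsilon_{ji}/\sqrt{\ell}$. Their squared ranges sum to $4\norm{\bm{u}}^2/\ell$, so Hoeffding's inequality yields the same $2\exp(-c\ell)$ bound.

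\textbf{SRHT and sparse embedding.} This is the main obstacle: the entries are no longer independent, and the variance of $\bm{u}^\top\bS\bm{w}$ must still be shown to be $\cO(\norm{\bm{u}}^2/\ell)$ with sub-Gaussian tails. For the sparse embedding, I would first condition on the hashing pattern. Each column $\si$ has random signs independent across columns. Writing $\dotprod{\bm{u},\si} = \sum_{r} \epsilon_r u_r/\sqrt{s}$ over the positions hashed to bucket $i$, the whole sum becomes a Rademacher sum $\sum_r \epsilon_r u_r/\sqrt{s\ell}$ in independent signs. Hoeffding then applies conditionally with variance proxy $\norm{\bm{u}}^2/\ell$, and the conditioning is harmless because the bound is uniform over hashings.

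For SRHT, $\bS^\top\bm{u} = \bP\bH_d\bD\bm{u}$ (appropriately scaled). The Rademacher diagonal $\bD$ flattens $\bm{u}$, and $\bm{1}_\ell^\top \bP\bH_d\bD\bm{u}$ is a Rademacher combination $\sum_r \epsilon_r u_r c_r$ with $c_r$ the sum of the sampled Hadamard entries in column $r$. I would bound $\sum_r u_r^2 c_r^2$ by first using concentration over the uniform sampling $\bP$, noting $c_r$ is a sum of $\ell$ random signs so that $|c_r| \lesssim \sqrt{\ell\log(d/\delta)}$. I would then apply Hoeffding over $\bD$. This step may only give $\exp(-\cO(\ell/\log d))$, and I would accept the logarithmic loss consistent with the remark that SRHT needs extra log factors.

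Finally, a union over the bad events yields $\delta' = \exp(-\cO(\ell))$.
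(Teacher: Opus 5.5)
\paragraph{Gap: the SRHT case.} This step fails, and it cannot be repaired, because the statement is false for SRHT. Your step ``$c_r$ is a sum of $\ell$ random signs'' breaks down for the first column of the (Sylvester) Walsh--Hadamard matrix, which is identically $+1$. There $c_1=\ell$ deterministically, so $\sum_r u_r^2c_r^2\ge \ell^2u_1^2$. This is of the order of the squared threshold $\ell^2\norm{\bm{u}}^2/4$ once $|u_1|$ is a constant fraction of $\norm{\bm{u}}$, and Hoeffding then gives nothing.

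This is not an artifact of the proof. With the normalization $\EE[\bS\bS^\top]=\bI_d$, the columns are $\si=\ell^{-1/2}\bD\bH_d^\top\bm{e}_{p_i}$, where $p_i$ is the $i$-th sampled index. Hence $\dotprod{\bm{e}_1,\si}=\ell^{-1/2}D_{11}$, with the same sign for every $i$. Taking $\nabla f(\x_t)=\bm{e}_1$ gives $\frac{1}{\ell}\big(\sum_{i=1}^{\ell}\dotprod{\nabla f(\x_t),\si}\big)^2=1>\frac14$ for every realization of $\bP$ and $\bD$, so $\cE_{t,4}$ fails with probability one.

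The paper's proof has the same hole. It asserts that $\bS\bm{1}_\ell$ has i.i.d.\ mean-zero sub-Gaussian entries, but for SRHT $(\bS\bm{1}_\ell)_1=\pm\sqrt{\ell}$. Note also that settling for $\exp(-\cO(\ell/\log d))$ would not deliver the claimed $\exp(-\cO(\ell))$ rate in any case.

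\paragraph{The other three cases.} Your Gaussian, Rademacher and sparse-embedding cases are correct. They are a more elementary form of the paper's argument. The paper applies Hanson--Wright to the rank-one form $(\bS\bm{1}_\ell)^\top\nabla f(\x)\nabla f(\x)^\top(\bS\bm{1}_\ell)$. For a rank-one matrix this is just the sub-Gaussian tail of the linear form $\dotprod{\nabla f(\x),\bS\bm{1}_\ell}$, which you bound directly with the explicit rate $2e^{-\ell/8}$.

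In the sparse case the sum is really $(s\ell)^{-1/2}\sum_r u_r\sum_{k=1}^{s}\epsilon_{r,k}$, since each coordinate carries $s$ independent signs. That is why the proxy is exactly $\norm{\bm{u}}^2/\ell$ for every hashing.

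If you want one argument covering all valid cases, use column-sign symmetry. For these three sketches, $\bS\bD'$ has the same law as $\bS$ when $\bD'\in\RR^{\ell\times\ell}$ is an independent Rademacher diagonal. So, conditionally on $\bS$, the quantity $\sum_i\epsilon_i'\dotprod{\bm{u},\si}$ is a Rademacher sum with proxy $\norm{\bS^\top\bm{u}}^2\le\frac32\norm{\bm{u}}^2$ on $\cE_{t,2}$. This gives failure probability at most $2e^{-\ell/12}+\delta$. SRHT lacks this symmetry and would be covered only after adding such a random sign to each column.
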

\begin{proof}
Refer to Lemma~\ref{lem:ss}.
\end{proof}

\subsection{Properties of Estimated Gradient and Standard Deviation}

First, we will provide several important properties related to the estimated gradient defined in Eq.~\eqref{eq:g}.
\begin{lemma}
	Under Assumptions \ref{ass:LL} and \ref{ass:lower-bounded}, 
	we additionally suppose the gradient $\g(\x)$ defined in Eq.~\eqref{eq:g} has the form of 
	\begin{equation}\label{eq:g_prop}
		\g(\x) = \bS\bS^\top \nabla f(\x) + \frac{\alpha}{4}\bS\bv \quad\mbox{ and }\quad |v^{(i)}| \le  2\left(L_0 + L_1 \norm{\nabla f(\x)}\right) \norm{\si}_{\bH}^2, 
	\end{equation} 
    where $\bS=[\s_1,\dots,\s_\ell]\in\RR^{d \times \ell}$ is a given matrix and $\bv=[v^{(i)},\dots,v^{(\ell)}]\in\BR^{\ell}$ is an $\ell$-dimension vector with $v^{(i)} = 4\alpha^{-1}\gamma(\x, \alpha \si)$ 
    and 
	\begin{equation}\label{eq:gamma}
		\gamma(\x, \alpha \si) = f(\x+\alpha \si) - f(\x) - \alpha \dotprod{\nabla f(\x), \si}.
	\end{equation}
\end{lemma}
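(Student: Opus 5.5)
The decomposition is an algebraic identity, and the bound on $v^{(i)}$ follows from Assumption~\ref{ass:LL} once the step $\alpha\si$ is admissible. I would prove the two parts in that order.

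\textbf{Decomposition.} By the definition \eqref{eq:gamma},
\[
\frac{f(\x+\alpha\si)-f(\x)}{\alpha} = \dotprod{\nabla f(\x),\si} + \frac{\gamma(\x,\alpha\si)}{\alpha}.
\]
I would substitute this into \eqref{eq:g} and sum over $i$. The first terms give
\[
\sum_{i=1}^\ell \si\si^\top\nabla f(\x) = \bS\bS^\top\nabla f(\x).
\]
The remaining terms give
\[
\sum_{i=1}^\ell \alpha^{-1}\gamma(\x,\alpha\si)\,\si = \frac{\alpha}{4}\sum_{i=1}^\ell v^{(i)}\si = \frac{\alpha}{4}\bS\bv,
\]
where the middle equality holds because $v^{(i)}=4\alpha^{-1}\gamma(\x,\alpha\si)$ makes $\frac{\alpha}{4}v^{(i)}=\alpha^{-1}\gamma(\x,\alpha\si)$. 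Since the columns of $\bS$ are the $\si$, this yields the first identity in \eqref{eq:g_prop}.

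\textbf{Bound on $v^{(i)}$.} I would apply Assumption~\ref{ass:LL} with $\y=\x+\alpha\si$. This requires $\alpha\norm{\si}_{\bH}\le 1/(L_1\sqrt{\norm{\bH}})$, which should hold under the (implicit) small-$\alpha$ choice. For instance, on $\cE_{t,1}$ we have $\norm{\si}_{\bH}^2\le\sqrt{\zeta_1}$, so it suffices to take $\alpha$ small enough relative to $\zeta_1$. With the step admissible, \eqref{eq:LL} gives
\[
|\gamma(\x,\alpha\si)| \le \frac{L_0+L_1\norm{\nabla f(\x)}}{2}\,\alpha^2\norm{\si}_{\bH}^2.
\]
Multiplying by $4\alpha^{-1}$ gives
\[
|v^{(i)}| \le 2\alpha\left(L_0+L_1\norm{\nabla f(\x)}\right)\norm{\si}_{\bH}^2.
\]
This is the stated bound times an extra factor $\alpha$. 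The stated bound, without that factor, then follows directly when $\alpha\le 1$.

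\textbf{Main obstacle.} The algebra is immediate; the difficulty is that the normalization is inconsistent. With $v^{(i)}=4\alpha^{-1}\gamma$ and the coefficient $\alpha/4$, the scaling only matches the claimed bound if $\alpha\le1$. The more natural normalization would be $v^{(i)}=4\alpha^{-2}\gamma$, which gives $|v^{(i)}|\le 2(L_0+L_1\norm{\nabla f(\x)})\norm{\si}_{\bH}^2$ exactly, with the error term $\frac{\alpha}{4}\bS\bv$.

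I would therefore state the result in the form actually needed downstream. That is, $\g(\x)=\bS\bS^\top\nabla f(\x)+\frac{\alpha}{4}\bS\bv$ with $|v^{(i)}|\le 2(L_0+L_1\norm{\nabla f(\x)})\norm{\si}_{\bH}^2$, assuming $\alpha\le1$ and the admissibility condition $\alpha\norm{\si}_{\bH}\le 1/(L_1\sqrt{\norm{\bH}})$ for every $i$.
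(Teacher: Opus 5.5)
\textbf{Gap in the decomposition step.} Your route is the paper's: substitute the definition of $\gamma$, then apply \eqref{eq:LL} with $\y=\x+\alpha\si$. However, the decomposition step has an arithmetic slip that breaks it. With $v^{(i)}=4\alpha^{-1}\gamma(\x,\alpha\si)$ one has $\frac{\alpha}{4}v^{(i)}=\gamma(\x,\alpha\si)$, not $\alpha^{-1}\gamma(\x,\alpha\si)$. Hence $\frac{\alpha}{4}\bS\bv=\sum_{i}\gamma(\x,\alpha\si)\si$, whereas the residual $\g(\x)-\bS\bS^\top\nabla f(\x)$ equals $\alpha^{-1}\sum_i\gamma(\x,\alpha\si)\si$. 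So the identity in \eqref{eq:g_prop} is off by a factor $\alpha$ and in general holds only when $\alpha=1$. The inconsistency you diagnose is therefore not confined to the bound. Your repair ``assume $\alpha\le 1$'' does not rescue the identity; it only weakens your (correct) bound $|v^{(i)}|\le 2\alpha(L_0+L_1\norm{\nabla f(\x)})\norm{\si}_{\bH}^2$ to the stated one.

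\textbf{The consistent fix.} The only consistent reading is the one you mention in passing, $v^{(i)}=4\alpha^{-2}\gamma(\x,\alpha\si)$. Then $\frac{\alpha}{4}v^{(i)}=\alpha^{-1}\gamma(\x,\alpha\si)$ gives the identity exactly. Also \eqref{eq:LL} gives $|v^{(i)}|\le 4\alpha^{-2}\cdot\frac{L_0+L_1\norm{\nabla f(\x)}}{2}\alpha^2\norm{\si}_{\bH}^2=2(L_0+L_1\norm{\nabla f(\x)})\norm{\si}_{\bH}^2$, with no condition on $\alpha$ beyond admissibility. This is also the scaling the later lemmas actually rely on: error term $\frac{\alpha}{4}\bS\bv$ together with $\norm{\bv}^2\le 4(L_0+L_1\norm{\nabla f(\x)})^2\zeta_1$. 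You should adopt it as the definition rather than as an optional variant, and drop $\alpha\le 1$.

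\textbf{Comparison with the paper.} The paper's own proof makes the same slip in the identity. It also drops the factor $\alpha$ in the bound, evaluating $\frac{4}{\alpha}\cdot\frac{\alpha^2}{2}$ as $2$ rather than $2\alpha$; you caught that one. Your admissibility remark is correct and is something the paper omits. On $\cE_{t,1}$ one has $\norm{\si}_{\bH}\le\zeta_1^{1/4}$, and $\zeta_1\ge\norm{\bH}^2$. So the later condition $\alpha^2\le 1/(2L_1^2\zeta_1)$ gives $\alpha\norm{\si}_{\bH}\le 1/(\sqrt{2}L_1\sqrt{\norm{\bH}})$, as required by Assumption~\ref{ass:LL}.
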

\begin{proof}
We have
\begin{align*}
	\sum_{i=1}^{\ell}\frac{f(\x+\alpha \si) - f(\x)}{\alpha}\si 
	\stackrel{\eqref{eq:gamma}}{=}& 
	\sum_{i=1}^{\ell} \frac{\alpha \dotprod{\nabla f(\x), \si} + \gamma(\x, \alpha \si) }{\alpha} \si\\
	=&
	\sum_{i=1}^{\ell} \dotprod{\nabla f(\x), \si} \si 
	+  \alpha^{-1} \sum_{i=1}^{\ell} \gamma(\x, \alpha \si) \cdot \si \\
	=& \bS\bS^\top \nabla f(\x) +  \frac{\alpha}{4}\bS \bv,
\end{align*}
where $v^{(i)} = 4\alpha^{-1} \cdot \gamma(\x, \alpha \si) $.

Furthermore, by Eq.~\eqref{eq:LL}, we have
\begin{align*}
\left|
4 \alpha^{-1} \cdot \gamma(\x, \alpha \si)
\right|
\stackrel{\eqref{eq:LL}}{\leq}
\frac{4}{\alpha} \cdot \frac{L_0 + L_1 \norm{\nabla f(\x)}}{2} \cdot \alpha^2 \norm{\si}_{\bH}^2
=
2\left(L_0 + L_1 \norm{\nabla f(\x)}\right) \norm{\si}_{\bH}^2.
\end{align*}

\end{proof}

\begin{lemma}
Let $\bv=[v^{(i)},\dots,v^{(\ell)}]\in\BR^{\ell}$ be an $\ell$-dimension vector with $v^{(i)} = 4\alpha^{-1} \cdot \gamma(\x, \alpha \si)$, where $\gamma(\x, \alpha \si)$ following the definition in Eq.~\eqref{eq:gamma}. 
Conditioned on event $\cE_{t,1}$, the quantity $\norm{\bv}^2$ is upper bounded by	
	\begin{equation}\label{eq:v_u}
		\norm{\bv}^2
		\leq
		4(L_0 + L_1 \norm{\nabla f(\x)})^2 \zeta_1,
	\end{equation}
where $\zeta_1$ is defined in Eq.~\eqref{eq:zeta_1}.
\end{lemma}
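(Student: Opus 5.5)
The proof will be a direct entrywise bound followed by summation, reusing the per-coordinate estimate from the preceding lemma (Eq.~\eqref{eq:g_prop}). That lemma, which follows from Assumption~\ref{ass:LL}, gives for each $i$
\begin{equation*}
|v^{(i)}| \le 2\left(L_0 + L_1\norm{\nabla f(\x)}\right)\norm{\si}_{\bH}^2 .
\end{equation*}
Note that $\norm{\si}_{\bH}^2 = \si^\top\bH\si$.

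The one point to check before using this estimate is that Assumption~\ref{ass:LL} applies, i.e., that $\norm{\alpha\si}_{\bH}\le 1/(L_1\sqrt{\norm{\bH}})$. The preceding lemma takes this for granted, so I would state it as the standing requirement on $\alpha$. On $\cE_{t,1}$ each $|\si^\top\bH\si|\le\sqrt{\zeta_1}$, so a sufficiently small $\alpha$ makes it hold for every $i$.

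Next I square the estimate and sum over $i=1,\dots,\ell$:
\begin{equation*}
\norm{\bv}^2=\sum_{i=1}^{\ell}|v^{(i)}|^2 \le 4\left(L_0+L_1\norm{\nabla f(\x)}\right)^2\sum_{i=1}^{\ell}|\si^\top\bH\si|^2 .
\end{equation*}
The common factor $4(L_0+L_1\norm{\nabla f(\x)})^2$ does not depend on $i$, so it comes out of the sum.

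Finally, conditioning on $\cE_{t,1}$ from Eq.~\eqref{eq:E1} bounds the remaining sum by $\zeta_1$, which yields Eq.~\eqref{eq:v_u}. There is no real obstacle here. The only subtlety is the validity of the local smoothness inequality, which is the radius condition on $\alpha\si$ handled in the first step.
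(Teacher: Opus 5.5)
Your proposal is correct and follows the paper's proof: square the entrywise bound from Eq.~\eqref{eq:g_prop}, pull out the common factor $4(L_0+L_1\norm{\nabla f(\x)})^2$, and bound $\sum_i \norm{\si}_{\bH}^4=\sum_i|\si^\top\bH\si|^2$ by $\zeta_1$ on $\cE_{t,1}$. Your extra check of the radius condition $\norm{\alpha\si}_{\bH}\le 1/(L_1\sqrt{\norm{\bH}})$ is left implicit in the paper. It does hold on $\cE_{t,1}$ under the later requirement $\alpha^2\le 1/(2L_1^2\zeta_1)$, because $\sqrt{\zeta_1}\ge \tfrac{5}{4}\norm{\bH}$.
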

\begin{proof}
By the definition of $\bv$, we can obtain that
	\begin{align*}
		\norm{\bv}^2 
		=
		\sum_{i=1}^{\ell} \left(v^{(i)}\right)^2
		\stackrel{\eqref{eq:g_prop}}{\leq}
		4(L_0 + L_1 \norm{\nabla f(\x)})^2  \sum_{i=1}^{\ell} \norm{\si}_{\bH}^4
		\stackrel{\eqref{eq:E1}}{\leq}
		4(L_0 + L_1 \norm{\nabla f(\x)})^2 \zeta_1,
	\end{align*}
	which concludes the proof.
\end{proof}

\begin{lemma}\label{lem:gamma}
	Letting $\gamma(\x, \alpha \si)$  defined in Eq.~\eqref{eq:gamma}, conditioned on event $\cE_{t,1}$ defined in Eq.~\eqref{eq:E1}, then it holds that
	\begin{equation}\label{eq:gamma_up}
		\sum_{i=1}^{\ell} \gamma(\x, \alpha \si)^2 
		\leq 
		\frac{\alpha^4(L_0^2 + L_1^2 \norm{\nabla f(\x)}^2)}{4} \cdot \zeta_1.
	\end{equation}
\end{lemma}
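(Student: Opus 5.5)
The plan is to bound each $\gamma(\x,\alpha\si)$ using Assumption~\ref{ass:LL} at the pair $(\x,\x+\alpha\si)$, square the bounds, sum over $i$, and then close the sum with event $\cE_{t,1}$. The point needing care is that the target has $L_0^2+L_1^2\norm{\nabla f(\x)}^2$ where a direct computation produces $(L_0+L_1\norm{\nabla f(\x)})^2$.

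\textbf{Step 1 (pointwise bound).} For $\alpha$ small enough that $\alpha\norm{\si}_{\bH}\le 1/(L_1\sqrt{\norm{\bH}})$, Eq.~\eqref{eq:LL} with $\y=\x+\alpha\si$ gives
\begin{equation*}
|\gamma(\x,\alpha\si)|\le \frac{L_0+L_1\norm{\nabla f(\x)}}{2}\,\alpha^2\norm{\si}_{\bH}^2 .
\end{equation*}
This is the same estimate that produced $|v^{(i)}|\le 2(L_0+L_1\norm{\nabla f(\x)})\norm{\si}_{\bH}^2$ in Eq.~\eqref{eq:g_prop}, since $\gamma=\frac{\alpha}{4}v^{(i)}$.

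\textbf{Step 2 (sum and use $\cE_{t,1}$).} Squaring and summing gives
\begin{equation*}
\sum_{i=1}^\ell\gamma(\x,\alpha\si)^2\le \frac{\alpha^4(L_0+L_1\norm{\nabla f(\x)})^2}{4}\sum_{i=1}^\ell\norm{\si}_{\bH}^4 .
\end{equation*}
By the definition of $\cE_{t,1}$ in Eq.~\eqref{eq:E1}, $\sum_i\norm{\si}_{\bH}^4=\sum_i|\si^\top\bH\si|^2\le\zeta_1$.

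\textbf{Step 3 (the cross term; main obstacle).} We have $(L_0+L_1 g)^2=L_0^2+L_1^2g^2+2L_0L_1g$ with $g=\norm{\nabla f(\x)}$. The cross term cannot be discarded for free: Young's inequality only yields $(L_0+L_1g)^2\le 2(L_0^2+L_1^2g^2)$, which loses a factor $2$ against the claimed constant $\tfrac14$.

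My first attempt to recover that factor would use slack in $\zeta_1$ rather than in the smoothness constant. The proof of Lemma~\ref{lem:E1} bounds $\sum_i\norm{\si}^4$ by $\norm{\bS_t}_F^4$. Instead I would write
\begin{equation*}
\sum_i\norm{\si}^4\le \max_i\norm{\si}^2\cdot\norm{\bS_t}_F^2 .
\end{equation*}
I would then try to show, from Lemma~\ref{lem:bound_S_F_norm} and the sketching property, that $\max_i\norm{\si}^2\le\tfrac12\norm{\bS_t}_F^2$. This should be plausible when $\ell\ge2$ and no single column dominates, and it would give $\sum_i\norm{\si}_{\bH}^4\le\zeta_1/2$. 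Combined with $(L_0+L_1g)^2\le2(L_0^2+L_1^2g^2)$, that yields exactly the stated inequality.

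I expect this column-balance step to be the hard part. The relation $\gamma=\frac{\alpha}{4}v^{(i)}$ together with Eq.~\eqref{eq:v_u} gives only the $(L_0+L_1g)^2$ form. Moreover, for $\ell=1$ the inequality $\sum_i\norm{\si}^4\le\norm{\bS_t}_F^4$ is tight, so the balance argument must genuinely use $\ell\ge2$ and concentration of the column norms.

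If that fails, the fallback is to state the lemma with $(L_0+L_1\norm{\nabla f(\x)})^2$ in place of $L_0^2+L_1^2\norm{\nabla f(\x)}^2$, or with constant $\tfrac12$ instead of $\tfrac14$. Either version suffices for the later uses, because they only need $\sum_i\gamma^2=\cO(\alpha^4(L_0^2+L_1^2\norm{\nabla f(\x)}^2)\zeta_1)$.
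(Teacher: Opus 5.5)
\textbf{Steps~1--2.} Your Steps~1--2 are exactly the paper's argument. It bounds $|\gamma(\x,\alpha\si)|\le\frac{\alpha^2}{2}(L_0+L_1\norm{\nabla f(\x)})\,|\si^\top\bH\si|$ via Eq.~\eqref{eq:LL}, squares, sums, and invokes $\cE_{t,1}$. Notice, however, that the paper's chain ends at $\frac{\alpha^4}{4}(L_0+L_1\norm{\nabla f(\x)})^2\zeta_1$ and never removes the cross term. So the displayed form with $L_0^2+L_1^2\norm{\nabla f(\x)}^2$ is not what the paper proves. Your diagnosis in Step~3 is correct, and the discrepancy lies in the statement itself.

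\textbf{The gap: the column-balance step.} The lemma's only hypothesis on $\bS_t$ is $\cE_{t,1}$. That event bounds the total $\sum_i|\si^\top\bH\si|^2$ and says nothing about how it is spread across columns. Nor does $\max_i\norm{\si}^2\le\frac12\norm{\bS_t}_F^2$ follow from Eq.~\eqref{eq:mat_pd} or Eq.~\eqref{eq:S_norm}: for $d=1$, the realization $\bS=[1,0,\dots,0]$ satisfies both yet violates it. At best you would prove a different lemma, under an extra distribution-specific event whose failure probability must be added to the union bound.

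More decisively, the inequality as stated is false on $\cE_{t,1}$. Take $d=1$, $\bH=1$, and $f$ with $f'(0)=1$, $0\le f''\le 2$, $f''\equiv 2$ on $[-\tfrac18,\tfrac18]$, and $f''=0$ outside $[-\tfrac14,\tfrac14]$.
\begin{itemize}
\item Then $f'\ge\tfrac12$ everywhere, and the remainder $\int_x^y(y-u)f''(u)\,du$ lies in $[0,(y-x)^2]$. Hence Assumption~\ref{ass:LL} holds with $L_0=L_1=\tfrac43$.
\item At $\x=0$, take $s_1=\zeta_1^{1/4}$, $s_i=0$ for $i\ge 2$ (so $\cE_{t,1}$ holds with equality), and $\alpha s_1\le\tfrac18$.
\item Then $\sum_i\gamma(0,\alpha s_i)^2=\alpha^4\zeta_1$, while the claimed right-hand side is $\frac{\alpha^4}{4}\cdot 2\cdot\frac{16}{9}\,\zeta_1=\frac89\alpha^4\zeta_1$.
\end{itemize}
A small neighbourhood of this $\bS$ has positive Gaussian probability and gives the same violation, so this is not a null-set artifact. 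Hence no argument using only Assumption~\ref{ass:LL} and $\cE_{t,1}$ can reach the constant $\frac14$ in front of $L_0^2+L_1^2\norm{\nabla f(\x)}^2$.

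\textbf{Your fallback.} Your fallback is the right conclusion, and it is precisely what the paper's proof establishes:
$\sum_i\gamma^2\le\frac{\alpha^4}{4}(L_0+L_1\norm{\nabla f(\x)})^2\zeta_1\le\frac{\alpha^4}{2}(L_0^2+L_1^2\norm{\nabla f(\x)}^2)\zeta_1$.
Your Step~1 radius condition, which the paper skips, is automatic under the downstream choice $\alpha^2\le 1/(2L_1^2\zeta_1)$. This is because $\alpha\norm{\si}_{\bH}\le\alpha\zeta_1^{1/4}$ on $\cE_{t,1}$ and $\zeta_1^{1/4}\ge\sqrt{\norm{\bH}}$.

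One caveat on ``either version suffices later'': the downstream lemmas use explicit constants. For example, in the lower bound on $\sigma_t$, the corrected estimate produces a factor $1-4\alpha^2L_1^2\zeta_1$. This factor can be negative under the paper's condition $\alpha^2\le 1/(2L_1^2\zeta_1)$. So those $\alpha$-conditions must shrink by a constant factor; this changes only absolute constants, not the rates.
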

\begin{proof}
	By the definition of $\gamma$ in Eq.~\eqref{eq:gamma} and  the assumption in Eq.~\eqref{eq:LL}, we can obtain that
	\begin{align*}
		|\gamma(\x, \alpha \si) | \leq \alpha^2 \cdot \frac{L_0 + L_1 \norm{\nabla f(\x)}}{2} |\si^\top \bH \si|.
	\end{align*}
	Therefore, we can obtain that
	\begin{align*}
		\sum_{i=1}^{\ell} \gamma(\x, \alpha \si)^2 
		\leq
		\alpha^4 \cdot \frac{(L_0 + L_1 \norm{\nabla f(\x)})^2}{4} \cdot \sum_{i=1}^{\ell}  |\si^\top \bH \si|^2
		\stackrel{\eqref{eq:E1}}{\leq}
		\frac{\alpha^4(L_0 + L_1 \norm{\nabla f(\x)})^2  }{4} \cdot \zeta_1.
	\end{align*}
\end{proof}

Next, we will prove properties of the standard deviation $\sigma$ defined in Eq.~\eqref{eq:sig}.
\begin{lemma}\label{lem:sig}
Letting the standard deviation $\sigma$ defined in Eq.~\eqref{eq:sig}, then it holds
\begin{equation}\label{eq:sig_up0}
\sigma^2 
\leq 
\frac{3\alpha^2}{2(\ell-1)} \sum_{i=1}^{\ell} \dotprod{ \nabla f(\x), \bm{s}_i }^2 
+ \frac{3}{\ell - 1} 	\sum_{i=1}^{\ell}  \gamma(\x, \alpha \bm{s}_j)^2
\end{equation}
and
\begin{equation}\label{eq:sig_low0}
\sigma^2 
\geq 
\frac{\alpha^2}{2(\ell-1)} \sum_{i=1}^{\ell} \left(\dotprod{ \nabla f(\x), \bm{s}_i }^2  - \frac{1}{\ell^2}  \left( \sum_{i=1}^{\ell} \dotprod{\nabla f(\x), \si} \right)^2 \right) - \frac{1}{\ell - 1} 	\sum_{i=1}^{\ell}  \gamma(\x, \alpha \bm{s}_j)^2,
\end{equation}
where $\gamma(\x, \alpha \si)$ is defined in Eq.~\eqref{eq:gamma}.
\end{lemma}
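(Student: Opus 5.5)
Expand each sample value with the decomposition in Eq.~\eqref{eq:gamma}. Write $a_j := \alpha\dotprod{\nabla f(\x), \bm{s}_j}$ and $\gamma_j := \gamma(\x,\alpha \bm{s}_j)$, so that $f(\x+\alpha\bm{s}_j) = f(\x) + a_j + \gamma_j$. The constant $f(\x)$ cancels against the mean. Setting $\bar a = \frac1\ell\sum_i a_i$ and $\bar\gamma = \frac1\ell\sum_i\gamma_i$, we get
\begin{equation*}
(\ell-1)\sigma^2 = \sum_{j=1}^{\ell} \big( (a_j-\bar a) + (\gamma_j - \bar\gamma)\big)^2 .
\end{equation*}
Everything then reduces to elementary inequalities for a sum of squares of two centered vectors. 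We also use that centering is an orthogonal projection, so $\sum_j (a_j-\bar a)^2 = \sum_j a_j^2 - \ell\bar a^2 \le \sum_j a_j^2$, and likewise for $\gamma$.

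\emph{Upper bound.} Apply $(x+y)^2 \le c x^2 + \frac{c}{c-1} y^2$ with a suitable $c$. The stated constants $3/2$ and $3$ correspond to $c=3/2$, which gives $\frac{c}{c-1}=3$:
\begin{equation*}
(\ell-1)\sigma^2 \le \tfrac32\sum_j (a_j-\bar a)^2 + 3\sum_j(\gamma_j-\bar\gamma)^2 \le \tfrac32\sum_j a_j^2 + 3\sum_j\gamma_j^2 .
\end{equation*}
Since $a_j^2 = \alpha^2\dotprod{\nabla f(\x),\bm{s}_j}^2$, dividing by $\ell-1$ yields Eq.~\eqref{eq:sig_up0}.

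\emph{Lower bound.} Use the reverse inequality $(x+y)^2 \ge \frac12 x^2 - y^2$, which follows from $x^2 \le 2(x+y)^2 + 2y^2$. Summing over $j$ gives
\begin{equation*}
(\ell-1)\sigma^2 \ge \tfrac12\sum_j (a_j-\bar a)^2 - \sum_j (\gamma_j-\bar\gamma)^2 \ge \tfrac12\Big(\sum_j a_j^2 - \ell\bar a^2\Big) - \sum_j \gamma_j^2 .
\end{equation*}
Now $\ell \bar a^2 = \frac{\alpha^2}{\ell}\big(\sum_i \dotprod{\nabla f(\x),\bm{s}_i}\big)^2$. Writing this as a sum over $i$ of identical terms $\frac{\alpha^2}{\ell^2}\big(\sum_i\dotprod{\nabla f(\x),\bm{s}_i}\big)^2$ reproduces the bracketed expression in Eq.~\eqref{eq:sig_low0}. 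Dividing by $\ell-1$ finishes the proof.

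None of these steps is genuinely hard. The only points needing care are the bookkeeping of the mean term, so that the centered $a$-sum appears exactly in the form stated, and choosing the Young constants to match $3/2$, $3$, $1/2$ and $1$. In particular, for the $\gamma$ parts we should drop the centering by using $\sum_j(\gamma_j-\bar\gamma)^2\le\sum_j\gamma_j^2$, rather than expanding it.
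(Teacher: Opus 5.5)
Your proof is correct and follows the paper's argument essentially step for step. The paper also writes each centered sample as $\alpha\dotprod{\nabla f(\x), \bm{s}_j - \frac{1}{\ell}\sum_i \si} + \tau_j$ with $\tau_j = \gamma_j - \bar\gamma$. It bounds the cross term by $2|xy| \le \frac{1}{2}x^2 + 2y^2$, which gives exactly your $\frac{3}{2}x^2 + 3y^2$ and $\frac{1}{2}x^2 - y^2$. It then evaluates the centered sum as $\sum_i \dotprod{\nabla f(\x), \si}^2 - \frac{1}{\ell}\big(\sum_i \dotprod{\nabla f(\x), \si}\big)^2$ and drops the $\gamma$-centering via $\sum_j \tau_j^2 \le \sum_j \gamma_j^2$.
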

\begin{proof}
By the definition of $\gamma(\cdot, \cdot)$ in Eq.~\eqref{eq:gamma}, we have 
\begin{align*}
	&f(\x + \alpha \bm{s}_j) - \frac{1}{\ell} \sum_{i=1}^{\ell} f(\x + \alpha \si) \\
	=&
	f(\x) + \alpha \dotprod{\nabla f(\x), \bm{s}_j} + \gamma(\x, \alpha \bm{s}_j) - \frac{1}{\ell} \sum_{i=1}^{\ell}\left( f(\x) + \alpha \dotprod{\nabla f(\x), \si} + \gamma(\x, \alpha \si) \right)\\
	=&
	\alpha \dotprod{\nabla f(\x), \bm{s}_j - \frac{1}{\ell} \sum_{i=1}^{\ell}\si} + \gamma(\x, \alpha \bm{s}_j) - \frac{1}{\ell}\sum_{i=1}^{\ell} \gamma(\x, \alpha \si).
\end{align*}
Defining $\tau_j = \gamma(\x, \alpha \bm{s}_j) - \frac{1}{\ell}\sum_{i=1}^{\ell} \gamma(\x, \alpha \si)$, we can obtain that
\begin{equation*}
\begin{aligned}
	&\sum_{j=1}^{\ell} \left( f(\x + \alpha \bm{s}_j) - \frac{1}{\ell} \sum_{i=1}^{\ell} f(\x + \alpha \si) \right)^2\\
	=&
	\sum_{j=1}^{\ell} \left( \alpha \dotprod{ \nabla f(\x), \bm{s}_j - \frac{1}{\ell} \sum_{i=1}^{\ell}\si } + \tau_j \right)^2\\
	=&
	\sum_{j=1}^{\ell} \left( \alpha^2 \dotprod{ \nabla f(\x), \bm{s}_j - \frac{1}{\ell} \sum_{i=1}^{\ell}\si }^2 + \tau_j^2 + 2\alpha \dotprod{ \nabla f(\x), \bm{s}_j - \frac{1}{\ell} \sum_{i=1}^{\ell}\si } \tau_j \right).
\end{aligned}
\end{equation*}

Furthermore,
\begin{align*}
2\alpha \left| \dotprod{ \nabla f(\x), \bm{s}_j - \frac{1}{\ell} \sum_{i=1}^{\ell}\si } \tau_j \right| 
\leq
\frac{\alpha^2}{2}   \dotprod{ \nabla f(\x), \bm{s}_j - \frac{1}{\ell} \sum_{i=1}^{\ell}\si }^2 + 2 \tau_j^2.
\end{align*}
Therefore, it holds that
\begin{equation}\label{eq:ll}
	\sum_{j=1}^{\ell} \left( f(\x + \alpha \bm{s}_j) - \frac{1}{\ell} \sum_{i=1}^{\ell} f(\x + \alpha \si) \right)^2 
	\leq \sum_{j=1}^{\ell} \left( \frac{3\alpha^2}{2} \dotprod{ \nabla f(\x), \bm{s}_j - \frac{1}{\ell} \sum_{i=1}^{\ell}\si }^2 + 3\tau_j^2\right),
\end{equation}
and
\begin{equation}\label{eq:gg}
\sum_{j=1}^{\ell} \left( \frac{\alpha^2}{2} \dotprod{ \nabla f(\x), \bm{s}_j - \frac{1}{\ell} \sum_{i=1}^{\ell}\si }^2 -\tau_j^2\right) \leq	\sum_{j=1}^{\ell} \left( f(\x + \alpha \bm{s}_j) - \frac{1}{\ell} \sum_{i=1}^{\ell} f(\x + \alpha \si) \right)^2.
\end{equation}

Next, we will bound the value of $\dotprod{ \nabla f(\x), \bm{s}_j - \frac{1}{\ell} \sum_{i=1}^{\ell}\si }^2$ and $\tau_j^2$ as follows
\begin{equation}\label{eq:exp}
\begin{aligned}
	&\sum_{j=1}^{\ell} \dotprod{ \nabla f(\x), \bm{s}_j - \frac{1}{\ell} \sum_{i=1}^{\ell}\si }^2 \\
	=& 
	\sum_{j=1}^{\ell} \left( \dotprod{\nabla f(\x), \bm{s}_j}^2 -2 \dotprod{\nabla f(\x), \bm{s}_j} \cdot \left(\frac{1}{\ell}\sum_{i=1}^{\ell} \dotprod{\nabla f(\x), \si}\right) + \left( \frac{1}{\ell}\sum_{i=1}^{\ell} \dotprod{\nabla f(\x), \si} \right)^2 \right)\\
	=&
	\sum_{i=1}^{\ell} \dotprod{\nabla f(\x), \si}^2 - \frac{2}{\ell} \left( \sum_{i=1}^{\ell} \dotprod{\nabla f(\x), \si} \right)^2 + \frac{1}{\ell} \left( \sum_{i=1}^{\ell} \dotprod{\nabla f(\x), \si} \right)^2\\
	=&
	\sum_{i=1}^{\ell} \dotprod{\nabla f(\x), \si}^2 - \frac{1}{\ell}  \left( \sum_{i=1}^{\ell} \dotprod{\nabla f(\x), \si} \right)^2.
\end{aligned}
\end{equation}

Furthermore, it holds that
\begin{equation}\label{eq:tau}
	0 \leq \sum_{j=1}^{\ell} \tau_j^2 = \sum_{j = 1}^{\ell} \left( \gamma(\x, \alpha \bm{s}_j) - \frac{1}{\ell}\sum_{i=1}^{\ell} \gamma(\x, \alpha \si) \right)^2
	\leq 
	\sum_{i=1}^{\ell}  \gamma(\x, \alpha \bm{s}_j)^2.
\end{equation}

Combining above results, we can obtain that
\begin{align*}
	\sigma^2 
	\stackrel{\eqref{eq:sig}}{=}&
	\frac{1}{\ell - 1}\sum_{j=1}^{\ell} \left( f(\x + \alpha \bm{s}_j) - \frac{1}{\ell} \sum_{i=1}^{\ell} f(\x + \alpha \si) \right)^2\\
	\stackrel{\eqref{eq:ll}}{\leq}&
	\frac{1}{\ell-1} \sum_{j=1}^{\ell} \left( \frac{3\alpha^2}{2} \dotprod{ \nabla f(\x), \bm{s}_j - \frac{1}{\ell} \sum_{i=1}^{\ell}\si }^2 + 3\tau_j^2\right)\\
	\stackrel{\eqref{eq:exp}}{=}&
	\frac{3\alpha^2}{2(\ell-1)} \sum_{i=1}^{\ell} \left(\dotprod{ \nabla f(\x), \bm{s}_i }^2  - \frac{1}{\ell^2}  \left( \sum_{i=1}^{\ell} \dotprod{\nabla f(\x), \si} \right)^2 \right) 
	+ \frac{3}{\ell-1} \sum_{i=1}^{\ell} \tau_i^2\\
	\stackrel{\eqref{eq:tau}}{\leq}&
	\frac{3\alpha^2}{2(\ell-1)} \sum_{i=1}^{\ell} \left(\dotprod{ \nabla f(\x), \bm{s}_i }^2  - \frac{1}{\ell^2} \left( \sum_{i=1}^{\ell} \dotprod{\nabla f(\x), \si} \right)^2 \right) 
	+ \frac{3}{\ell - 1} 	\sum_{i=1}^{\ell}  \gamma(\x, \alpha \bm{s}_j)^2\\
	\leq&
	\frac{3\alpha^2}{2(\ell-1)} \sum_{i=1}^{\ell} \dotprod{ \nabla f(\x), \bm{s}_i }^2 
	+ \frac{3}{\ell - 1} 	\sum_{i=1}^{\ell}  \gamma(\x, \alpha \bm{s}_j)^2,
\end{align*}
and
\begin{align*}
	\sigma^2 =&
\frac{1}{\ell - 1}\sum_{j=1}^{\ell} \left( f(\x + \alpha \bm{s}_j) - \frac{1}{\ell} \sum_{i=1}^{\ell} f(\x + \alpha \si) \right)^2\\
\stackrel{\eqref{eq:gg}}{\geq}&
\frac{1}{\ell-1} \sum_{j=1}^{\ell} \left( \frac{\alpha^2}{2} \dotprod{ \nabla f(\x), \bm{s}_j - \frac{1}{\ell} \sum_{i=1}^{\ell}\si }^2 - \tau_j^2\right)\\
\stackrel{\eqref{eq:exp}}{=}&
\frac{\alpha^2}{2(\ell-1)} \sum_{i=1}^{\ell} \left(\dotprod{ \nabla f(\x), \bm{s}_i }^2  - \frac{1}{\ell^2}  \left( \sum_{i=1}^{\ell} \dotprod{\nabla f(\x), \si} \right)^2 \right) 
- \frac{1}{\ell-1} \sum_{i=1}^{\ell} \tau_i^2\\
\stackrel{\eqref{eq:tau}}{\geq}&
\frac{\alpha^2}{2(\ell-1)} \sum_{i=1}^{\ell} \left(\dotprod{ \nabla f(\x), \bm{s}_i }^2  - \frac{1}{\ell^2}  \left( \sum_{i=1}^{\ell} \dotprod{\nabla f(\x), \si} \right)^2 \right) - \frac{1}{\ell - 1} 	\sum_{i=1}^{\ell}  \gamma(\x, \alpha \bm{s}_j)^2.
\end{align*}

\end{proof}

\begin{lemma}
	Assume that the smooth parameter of Algorithm~\ref{alg:SA} satisfies  
    \begin{align*}
    \alpha^2 \leq \min\left\{ \frac{1}{2L_1^2\zeta_1},\; \frac{3\norm{\nabla f(\x_t)}^2}{8L_0^2\zeta_1}\right\}
    \end{align*}
    with $\zeta_1$ defined in Eq.~\eqref{eq:zeta_1}. 
	Conditioned on events $\cE_{t,2}$ and $\cE_{t,4}$ defined in Eq.~\eqref{eq:E2} and Eq.~\eqref{eq:E4} respectively, then  it holds that
	\begin{equation}\label{eq:sig_low}
		\sigma_t \geq \frac{\alpha}{8\sqrt{(\ell-1)}} \norm{\nabla f(\x_t)}.
	\end{equation}
\end{lemma}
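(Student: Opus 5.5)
The plan is to start from the lower bound \eqref{eq:sig_low0} of Lemma~\ref{lem:sig} and control each term using the events. First I handle the quadratic part. By $\cE_{t,2}$,
\[
\sum_{i=1}^{\ell}\dotprod{\nabla f(\x_t),\si}^2=\nabla^\top f(\x_t)\bS_t\bS_t^\top\nabla f(\x_t)\ge \Bigl(\tfrac34-\tfrac1k\Bigr)\norm{\nabla f(\x_t)}^2\ge \tfrac12\norm{\nabla f(\x_t)}^2,
\]
where the last step uses $k\ge 4$. The subtracted mean term in \eqref{eq:sig_low0} equals $\frac{1}{\ell}\bigl(\sum_i\dotprod{\nabla f(\x_t),\si}\bigr)^2$, since $\ell$ copies of $\frac{1}{\ell^2}(\cdot)^2$ are summed. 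By $\cE_{t,4}$ this is at most $\frac14\norm{\nabla f(\x_t)}^2$. So the bracket is at least $\frac14\norm{\nabla f(\x_t)}^2$, and the first term of \eqref{eq:sig_low0} is at least $\frac{\alpha^2}{8(\ell-1)}\norm{\nabla f(\x_t)}^2$.

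Next I bound the remainder $\frac{1}{\ell-1}\sum_i\gamma(\x_t,\alpha\si)^2$. I would use $\cE_{t,1}$, which the statement should implicitly assume or which is available at this point in the analysis. The proof of Lemma~\ref{lem:gamma} actually gives $\frac{\alpha^4(L_0+L_1\norm{\nabla f})^2}{4}\zeta_1$. Since $(a+b)^2\le 2a^2+2b^2$, this is at most $\frac{\alpha^4(L_0^2+L_1^2\norm{\nabla f(\x_t)}^2)}{2}\zeta_1$. Now I apply the step-size conditions. From $\alpha^2\le 1/(2L_1^2\zeta_1)$ I get $\frac{\alpha^4L_1^2\zeta_1}{2}\norm{\nabla f}^2\le \frac{\alpha^2}{4}\norm{\nabla f}^2$. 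From $\alpha^2\le 3\norm{\nabla f}^2/(8L_0^2\zeta_1)$ I get $\frac{\alpha^4L_0^2\zeta_1}{2}\le \frac{3\alpha^2}{16}\norm{\nabla f}^2$.

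The main obstacle is that these constants are too large against $\frac{\alpha^2}{8}$: their total, $\frac{7\alpha^2}{16}\norm{\nabla f}^2$, exceeds the leading term. The fix is to sharpen the bookkeeping. One option is to redo the Young inequality in Lemma~\ref{lem:sig} with a different split, keeping nearly the full $\alpha^2$ coefficient at the cost of a larger multiple of $\tau_j^2$. Another is to use that $\sum_j\tau_j^2\le\sum_j\gamma_j^2$ and bound $\gamma$ directly by $(L_0+L_1\norm{\nabla f})/2$ rather than squared sums. If the constants still do not close, I would present the argument with the constant adjusted, so that the target $\sigma_t^2\ge \frac{\alpha^2}{64(\ell-1)}\norm{\nabla f(\x_t)}^2$ follows from $\frac{\alpha^2}{8}-(\text{remainder})\ge\frac{\alpha^2}{64}$, which requires the remainder to be at most $\frac{7\alpha^2}{64}\norm{\nabla f}^2$. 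Finally, taking square roots yields \eqref{eq:sig_low}.
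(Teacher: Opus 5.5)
Your first half follows the paper's route exactly. $\cE_{t,2}$, $\cE_{t,4}$ and $k\ge 4$ give the leading term $\frac{\alpha^2}{8(\ell-1)}\norm{\nabla f(\x_t)}^2$ in \eqref{eq:sig_low0}, and the remainder is controlled through Lemma~\ref{lem:gamma}. That lemma does require $\cE_{t,1}$, even though the statement omits it. Your constant count is also correct, and it shows that the paper's own proof reaches $\frac{1}{64}$ only through three slips:
\begin{itemize}
\item it invokes Lemma~\ref{lem:gamma} in the form $L_0^2+L_1^2\norm{\nabla f(\x_t)}^2$, whereas that lemma's proof only gives $(L_0+L_1\norm{\nabla f(\x_t)})^2$;
\item it rewrites $\frac{\alpha^4L_1^2\zeta_1}{4}\norm{\nabla f(\x_t)}^2$ as $\frac{\alpha^2}{8}\cdot\alpha^2L_1^2\zeta_1\norm{\nabla f(\x_t)}^2$, losing a factor $2$;
\item its final step needs $\alpha^2L_0^2\zeta_1\le\frac{3}{16}\norm{\nabla f(\x_t)}^2$ rather than the assumed $\frac38$.
\end{itemize}

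Still, as a proof your proposal has a genuine gap: it stops at the obstruction, and none of the suggested repairs removes it.
\begin{itemize}
\item Optimizing the Young split over all weights yields exactly the triangle inequality $\sqrt{\ell-1}\,\sigma_t\ge\alpha\norm{\bm{a}}-\norm{\bm{\tau}}$. Here $a_j=\dotprod{\nabla f(\x_t),\bm{s}_j-\frac1\ell\sum_i\si}$ and $\tau_j$ is as in the proof of Lemma~\ref{lem:sig}. You have $\alpha\norm{\bm a}\ge\frac{\alpha}{2}\norm{\nabla f(\x_t)}$. But $\cE_{t,1}$ and the stated bounds on $\alpha$ only give $\norm{\bm\tau}\le\frac{\alpha}{2}\bigl(\sqrt{3/8}+\sqrt{1/2}\bigr)\norm{\nabla f(\x_t)}\approx 0.66\,\alpha\norm{\nabla f(\x_t)}$, which is larger.
\item Bounding $\gamma$ pointwise before squaring is the same estimate.
\item Shrinking the admissible $\alpha$ proves a different lemma.
\end{itemize}

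The missing ingredient is a better bound on $\sum_i\norm{\si}_{\bH}^4$ than $\zeta_1$. Use $\cE_{t,3}$, which holds wherever the lemma is applied, since Theorem~\ref{thm:main} conditions on all four events. Each $\norm{\si}_{\bH}^2$ is a diagonal entry of $\bS_t^\top\bH\bS_t$, so $\sum_i\norm{\si}_{\bH}^4\le\norm{\bS_t^\top\bH\bS_t}\,\tr(\bS_t^\top\bH\bS_t)\le\ell\zeta_2^2\le\zeta_1/\ell$, using $\tr(\bH)\le d\norm{\bH}$. With this extra factor $1/\ell$, your own bookkeeping bounds the remainder by $\frac{7\alpha^2}{16\ell(\ell-1)}\norm{\nabla f(\x_t)}^2\le\frac{7\alpha^2}{64(\ell-1)}\norm{\nabla f(\x_t)}^2$ for $\ell\ge 4$. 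Hence $\sigma_t^2\ge\frac{\alpha^2}{64(\ell-1)}\norm{\nabla f(\x_t)}^2$, and \eqref{eq:sig_low} follows.
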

\begin{proof}
	We have 
	\begin{align*}
		\sigma_t^2 
		\stackrel{\eqref{eq:sig_low0}}{\geq}& 
		\frac{\alpha^2}{2(\ell-1) } \sum_{i=1}^\ell \left(  \dotprod{\nabla f(\x_t), \si}^2 - \frac{1}{4\ell} \norm{\nabla f(\x_t)}^2 \right) - \frac{1}{\ell - 1} 	\sum_{i=1}^{\ell}  \gamma(\x, \alpha \bm{s}_j)^2\\
		\geq&
		\frac{\alpha^2}{2(\ell-1)} \cdot \frac{\norm{\nabla f(\x_t)}^2}{4} - \frac{1}{\ell - 1} 	\sum_{i=1}^{\ell}  \gamma(\x, \alpha \bm{s}_j)^2,
	\end{align*}
	where the first inequality is because of the condition $  \frac{1}{\ell}  \left( \sum_{i=1}^{\ell} \dotprod{\nabla f(\x), \si} \right)^2 \leq \frac{1}{4}\norm{\nabla f(\x)}^2$ and the last inequality is because of Lemma~\ref{lem:E2}.
	
	By Lemma~\ref{lem:gamma}, we can obtain that
	\begin{align*}
		\sigma_t^2 
		\stackrel{\eqref{eq:gamma_up}}{\geq}&
		\frac{\alpha^2}{8(\ell - 1)}  \cdot \norm{\nabla f(\x_t)}^2 - \frac{\alpha^4 (L_0^2 + L_1^2 \norm{\nabla f(\x_t)}^2 ) \zeta_1}{4(\ell-1)}\\
		=&
		\frac{\alpha^2}{8(\ell - 1)} \left( 1 - \alpha^2 L_1^2 \zeta_1 \right) \norm{\nabla f(\x_t)}^2   - \frac{\alpha^4 L_0^2 \zeta_1}{4(\ell - 1)}\\
		\geq&
		\frac{\alpha^2}{16(\ell-1)} \norm{\nabla f(\x_t)}^2 - \frac{\alpha^4 L_0^2 \zeta_1}{4(\ell - 1)}\\
		\geq&
		\frac{\alpha^2}{64(\ell-1)} \norm{\nabla f(\x_t)}^2,
	\end{align*}
	where the last two inequalities are because of the assumption on $\alpha^2$.	
\end{proof}

\begin{lemma}
	Assume that the smooth parameter of Algorithm~\ref{alg:SA} satisfies  
    \begin{align*}
    \alpha^2 \leq \min\{ \frac{1}{2L_1^2\zeta_1},\; \frac{3\norm{\nabla f(\x_t)}^2}{8L_0^2\zeta_1}\}.    
    \end{align*}
    Conditioned on event $\cE_{t,2}$ defined in Eq.~\eqref{eq:E2},  it holds that
	\begin{equation}\label{eq:sig_up}
		\sigma_t \leq \frac{13 \norm{\nabla f(\x_t)}}{2 \sqrt{\ell - 1}}.
	\end{equation}
\end{lemma}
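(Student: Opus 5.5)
The plan is to start from the upper bound of Lemma~\ref{lem:sig}, Eq.~\eqref{eq:sig_up0}, and control its two terms separately, reducing everything to a multiple of $\norm{\nabla f(\x_t)}^2/(\ell-1)$. Dropping the $\alpha$-dependence at the end needs a bound on $\alpha$ itself, which the statement does not give; this is the main obstacle, discussed in the last paragraph.

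\emph{Linear term.} On $\cE_{t,2}$ we have $\sum_{i=1}^\ell \dotprod{\nabla f(\x_t),\si}^2 = \nabla^\top f(\x_t)\bS_t\bS_t^\top\nabla f(\x_t)$. Since $k\ge 4$, this is at most $(5/4+1/k)\norm{\nabla f(\x_t)}^2 \le \frac{3}{2}\norm{\nabla f(\x_t)}^2$. The first term of Eq.~\eqref{eq:sig_up0} is therefore at most $\frac{9\alpha^2}{4(\ell-1)}\norm{\nabla f(\x_t)}^2$.

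\emph{Remainder term.} I would use Lemma~\ref{lem:gamma} with the safe form $(L_0+L_1\norm{\nabla f(\x_t)})^2 \le 2(L_0^2+L_1^2\norm{\nabla f(\x_t)}^2)$, which is what its proof actually yields. This needs $\cE_{t,1}$, which I would condition on together with $\cE_{t,2}$, as in the companion lower-bound lemma. It gives
\begin{equation*}
\frac{3}{\ell-1}\sum_{i=1}^\ell\gamma(\x_t,\alpha\si)^2 \le \frac{3\alpha^2}{2(\ell-1)}\left(\alpha^2L_0^2\zeta_1 + \alpha^2L_1^2\zeta_1\norm{\nabla f(\x_t)}^2\right).
\end{equation*}
The two hypotheses on $\alpha^2$ give $\alpha^2L_1^2\zeta_1\le 1/2$ and $\alpha^2L_0^2\zeta_1 \le \frac{3}{8}\norm{\nabla f(\x_t)}^2$. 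Hence the remainder is at most $\frac{21\alpha^2}{16(\ell-1)}\norm{\nabla f(\x_t)}^2$.

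Adding the two pieces gives $\sigma_t^2 \le \frac{57\alpha^2}{16(\ell-1)}\norm{\nabla f(\x_t)}^2$. To reach the stated $\alpha$-free constant $(13/2)^2 = 169/4$, it then suffices that $\frac{57}{16}\alpha^2 \le \frac{169}{4}$, i.e.\ $\alpha^2\le 676/57$, so $\alpha\le 1$ is more than enough. Taking square roots yields Eq.~\eqref{eq:sig_up}.

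\emph{Main obstacle.} This last step needs a numerical upper bound on $\alpha$ itself, and the stated hypotheses bound only $\alpha^2L_1^2\zeta_1$ and $\alpha^2L_0^2\zeta_1/\norm{\nabla f(\x_t)}^2$. Because $\zeta_1$ involves $\norm{\bH}^2$ and can be arbitrarily small, these do not cap $\alpha$ in general. I would therefore make explicit the smoothing-parameter regime $\alpha\le 1$, the natural small-$\alpha$ regime in which a parameter choice of this kind is made. Alternatively, one could note that the $\alpha$-scaled bound $\sigma_t\le \frac{2\alpha}{\sqrt{\ell-1}}\norm{\nabla f(\x_t)}$ implies the stated one for every $\alpha\le 13/4$. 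Without some such restriction, the stated inequality cannot follow from the listed hypotheses alone.
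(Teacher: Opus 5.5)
Your proposal is correct and follows the same route as the paper. You bound the first term of Eq.~\eqref{eq:sig_up0} on $\cE_{t,2}$ by $\frac{9\alpha^2}{4(\ell-1)}\norm{\nabla f(\x_t)}^2$, bound the remainder via Lemma~\ref{lem:gamma}, and absorb it with the two hypotheses on $\alpha^2$. Your explicit conditioning on $\cE_{t,1}$ and your factor-$2$ version of Lemma~\ref{lem:gamma} are legitimate repairs of steps the paper passes over.

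Your diagnosis of the obstacle is also right, and the paper's own proof confirms it. That proof actually ends with $\sigma_t \le \frac{13\alpha\norm{\nabla f(\x_t)}}{2\sqrt{\ell-1}}$, which is the form Theorem~\ref{thm:main} uses. So the statement has simply dropped a factor $\alpha$, and the right fix is to restore it rather than impose $\alpha\le 1$; your computation even gives the sharper constant $\sqrt{57}/4<2$.
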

\begin{proof}
	By Lemma~\ref{lem:sig}, we have
	\begin{align*}
		\sigma_t^2 
		\stackrel{\eqref{eq:sig_up0}}{\leq}& 
		\frac{3\alpha^2}{2(\ell-1)} \sum_{i=1}^{\ell} \dotprod{ \nabla f(\x_t), \bm{s}_i }^2 
		+ \frac{3}{\ell - 1} 	\sum_{i=1}^{\ell}  \gamma(\x_t, \alpha \bm{s}_j)^2\\
		\stackrel{\eqref{eq:E2}}{\leq}&
		\frac{9\alpha^2}{4(\ell-1)}\norm{\nabla f(\x_t)}^2
		+ \frac{3}{\ell - 1} 	\sum_{i=1}^{\ell}  \gamma(\x_t, \alpha \bm{s}_j)^2\\
		\leq&
		\frac{9\alpha^2}{4(\ell-1)}\norm{\nabla f(\x_t)}^2
		+ \frac{3\alpha^4 (L_0^2 + L_1^2 \norm{\nabla f(\x_t)}^2)\zeta_1}{4(\ell - 1)}, 
	\end{align*}
	where the last inequality is because of Lemma~\ref{lem:gamma}.
	
	Combining with the condition on $\alpha^2$, we can obtain that
	\begin{align*}
		\sigma_t^2
		\leq 
		\left(\frac{9}{4} + \frac{3}{4}\right) \frac{\alpha^2\norm{\nabla f(\x_t)}^2}{\ell - 1} 
		+ \frac{3\alpha^4L_0^2\zeta_1}{4(\ell-1)}
		\leq
		\left(\frac{9}{4} + \frac{3}{4} + \frac{9}{64}\right) \frac{\alpha^2\norm{\nabla f(\x_t)}^2}{\ell - 1},
	\end{align*}
	which implies that
	\begin{align*}
		\sigma_t \leq \frac{13 \alpha \norm{\nabla f(\x_t)}}{2 \sqrt{\ell - 1}}.
	\end{align*}
\end{proof}

\begin{lemma}
Letting events $\cE_{t,1}$,  $\cE_{t,2}$,  $\cE_{t,3}$, and $\cE_{t,4}$ hold and the smooth parameter of Algorithm~\ref{alg:SA} satisfies 
\begin{align*}
\alpha^2 \leq \min\{ \frac{1}{2L_1^2\zeta_1},\; \frac{3\norm{\nabla f(\x_t)}^2}{8L_0^2\zeta_1}\},    
\end{align*}
then it holds that
\begin{equation}\label{eq:length}
\frac{\norm{\g(\x_t)}_{\bH}}{\sigma_t}
\leq 
\frac{12\sqrt{\zeta_2}\sqrt{\ell - 1}}{\alpha} + \frac{ 4\sqrt{2} \sqrt{\zeta_2}  }{\alpha}.
\end{equation}
\end{lemma}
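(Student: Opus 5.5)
We split $\g(\x_t)$ as in Eq.~\eqref{eq:g_prop}, $\g(\x_t) = \bS_t\bS_t^\top \nabla f(\x_t) + \frac{\alpha}{4}\bS_t\bv$, and use the triangle inequality in the $\bH$-norm:
\begin{align*}
\norm{\g(\x_t)}_{\bH} \leq \norm{\bS_t\bS_t^\top\nabla f(\x_t)}_{\bH} + \frac{\alpha}{4}\norm{\bS_t\bv}_{\bH}.
\end{align*}
For any $\bm{u}\in\RR^\ell$ we have $\norm{\bS_t\bm{u}}_{\bH}^2 = \bm{u}^\top\bS_t^\top\bH\bS_t\bm{u} \leq \norm{\bS_t^\top\bH\bS_t}\norm{\bm{u}}^2 \leq \zeta_2\norm{\bm{u}}^2$ on $\cE_{t,3}$ (Lemma~\ref{lem:E3}). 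We apply this twice.

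\textbf{First term.} Take $\bm{u}=\bS_t^\top\nabla f(\x_t)$. Then $\norm{\bm{u}}^2 = \nabla^\top f(\x_t)\bS_t\bS_t^\top\nabla f(\x_t) \leq (5/4+1/k)\norm{\nabla f(\x_t)}^2 \leq \frac{3}{2}\norm{\nabla f(\x_t)}^2$ on $\cE_{t,2}$, using $k\geq4$. This gives $\norm{\bS_t\bS_t^\top\nabla f(\x_t)}_{\bH}\leq \sqrt{3\zeta_2/2}\,\norm{\nabla f(\x_t)}$.

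\textbf{Second term.} Take $\bm{u}=\bv$ and use Eq.~\eqref{eq:v_u} on $\cE_{t,1}$:
\begin{align*}
\frac{\alpha}{4}\norm{\bS_t\bv}_{\bH}\leq \frac{\alpha}{2}\sqrt{\zeta_2\zeta_1}\,(L_0+L_1\norm{\nabla f(\x_t)}).
\end{align*}
The assumption on $\alpha$ gives $\alpha\sqrt{\zeta_1}L_1\leq 1/\sqrt2$ and $\alpha\sqrt{\zeta_1}L_0\leq \sqrt{3/8}\,\norm{\nabla f(\x_t)}$. Hence the second term is at most $c\sqrt{\zeta_2}\norm{\nabla f(\x_t)}$ for an absolute constant $c$ (roughly $\tfrac12(1/\sqrt2+\sqrt{3/8})<1$).

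\textbf{Dividing by $\sigma_t$.} Both pieces are now of the form (constant)$\cdot\sqrt{\zeta_2}\norm{\nabla f(\x_t)}$. On $\cE_{t,2}\cap\cE_{t,4}$ we use the lower bound Eq.~\eqref{eq:sig_low}, $\sigma_t\geq \alpha\norm{\nabla f(\x_t)}/(8\sqrt{\ell-1})$. The gradient norm cancels and we obtain a bound of order $\sqrt{\zeta_2}\sqrt{\ell-1}/\alpha$. Concretely, $8(\sqrt{3/2}+c)\leq 12$ is plausible with care, which yields the first term of Eq.~\eqref{eq:length}. The case $\nabla f(\x_t)=0$ is excluded implicitly, since the $\alpha$ condition then forces $\alpha=0$.

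\textbf{Main obstacle.} The hard part is matching the exact constants, in particular the separate additive term $4\sqrt2\sqrt{\zeta_2}/\alpha$. It does not carry the $\sqrt{\ell-1}$ factor, which suggests the authors bound the $\bv$-part differently, perhaps not against $\sigma_t$ via Eq.~\eqref{eq:sig_low} in the same way. Since $\sqrt{\ell-1}\geq1$, I would first simply absorb everything into a bound of the form $C\sqrt{\zeta_2}\sqrt{\ell-1}/\alpha$. I would then verify $C\leq 12+4\sqrt2$, which suffices to imply Eq.~\eqref{eq:length}. If the constants fail to close, I would track the $L_0$ piece separately using the sharper constant $1/64$ inside the proof of Eq.~\eqref{eq:sig_low}.
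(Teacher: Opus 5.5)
Your decomposition is the one the paper uses, but as you carry it out it does not prove Eq.~\eqref{eq:length}, and neither of your rescue steps works.

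\textbf{Where the constants fail.} With your own constants the route gives $\norm{\g(\x_t)}_{\bH}/\sigma_t\le C\sqrt{\zeta_2}\sqrt{\ell-1}/\alpha$ with
\[
C=8\bigl(\sqrt{3/2}+\tfrac12(\sqrt{3/8}+1/\sqrt2)\bigr)\approx 15.1 .
\]
So ``$8(\sqrt{3/2}+c)\le 12$ with care'' is false. The $\bv$-part necessarily carries a factor $\sqrt{\ell-1}$, because every available lower bound on $\sigma_t$ is of order $\alpha\norm{\nabla f(\x_t)}/\sqrt{\ell-1}$. Tracking the $L_0$ piece separately does not change this.

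\textbf{The fallback is backwards.} Since $\sqrt{\ell-1}\ge 1$, we have $(12+4\sqrt2)\sqrt{\ell-1}\ge 12\sqrt{\ell-1}+4\sqrt2$. A bound $C\sqrt{\ell-1}$ with $C\le 12+4\sqrt2$ is therefore \emph{weaker} than the target, not stronger. You would need $C\le 12+4\sqrt2/\sqrt{\ell-1}$, which for the large $\ell=\Theta(k\log(T/\delta))$ used later means essentially $C\le 12$.

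\textbf{On the $\sqrt{\ell-1}$-free term.} Your suspicion is justified. In the paper's proof this term appears only because the factor $\sqrt{\ell-1}$ is dropped when the $\bv$-term is divided by $\alpha\norm{\nabla f(\x_t)}/(8\sqrt{\ell-1})$. Done correctly, the paper's route also yields only $(12+4\sqrt2)\sqrt{\zeta_2}\sqrt{\ell-1}/\alpha$.

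\textbf{The missing idea.} Compare $\g(\x_t)$ with $\sigma_t$ directly, instead of comparing each of them separately with $\norm{\nabla f(\x_t)}$. Write $\g(\x_t)=\bS_t\bm{c}$ with $\bm{c}=\bS_t^\top\nabla f(\x_t)+\frac{\alpha}{4}\bv$, i.e.\ $c_i=(f(\x_t+\alpha\si)-f(\x_t))/\alpha$, and set $\bar c=\frac1\ell\sum_i c_i$. Then $f(\x_t+\alpha\bm{s}_j)-\frac1\ell\sum_i f(\x_t+\alpha\si)=\alpha(c_j-\bar c)$, so $(\ell-1)\sigma_t^2=\alpha^2\sum_j(c_j-\bar c)^2$. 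On $\cE_{t,3}$ this gives
\begin{align*}
\frac{\norm{\g(\x_t)}_{\bH}^2}{\sigma_t^2}\le\frac{\zeta_2\norm{\bm{c}}^2}{\sigma_t^2}=\zeta_2\left(\frac{\ell-1}{\alpha^2}+\frac{\ell\,\bar c^{\,2}}{\sigma_t^2}\right).
\end{align*}
The centred part of $\bm{c}$ thus costs exactly $\sqrt{\ell-1}/\alpha$ with constant $1$. Only the mean has to be compared with the worst-case lower bound on $\sigma_t$.

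For the mean, use $\cE_{t,4}$, Cauchy--Schwarz ($|\bm{1}^\top\bv|\le\sqrt{\ell}\norm{\bv}$), and your bound $\frac{\alpha}{4}\norm{\bv}\le c\norm{\nabla f(\x_t)}$ with $c\approx 0.66<\frac23$:
\[
\sqrt{\ell}\,|\bar c|\le\tfrac12\norm{\nabla f(\x_t)}+\tfrac{\alpha}{4}\norm{\bv}\le\tfrac76\norm{\nabla f(\x_t)}.
\]
Combining this with Eq.~\eqref{eq:sig_low} (taken as given, as in the paper) gives
\[
\norm{\g(\x_t)}_{\bH}^2/\sigma_t^2\le\bigl(1+64\cdot\tfrac{49}{36}\bigr)\zeta_2(\ell-1)/\alpha^2<89\,\zeta_2(\ell-1)/\alpha^2 .
\]
Hence $\norm{\g(\x_t)}_{\bH}/\sigma_t<10\sqrt{\zeta_2}\sqrt{\ell-1}/\alpha$, which implies Eq.~\eqref{eq:length}.
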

\begin{proof}
	We have
\begin{align*}
\frac{\norm{\g(\x_t)}_{\bH}}{\sigma_t}
\stackrel{\eqref{eq:g_prop}}{\leq}&
\frac{\norm{\bS\bS^\top \nabla f(\x_t)}_{\bH} + \frac{\alpha}{4}\norm{\bS \bv}_{\bH}}{\sigma_t}
\leq
\frac{\sqrt{\zeta_2} \left( \norm{\bS^\top \nabla f(\x_t)} + \frac{\alpha}{4} \norm{\bv}\right)}{\sigma_t}\\
\leq&
\frac{\sqrt{\zeta_2}\left( \frac{3}{2}\norm{\nabla f(\x_t)} + \frac{\alpha}{4} \sqrt{4(L_0 + L_1 \norm{\nabla f(\x)})^2 \zeta_1}  \right)}{\sigma_t}\\
\stackrel{\eqref{eq:sig_low}}{\leq}&
\frac{\sqrt{\zeta_2}\left( \frac{3}{2}\norm{\nabla f(\x_t)} + \frac{\alpha}{4} \sqrt{4(L_0 + L_1 \norm{\nabla f(\x)})^2 \zeta_1}  \right)}{\frac{\alpha}{8\sqrt{(\ell-1)}} \norm{\nabla f(\x_t)}}\\
=&
\frac{12\sqrt{\zeta_2}\sqrt{\ell - 1}}{\alpha}
+ 
\frac{4 \sqrt{\zeta_2} \sqrt{\zeta_1} (L_0 + L_1 \norm{\nabla f(\x)})}{\norm{\nabla f(\x_t)}}\\
=&
\frac{12\sqrt{\zeta_2}\sqrt{\ell - 1}}{\alpha} + \frac{ 4 \sqrt{\zeta_2} \sqrt{\zeta_1} L_1 \alpha }{\alpha} + \frac{4 \sqrt{\zeta_2} \sqrt{\zeta_1} L_0 \alpha }{\alpha \norm{\nabla f(\x_t)}}\\
\leq&
\frac{12\sqrt{\zeta_2}\sqrt{\ell - 1}}{\alpha} + \frac{ 2\sqrt{2} \sqrt{\zeta_2}  }{\alpha} + \frac{2\sqrt{2}\sqrt{\zeta_2} }{\alpha}\\
=&
\frac{12\sqrt{\zeta_2}\sqrt{\ell - 1}}{\alpha} + \frac{ 4\sqrt{2} \sqrt{\zeta_2}  }{\alpha},
\end{align*}
where the last inequality is because of the condition on $\alpha^2$.
\end{proof}

\subsection{Descent Analysis and Main Results}
\begin{lemma}
Suppose the function $f:\BR^d\to\BR$ satisfy Assumption~\ref{ass:LL} and event $\cE_{t,3}$ defined in Eq.~\eqref{eq:E3} hold. 
After Algorithm~\ref{alg:SA} takes one update with step size $\eta \leq {3\alpha}/{(64 L_1 \zeta_2 \sqrt{\ell - 1})}$, it holds 
\begin{equation}\label{eq:ff_dec}
\begin{aligned}
f(\x_{t+1})
\leq &
f(\x_t) - \frac{7\eta}{8\sigma_t} \norm{ \bS^\top \nabla f(\x_t) }^2 + \frac{\eta\alpha^2\norm{\bv_t}^2}{8\sigma_t} \\
&+ \frac{\eta^2}{\sigma_t^2} \cdot \frac{L_0 + L_1 \norm{\nabla f(\x_t)}}{2} \cdot \left( 2\zeta_2 \cdot \norm{\bS^\top \nabla f(\x_t)}^2 +  \frac{\alpha^2\zeta_2}{8}  \norm{\bv_t}^2 \right),
\end{aligned}
\end{equation}
where $\zeta_2$ is defined in Eq.~\eqref{eq:zeta_2}.
\end{lemma}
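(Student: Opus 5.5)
The plan is to apply the $(L_0,L_1)$-smoothness inequality \eqref{eq:LL} with $\x=\x_t$ and $\y=\x_{t+1}=\x_t-\frac{\eta}{\sigma_t}\g(\x_t)$. Then I would expand the linear and quadratic terms using the decomposition \eqref{eq:g_prop}, namely $\g(\x_t)=\bS_t\bS_t^\top\nabla f(\x_t)+\frac{\alpha}{4}\bS_t\bv_t$. Before invoking \eqref{eq:LL}, I must check that the step is admissible, i.e. $\norm{\x_{t+1}-\x_t}_{\bH}\le 1/(L_1\sqrt{\norm{\bH}})$. Here I would use the bound \eqref{eq:length} on $\norm{\g(\x_t)}_{\bH}/\sigma_t$, which gives
\[
\frac{\eta}{\sigma_t}\norm{\g(\x_t)}_{\bH}\le \eta\Bigl(\frac{12\sqrt{\zeta_2}\sqrt{\ell-1}}{\alpha}+\frac{4\sqrt2\sqrt{\zeta_2}}{\alpha}\Bigr)\lesssim \frac{\eta\sqrt{\zeta_2}\sqrt{\ell-1}}{\alpha}.
\]
With $\eta\le 3\alpha/(64L_1\zeta_2\sqrt{\ell-1})$ this is at most of order $1/(L_1\sqrt{\zeta_2})$. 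Since $\zeta_2\ge\frac54\norm{\bH}$, it is at most $1/(L_1\sqrt{\norm{\bH}})$. This step implicitly uses the events and the $\alpha$ condition behind \eqref{eq:length}, so I would state that these are in force, or else absorb them into the hypotheses. I expect checking the numerical constants here to be the main (minor) obstacle.

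Next comes the linear term. I have
\[
-\frac{\eta}{\sigma_t}\dotprod{\nabla f(\x_t),\g(\x_t)}=-\frac{\eta}{\sigma_t}\norm{\bS_t^\top\nabla f(\x_t)}^2-\frac{\eta\alpha}{4\sigma_t}\dotprod{\bS_t^\top\nabla f(\x_t),\bv_t}.
\]
I would bound the cross term by Young's inequality, $\frac{\alpha}{4}|\dotprod{a,\bv_t}|\le \frac18\norm{a}^2+\frac{\alpha^2}{8}\norm{\bv_t}^2$ with $a=\bS_t^\top\nabla f(\x_t)$. This produces exactly $-\frac{7\eta}{8\sigma_t}\norm{\bS_t^\top\nabla f(\x_t)}^2+\frac{\eta\alpha^2}{8\sigma_t}\norm{\bv_t}^2$.

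Finally, for the quadratic term I write $\norm{\g(\x_t)}_{\bH}^2=\norm{\bS_t(a+\frac{\alpha}{4}\bv_t)}_{\bH}^2\le\norm{\bS_t^\top\bH\bS_t}\,\norm{a+\frac{\alpha}{4}\bv_t}^2$. On $\cE_{t,3}$ this is at most $\zeta_2\bigl(2\norm{a}^2+\frac{\alpha^2}{8}\norm{\bv_t}^2\bigr)$ by $(x+y)^2\le 2x^2+2y^2$. Multiplying by $\frac{\eta^2}{\sigma_t^2}\cdot\frac{L_0+L_1\norm{\nabla f(\x_t)}}{2}$ gives the last term of \eqref{eq:ff_dec}, which completes the argument.
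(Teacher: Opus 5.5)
Your proposal is correct and follows the paper's proof essentially step for step. It uses the same admissibility check via \eqref{eq:length}, where the constants give $\frac{9}{16}+\frac{3\sqrt{2}}{16\sqrt{\ell-1}}\le 1$; like you, the paper relies there on $\cE_{t,1}$--$\cE_{t,4}$ and the $\alpha$ condition without listing them as hypotheses. It also uses the same Young split of the cross term and reaches the same bound $\zeta_2\bigl(2\norm{\bS^\top\nabla f(\x_t)}^2+\frac{\alpha^2}{8}\norm{\bv_t}^2\bigr)$ on the quadratic term. The only cosmetic difference is that you factor out $\norm{\bS^\top\bH\bS}$ before applying $(x+y)^2\le 2x^2+2y^2$, while the paper expands the quadratic form and applies Young to its cross term; both give the identical bound.
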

\begin{proof}
First, by the update rule,  we have
\begin{align*}
\norm{\x_{t+1} - \x_t}_H 
=& 
\eta \cdot \frac{\norm{\g(\x_t)}_{\bH}}{\sigma_t}
\stackrel{\eqref{eq:length}}{\leq}
\eta \left( \frac{12\sqrt{\zeta_2}\sqrt{\ell - 1}}{\alpha} + \frac{ 4\sqrt{2} \sqrt{\zeta_2}  }{\alpha} \right)
\leq 
\frac{9}{16L_1 \sqrt{\zeta_2}} + \frac{3\sqrt{2}}{16 L_1 \sqrt{\ell - 1} \sqrt{\zeta_2}}\\
\leq& 
\frac{1}{L_1 \sqrt{\zeta_2}}
\leq 
\frac{1}{L_1 \sqrt{\norm{\bH}}},
\end{align*}
where the last inequality is because of definition of $\zeta_2$ in Eq.~\eqref{eq:zeta_2}.
Thus, the conditions in Assumption~\ref{ass:LL} hold.
Then, by the update step in Eq.~\eqref{eq:update} and $(L_0, L_1)$-smooth of $f(\x)$ in Assumption~\ref{ass:LL}, we can obtain that
\begin{align*}
	f(\x_{t+1}) 
	\stackrel{\eqref{eq:LL}}{\leq}&
	f(\x_t) + \dotprod{\nabla f(\x_t), \x_{t+1} - \x_t} + \frac{L_0 + L_1 \norm{\nabla f(\x_t)}}{2} ( \x_{t+1} - \x_t)^\top \bH  (\x_{t+1} - \x_t)\\
	=&
	f(\x_t) - \frac{\eta}{\sigma_t} \dotprod{\nabla f(\x_t), \g(\x_t)} + \frac{\eta^2}{\sigma_t^2} \frac{L_0 + L_1 \norm{\nabla f(\x_t)}}{2} \g(\x_t)^\top \bH \g(\x_t)\\
	\stackrel{\eqref{eq:g_prop}}{=}&
	f(\x_t) - \frac{\eta}{\sigma_t} \dotprod{\nabla f(\x_t), \bS\bS^\top \nabla f(\x_t) + \frac{\alpha}{4}\bS \bv_t} \\
	&+ \frac{\eta^2}{\sigma_t^2} \frac{L_0 + L_1 \norm{\nabla f(\x_t)}}{2} \left( \bS\bS^\top \nabla f(\x_t) + \frac{\alpha}{4}S \bv_t \right)^\top \bH \left( \bS\bS^\top \nabla f(\x_t) + \frac{\alpha}{4}S \bv_t \right).
\end{align*}
Furthermore, it holds that
\begin{align*}
& - \dotprod{\nabla f(\x_t), \bS\bS^\top \nabla f(\x_t) + \frac{\alpha}{4}\bS \bv_t}
=
-\norm{ \bS^\top \nabla f(\x_t) }^2 - \frac{1}{4}\dotprod{\bS^\top\nabla f(\x_t), \alpha \bv_t }\\
\leq&
-\norm{ \bS^\top \nabla f(\x_t) }^2 + \frac{1}{8} \norm{ \bS^\top\nabla f(\x_t) }^2 + \frac{\alpha^2 \norm{\bv_t}^2}{8}\\
=&
-\frac{7}{8}\norm{ \bS^\top \nabla f(\x_t) }^2 + \frac{\alpha^2 \norm{\bv_t}^2}{8}.
\end{align*}
We also have
\begin{align*}
&\left( \bS\bS^\top \nabla f(\x_t) + \frac{\alpha}{4}\bS \bv_t \right)^\top \bH \left( \bS\bS^\top \nabla f(\x_t) + \frac{\alpha}{4}\bS \bv_t \right)\\
=&
\nabla^\top f(\x_t) \bS\bS^\top \bH \bS\bS^\top \nabla f(\x_t) + \frac{\alpha}{2} \bv_t^\top \bS^\top \bH \bS \bS^\top \nabla f(\x_t) + \frac{\alpha^2}{16} \bv_t^\top \bS^\top \bH \bS \bv_t\\
\leq&
\norm{ \bS^\top \bH \bS}_2 \cdot \norm{\bS^\top \nabla f(\x_t)}^2 
+ \frac{\alpha}{2} \norm{\bv_t} \norm{ \bS^\top \bH \bS}_2 \norm{ \bS^\top \nabla f(\x_t)  } 
+ \frac{\alpha^2}{16} \norm{ \bS^\top \bH \bS }_2 \norm{\bv_t}^2\\
\leq&
\norm{ \bS^\top \bH \bS}_2 \cdot \norm{\bS^\top \nabla f(\x_t)}^2  + \left( \norm{ \bS^\top \bH \bS}_2 \cdot \norm{\bS^\top \nabla f(\x_t)}^2 + \frac{\alpha^2}{16} \norm{ \bS^\top \bH \bS }_2 \norm{\bv_t}^2\right) \\
&+ \frac{\alpha^2}{16} \norm{ \bS^\top \bH \bS }_2 \norm{\bv_t}^2\\
=&
2 \norm{ \bS^\top \bH \bS}_2 \cdot \norm{\bS^\top \nabla f(\x_t)}^2 
+
\frac{\alpha^2}{8} \norm{ \bS^\top \bH \bS }_2 \norm{\bv_t}^2.
\end{align*}

Combining above results, we can obtain that
\begin{align*}
&f(\x_{t+1})
\leq 
f(\x_t) - \frac{7\eta}{8\sigma_t} \norm{ \bS^\top \nabla f(\x_t) }^2 + \frac{\eta\alpha^2\norm{\bv_t}^2}{8\sigma_t} \\
&+ \frac{\eta^2}{\sigma_t^2} \cdot \frac{L_0 + L_1 \norm{\nabla f(\x_t)}}{2} \cdot \left( 2\norm{ \bS^\top \bH \bS}_2 \cdot \norm{\bS^\top \nabla f(\x_t)}^2 +  \frac{\alpha^2}{8} \norm{ \bS^\top \bH \bS }_2 \norm{\bv_t}^2 \right).
\end{align*}
Combining with event $\cE_{t,3}$, we can obtain the final result.
\end{proof}

\begin{lemma}
Let the function $f:\BR^d\to\BR$ satisfies Assumption~\ref{ass:LL} and events $\cE_{t,1}$, $\cE_{t,2}$, $\cE_{t,3}$, and $\cE_{t,4}$ hold. 
We set step size $\eta \leq {3\alpha}/{(64 L_1 \zeta_2 \sqrt{\ell - 1})}$ and the smooth parameter $\alpha^2 \leq \min\{ \frac{1}{2L_1^2\zeta_1},\; \frac{3\norm{\nabla f(\x_t)}^2}{8L_0^2\zeta_1}\}$ for Algorithm~\ref{alg:SA}, 
it holds that
\begin{equation}\label{eq:ff}
\begin{aligned}
	f(\x_{t+1})
	\leq&
	f(\x_t) 
	- \frac{\eta}{2\sigma_t} \norm{\nabla f(\x_t)}^2
	+ \frac{3\eta^2 L_0 \zeta_2\norm{\nabla f(\x_t) }^2}{2\sigma_t^2}
	+ \frac{\eta\alpha^2\norm{\bv_t}^2}{8\sigma_t}\\
	&
	+ \frac{\eta^2 \alpha^2\zeta_2 (L_0 + L_1 \norm{\nabla f(\x_t)}) \norm{\bv_t}^2}{16\sigma_t^2}.
\end{aligned}
\end{equation}
\end{lemma}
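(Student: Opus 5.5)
The plan is to start from the one-step inequality \eqref{eq:ff_dec} of the previous lemma. Its hypotheses are met: event $\cE_{t,3}$ holds and the step size satisfies $\eta \le 3\alpha/(64 L_1\zeta_2\sqrt{\ell-1})$. Three terms of \eqref{eq:ff_dec} then need to be rewritten:
\begin{itemize}
\item the descent term $-\frac{7\eta}{8\sigma_t}\norm{\bS^\top\nabla f(\x_t)}^2$;
\item the quadratic term $\frac{\eta^2}{\sigma_t^2}\cdot\frac{L_0+L_1\norm{\nabla f(\x_t)}}{2}\cdot 2\zeta_2\norm{\bS^\top\nabla f(\x_t)}^2$;
\item the $\bv_t$-term $\frac{\eta^2}{\sigma_t^2}\frac{L_0+L_1\norm{\nabla f(\x_t)}}{2}\frac{\alpha^2\zeta_2}{8}\norm{\bv_t}^2$.
\end{itemize}
The last one equals the final term of \eqref{eq:ff} verbatim, and the term $\frac{\eta\alpha^2\norm{\bv_t}^2}{8\sigma_t}$ carries over unchanged.

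\textbf{Bounding the descent and quadratic terms.} Using event $\cE_{t,2}$ with $k\ge 4$, I get $\frac12\norm{\nabla f(\x_t)}^2 \le \norm{\bS^\top\nabla f(\x_t)}^2 \le \frac32\norm{\nabla f(\x_t)}^2$. The lower bound turns the descent term into at most $-\frac{7\eta}{16\sigma_t}\norm{\nabla f(\x_t)}^2$. The upper bound turns the quadratic term into at most
\[
\frac{3\eta^2\zeta_2 (L_0+L_1\norm{\nabla f(\x_t)})}{2\sigma_t^2}\norm{\nabla f(\x_t)}^2 .
\]
Its $L_0$ part is exactly the $\frac{3\eta^2L_0\zeta_2\norm{\nabla f(\x_t)}^2}{2\sigma_t^2}$ term in \eqref{eq:ff}.

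\textbf{Absorbing the $L_1$ part (main step).} The $L_1$ portion,
\[
\frac{3\eta^2\zeta_2L_1\norm{\nabla f(\x_t)}^3}{2\sigma_t^2},
\]
must be absorbed into the descent term. I factor out $\frac{\eta}{\sigma_t}\norm{\nabla f(\x_t)}^2$ and show that $\frac{3\eta\zeta_2L_1\norm{\nabla f(\x_t)}}{2\sigma_t}$ is a small constant. The lower bound \eqref{eq:sig_low} gives $\sigma_t \ge \frac{\alpha}{8\sqrt{\ell-1}}\norm{\nabla f(\x_t)}$; it holds under $\cE_{t,2},\cE_{t,4}$ and the stated condition on $\alpha^2$, with $\cE_{t,1}$ supplying Lemma~\ref{lem:gamma}. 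Hence
\[
\frac{\norm{\nabla f(\x_t)}}{\sigma_t}\le \frac{8\sqrt{\ell-1}}{\alpha}.
\]
Combined with $\eta\le 3\alpha/(64L_1\zeta_2\sqrt{\ell-1})$, this gives
\[
\frac{3\eta\zeta_2L_1\norm{\nabla f(\x_t)}}{2\sigma_t}\le \frac{3}{2}\cdot\frac{3}{64}\cdot 8=\frac{9}{16}.
\]
That is too large, since only $\frac{7}{16}-\frac12<0$ would remain. This constant bookkeeping is therefore the main obstacle.

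\textbf{Fixing the constants.} First I will re-check that \eqref{eq:ff_dec} can use the sharper descent coefficient: take Young's inequality with a different split, e.g.\ $\frac14\langle \bS^\top\nabla f,\alpha\bv_t\rangle\le \frac1{32}\norm{\bS^\top\nabla f}^2+\frac{\alpha^2}{2}\norm{\bv_t}^2$. If this cannot be made consistent with the stated $\frac{\alpha^2\norm{\bv_t}^2}{8}$ term, I will use the tighter upper bound from $\cE_{t,2}$, namely $\frac54+\frac1k\le\frac32$, on only part of the quadratic term. Alternatively, I will note that the intended combination keeps the full $(L_0+L_1\norm{\nabla f})$ factor and bounds it against $-\frac{7}{16}\cdot$ by effectively requiring $\eta$ a constant factor smaller. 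In the worst case I will state the result with the $L_1$ contribution absorbed using the step-size condition, adjusting $\frac{3}{64}$ to a smaller constant. The target is a net coefficient of at most $-\frac{1}{2}$ on $\frac{\eta}{\sigma_t}\norm{\nabla f(\x_t)}^2$, with the remaining $L_0$ and $\bv_t$ terms exactly as displayed in \eqref{eq:ff}.
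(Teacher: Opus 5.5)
\paragraph{There is a genuine gap.} Your proposal stops at a menu of repairs, and none of them proves the displayed inequality.
\begin{itemize}
\item A different Young split changes the coefficient $\frac{\alpha^2}{8}$ of $\norm{\bv_t}^2$. Moreover, any split leaves a coefficient strictly below $1$ on $\norm{\bS_t^\top\nabla f(\x_t)}^2$. Under $\cE_{t,2}$ with $k=4$ that is strictly below $\frac12$ on $\norm{\nabla f(\x_t)}^2$, even before the $L_1$ penalty.
\item Shrinking $\frac{3}{64}$ or demanding a smaller $\eta$ changes the hypotheses, and would not help for the same reason.
\end{itemize}
The concrete bookkeeping error is that you convert the descent term with the lower bound of $\cE_{t,2}$ and the $L_1$ penalty with the upper bound. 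This costs the ratio $\frac{3/2}{1/2}=3$.

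The paper absorbs before converting. In the $L_1$ part $\frac{\eta^2\zeta_2 L_1\norm{\nabla f(\x_t)}}{\sigma_t^2}\norm{\bS_t^\top\nabla f(\x_t)}^2$ it replaces one factor $\sigma_t$ by the bound \eqref{eq:sig_low}. This gives $\frac{8\eta L_1\zeta_2\sqrt{\ell-1}}{\alpha}\cdot\frac{\eta}{\sigma_t}\norm{\bS_t^\top\nabla f(\x_t)}^2\le\frac{3}{8}\cdot\frac{\eta}{\sigma_t}\norm{\bS_t^\top\nabla f(\x_t)}^2$. That cancels against $-\frac{7\eta}{8\sigma_t}\norm{\bS_t^\top\nabla f(\x_t)}^2$ from \eqref{eq:ff_dec} and leaves $-\frac{\eta}{2\sigma_t}\norm{\bS_t^\top\nabla f(\x_t)}^2$. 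Only afterwards is $\cE_{t,2}$ invoked, with the upper bound $\frac32$ applied to the $L_0$ term alone.

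\paragraph{The constant $\frac12$ is not attainable.} The route above yields $-\frac{\eta}{2\sigma_t}\norm{\bS_t^\top\nabla f(\x_t)}^2\le-\frac{\eta}{4\sigma_t}\norm{\nabla f(\x_t)}^2$, not $-\frac{\eta}{2\sigma_t}\norm{\nabla f(\x_t)}^2$. The paper's written proof reaches $\frac12$ only by replacing the bracket $\frac78-\frac38$ with $1$, which is a slip. Your remark that only $\frac{7}{16}$ is available is the real symptom: with $k=4$ the constant $\frac12$ is false.

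Take $d=\ell=2$, $\bH=\bI$, $k=4$, $L_0=L_1=1$, and $f(\x)=x_1-\frac{1}{2}\norm{\x}^2$, which satisfies Assumption~\ref{ass:LL} globally. Let $\x_t=\bm{0}$, $\bm{s}_1=(\frac12,\frac12)^\top$, $\bm{s}_2=(-\frac12,0)^\top$. The lemma is deterministic given the events, so one realization suffices.
\begin{itemize}
\item All of $\cE_{t,1}$--$\cE_{t,4}$ hold, with $\norm{\bS_t^\top\nabla f(\x_t)}^2=\frac12$ on the boundary of $\cE_{t,2}$.
\item Normalize $\bv_t$ so that $\g(\x_t)=\bS_t\bS_t^\top\nabla f(\x_t)+\frac{\alpha}{4}\bS_t\bv_t$. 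Then $\dotprod{\nabla f(\x_t),\g(\x_t)}=\frac12-\frac{\alpha}{16}$ and $\frac{\alpha^2}{8}\norm{\bv_t}^2=\frac{5\alpha^2}{32}$. With the paper's literal $v^{(i)}=4\alpha^{-1}\gamma$ the $\bv_t$-terms are even smaller.
\item Hence $f(\x_{t+1})$ minus the right-hand side equals $\frac{\eta}{\sigma_t}\left(\frac{\alpha}{16}-\frac{5\alpha^2}{32}\right)-c\,\frac{\eta^2}{\sigma_t^2}$, where $c\ge 0$ and $\sigma_t>0$ do not depend on $\eta$.
\item Every admissible $\alpha$ satisfies $\alpha^2\le\frac{3}{8\zeta_1}=\frac{6}{121}$, so $\alpha<\frac25$. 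The expression is therefore positive once $\eta$ is small, and only an upper bound on $\eta$ is assumed.
\end{itemize}

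What you can and should prove is the version with $-\frac{\eta}{4\sigma_t}\norm{\nabla f(\x_t)}^2$, equivalently $-\frac{\eta}{2\sigma_t}\norm{\bS_t^\top\nabla f(\x_t)}^2$, following the absorb-then-convert order above. This is also what produces the constant $\frac{1}{26}$ used in the proof of Theorem~\ref{thm:main}.
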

\begin{proof}
First, we have
\begin{align*}
	&f(\x_{t+1})
	\stackrel{\eqref{eq:ff_dec}}{\leq} 
	f(\x_t) - \frac{7\eta}{8\sigma_t} \norm{ \bS^\top \nabla f(\x_t) }^2 + \frac{\eta\alpha^2\norm{\bv_t}^2}{8\sigma_t} \\
	&+ \frac{\eta^2}{\sigma_t^2} \cdot \frac{L_0 + L_1 \norm{\nabla f(\x_t)}}{2} \cdot \left( 2\zeta_2 \cdot \norm{\bS^\top \nabla f(\x_t)}^2 +  \frac{\alpha^2\zeta_2}{8}  \norm{\bv_t}^2 \right)\\
	\stackrel{\eqref{eq:sig_low}}{\leq}&
	f(\x_t) 
	- \frac{7\eta}{8\sigma_t} \norm{ \bS^\top \nabla f(\x_t) }^2  
	+ \frac{\eta^2 \zeta_2 L_1 \norm{\nabla f(\x_t)} \norm{\bS^\top \nabla f(\x_t)}^2 }{\sigma_t  \frac{\alpha \norm{\nabla f(\x_t)}}{8\sqrt{\ell - 1}} } \\
	&
	+ \frac{\eta^2 L_0 \zeta_2\norm{ \bS^\top \nabla f(\x_t) }^2}{\sigma_t^2}
	+ \frac{\eta\alpha^2\norm{\bv_t}^2}{8\sigma_t}
	+ \frac{\eta^2 \alpha^2\zeta_2 (L_0 + L_1 \norm{\nabla f(\x_t)}) \norm{\bv_t}^2}{16\sigma_t^2}\\
	\leq&
	f(\x_t) 
	- \frac{7\eta}{8\sigma_t} \norm{ \bS^\top \nabla f(\x_t) }^2  
	+ \frac{\eta^2 \zeta_2 L_1 \norm{\nabla f(\x_t)} \norm{\bS^\top \nabla f(\x_t)}^2 }{\sigma_t  \frac{\alpha \norm{\nabla f(\x_t)}}{8\sqrt{\ell - 1}}  } \\
	&
	+ \frac{\eta^2 L_0 \zeta_2\norm{ \bS^\top \nabla f(\x_t) }^2}{\sigma_t^2}
	+ \frac{\eta\alpha^2\norm{\bv_t}^2}{8\sigma_t}
	+ \frac{\eta^2 \alpha^2\zeta_2 (L_0 + L_1 \norm{\nabla f(\x_t)}) \norm{\bv_t}^2}{16\sigma_t^2}\\
	=&
	f(\x_t) 
	- \frac{\eta}{\sigma_t} \left( \frac{7}{8} -  \frac{8\eta L_1 \zeta_2\sqrt{\ell - 1} }{\alpha}\right)\norm{\bS^\top \nabla f(\x_t)}^2  \\
	&
	+ \frac{\eta^2 L_0 \zeta_2\norm{ \bS^\top \nabla f(\x_t) }^2}{\sigma_t^2}
	+ \frac{\eta\alpha^2\norm{\bv_t}^2}{8\sigma_t}
	+ \frac{\eta^2 \alpha^2\zeta_2 (L_0 + L_1 \norm{\nabla f(\x_t)}) \norm{\bv_t}^2}{16\sigma_t^2}\\
	\leq&
	f(\x_t) 
	- \frac{\eta}{\sigma_t} \norm{\bS^\top \nabla f(\x_t)}^2
	+ \frac{\eta^2 L_0 \zeta_2\norm{ \bS^\top \nabla f(\x_t) }^2}{\sigma_t^2}
	+ \frac{\eta\alpha^2\norm{\bv_t}^2}{8\sigma_t}\\
	&
	+ \frac{\eta^2 \alpha^2\zeta_2 (L_0 + L_1 \norm{\nabla f(\x_t)}) \norm{\bv_t}^2}{16\sigma_t^2}, 
\end{align*}
where the last inequality is because of $\eta \leq \frac{3\alpha}{64 L_1 \zeta_2 \sqrt{\ell - 1}}$.

Combining with Eq.~\eqref{eq:E2}, we can obtain that
\begin{align*}
f(\x_{t+1})
\leq&
f(\x_t) 
- \frac{\eta}{2\sigma_t} \norm{\nabla f(\x_t)}^2
+ \frac{3\eta^2 L_0 \zeta_2\norm{\nabla f(\x_t) }^2}{2\sigma_t^2}
+ \frac{\eta\alpha^2\norm{\bv_t}^2}{8\sigma_t}\\
&
+ \frac{\eta^2 \alpha^2\zeta_2 (L_0 + L_1 \norm{\nabla f(\x_t)}) \norm{\bv_t}^2}{16\sigma_t^2}.
\end{align*}

\end{proof}

\begin{theorem}\label{thm:main}
	Suppose that the function $f:\BR^d\to\BR$ satisfies Assumptions~\ref{ass:LL} and \ref{ass:lower-bounded}
	and $\bS_t \in\RR^{d \times \ell}$ is a $({1}/{4}, k, \delta)$-oblivious sketching matrix. 
	Let $\cE_t = \cap_{i=1}^4 \;\cE_{t,i} $ be the event that all events $\cE_{t,1}$, $\cE_{t,2}$, $\cE_{t,3}$, and $\cE_{t,4}$ (defined in Eq.~\eqref{eq:E1}-Eq.~\eqref{eq:E4})  hold.
	Set the smooth parameter $\alpha$ and step size $\eta$ satisfy
	\begin{align*}
		\alpha^2 \leq \min\left\{ \frac{1}{2L_1^2\zeta_1 T},\; \frac{3\norm{\nabla f(\x_t)}^2}{8L_0^2\zeta_1}\right\} 
		\qquad
		\mbox{and}
		\qquad
		\eta = \frac{\alpha}{\sqrt{(\ell - 1)\zeta_2 T} },
	\end{align*}
with $T>0$ being large enough that $\eta \leq {3\alpha}/{(64 L_1 \zeta_2 \sqrt{\ell - 1})}$ for Algorithm~\ref{alg:SA}, then the sequence $\{\x_t\}$ generated satisfies 
\begin{equation}\label{eq:main}
	\begin{aligned}
		\frac{1}{T} \sum_{t=0}^{T-1}	 \norm{\nabla f(\x_t)}
		\leq&
		\frac{26\sqrt{\zeta_2}\Big( f(\x_0) - f(\x_T) \Big)}{\sqrt{T}} 
		+ \frac{96\cdot 26 L_0 \sqrt{\zeta_2}}{\sqrt{T}}
		+ \frac{104 + 52\sqrt{3}}{L_1 T} \\
		&
		+\frac{26\cdot 32\cdot  \sqrt{\zeta_2}  }{T^{3/2}}  
		+\frac{24\cdot 26\cdot  \zeta_2  }{T^{3/2}}.
	\end{aligned}
\end{equation}
Furthermore, it holds that
\begin{equation}\label{eq:P1}
\PP\left(\cap_{t=0}^{T-1} \;\cE_t\right) 
\geq 
1 - 4 T\delta.
\end{equation}
\end{theorem}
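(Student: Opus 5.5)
We prove the probability bound \eqref{eq:P1} and the rate \eqref{eq:main} separately. For \eqref{eq:P1} we use a union bound. Lemmas~\ref{lem:E1}--\ref{lem:E3} each give failure probability at most $\delta$ per iteration. Lemma~\ref{lem:E4} gives failure probability $\exp(-\cO(\ell))$, which we absorb into $\delta$ (taking $\ell$ large enough that $\delta'\le\delta$). Summing over the four events and the $T$ iterations yields $\PP(\cap_t \cE_t)\ge 1-4T\delta$. For the rate we work on the event $\cap_t\cE_t$, so that Eq.~\eqref{eq:ff}, Eq.~\eqref{eq:sig_low}, Eq.~\eqref{eq:sig_up} and Eq.~\eqref{eq:v_u} all hold at every iteration.

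\textbf{Per-step descent.} Start from Eq.~\eqref{eq:ff} and bound each term with the $\sigma_t$ bounds.
\begin{itemize}
\item In the main descent term, use the upper bound \eqref{eq:sig_up}, $\sigma_t\le \frac{13\alpha\norm{\nabla f(\x_t)}}{2\sqrt{\ell-1}}$. This gives $-\frac{\eta}{2\sigma_t}\norm{\nabla f(\x_t)}^2 \le -\frac{\eta\sqrt{\ell-1}}{13\alpha}\norm{\nabla f(\x_t)}$. With $\eta=\alpha/\sqrt{(\ell-1)\zeta_2T}$ this equals $-\frac{1}{13\sqrt{\zeta_2T}}\norm{\nabla f(\x_t)}$, which is linear in the gradient norm.
\item In the positive terms, use the lower bound \eqref{eq:sig_low}, $\sigma_t\ge \frac{\alpha\norm{\nabla f(\x_t)}}{8\sqrt{\ell-1}}$. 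The term $\frac{3\eta^2L_0\zeta_2\norm{\nabla f}^2}{2\sigma_t^2}$ becomes at most $96\eta^2L_0\zeta_2(\ell-1)/\alpha^2 = 96L_0/T$.
\item For the $\bv_t$ terms, insert $\norm{\bv_t}^2\le 4(L_0+L_1\norm{\nabla f})^2\zeta_1$. Then use $\alpha^2\zeta_1\le \frac{1}{2L_1^2T}$ together with $L_0\alpha\sqrt{\zeta_1}\le \sqrt{3/8}\,\norm{\nabla f(\x_t)}$.
\end{itemize}
The point of the last step is that every power of $L_0$ in the error terms should be traded for a power of $\norm{\nabla f}$, which then cancels the $\norm{\nabla f}$ in the denominators coming from $\sigma_t$. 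Each error term then becomes either a constant divided by $T^{a}$, or a multiple of $\norm{\nabla f(\x_t)}/(L_1\text{-stuff}\cdot T^{b})$ with a small coefficient.

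\textbf{Telescoping.} Sum over $t=0,\dots,T-1$ and multiply through by $13\sqrt{\zeta_2 T}$; the factor $26$ in the statement indicates that we lose a factor of $2$ somewhere. Terms proportional to $\norm{\nabla f(\x_t)}$ with small coefficients are absorbed into the left-hand side using the constraint $\eta\le 3\alpha/(64L_1\zeta_2\sqrt{\ell-1})$, and this absorption is where the factor $2$ arises. Dividing by $T$ then produces the terms of \eqref{eq:main}:
\begin{itemize}
\item $f(\x_0)-f(\x_T)$ over $\sqrt T$;
\item $L_0$ over $\sqrt T$;
\item a $1/(L_1T)$ term from the $\eta\alpha^2 L_1^2\norm{\nabla f}^2\zeta_1/\sigma_t$ piece, giving the constants $104+52\sqrt3$;
\item higher-order terms of order $T^{-3/2}$ from the $\eta^2$-$\bv_t$ products.
\end{itemize}

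\textbf{Main obstacle.} The delicate step is the bookkeeping of the $\bv_t$ terms. There the mixed products $L_0L_1\norm{\nabla f}$ and $L_1^2\norm{\nabla f}^2$, divided by $\sigma_t$ or $\sigma_t^2$, must be shown to be either absorbable into the descent term or of order at most $\norm{\nabla f}\cdot T^{-1/2}$. My first attempt is to split $(L_0+L_1\norm{\nabla f})^2\le 2L_0^2+2L_1^2\norm{\nabla f}^2$ and treat each piece with the appropriate branch of the $\alpha^2$ constraint. If a residual $\norm{\nabla f}$ term remains that cannot be absorbed, I fall back on the inequality $\norm{\nabla f}\le \frac{1}{2c}+\frac{c}{2}\norm{\nabla f}^2$ to trade it.
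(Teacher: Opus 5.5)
\textbf{Where the proposal breaks.} Your union-bound argument for \eqref{eq:P1} matches the paper, and so does your overall skeleton: start from \eqref{eq:ff}, apply \eqref{eq:sig_up} to the descent term and \eqref{eq:sig_low} to the error terms, then telescope. Your bound $\frac{3\eta^2L_0\zeta_2\norm{\nabla f(\x_t)}^2}{2\sigma_t^2}\le 96L_0/T$ is also correct. The gap is exactly at the step you call the main obstacle, and the tools you propose do not close it.

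Inserting \eqref{eq:v_u} into $\frac{\eta\alpha^2\norm{\bv_t}^2}{8\sigma_t}$ gives $\frac{\eta\alpha^2(L_0+L_1\norm{\nabla f(\x_t)})^2\zeta_1}{2\sigma_t}$. After your split, bound its $L_0^2$ part with \eqref{eq:sig_low} and then trade $L_0\alpha\sqrt{\zeta_1}\le\sqrt{3/8}\,\norm{\nabla f(\x_t)}$ twice. The result is at most $\frac{3\eta\sqrt{\ell-1}}{\alpha}\norm{\nabla f(\x_t)}$. That is $39$ times your linearized descent term $\frac{\eta\sqrt{\ell-1}}{13\alpha}\norm{\nabla f(\x_t)}$, and it has no decay in $T$, so it cannot be absorbed. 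The $\eta$-constraint does not help, because both terms are linear in $\eta$. Your fallback $\norm{\nabla f}\le\frac{1}{2c}+\frac{c}{2}\norm{\nabla f}^2$ only trades it for a quadratic term. Nothing can absorb that, since by \eqref{eq:sig_low}--\eqref{eq:sig_up} the descent quantity $\frac{\eta\norm{\nabla f(\x_t)}^2}{\sigma_t}$ is itself only linear in $\norm{\nabla f(\x_t)}$.

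The failure comes from the order of operations. The two bounds on $\sigma_t$ differ by a factor $52$, and you pay that factor by bounding $\sigma_t$ in opposite directions before comparing terms. Two related problems:
\begin{itemize}
\item The $L_1^2\norm{\nabla f}^2$ piece cannot yield the $\frac{104+52\sqrt3}{L_1T}$ term. It is at most $\frac{4\eta\sqrt{\ell-1}}{\alpha T}\norm{\nabla f(\x_t)}$, which is proportional to the gradient norm and free of $1/L_1$.
\item The second $\bv_t$-term in \eqref{eq:ff} carries $(L_0+L_1\norm{\nabla f(\x_t)})^3$, not a square.
\end{itemize}

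\textbf{How to repair it.} Compare everything on the scale $\frac{\eta\norm{\nabla f(\x_t)}^2}{\sigma_t}$ before replacing $\sigma_t$. Set $\rho_T=\sqrt{3/8}+1/\sqrt{2T}$. The two $\alpha^2$-constraints give $\alpha\sqrt{\zeta_1}\,(L_0+L_1\norm{\nabla f(\x_t)})\le\rho_T\norm{\nabla f(\x_t)}$. Hence:
\begin{itemize}
\item the first $\bv_t$-term is at most $\frac{\rho_T^2}{2}\cdot\frac{\eta\norm{\nabla f(\x_t)}^2}{\sigma_t}$;
\item in the second $\bv_t$-term, the $L_1$-part is at most $\frac{3\rho_T^2}{32}\cdot\frac{\eta\norm{\nabla f(\x_t)}^2}{\sigma_t}$, using \eqref{eq:sig_low} once plus the $\eta$-constraint;
\item the $L_0$-part of the second term is at most $16\rho_T^2L_0/T$.
\end{itemize}
This leaves a fraction $\kappa_T=\frac12-\frac{19\rho_T^2}{32}>0$ of the descent for $T\ge 6$. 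Only now apply \eqref{eq:sig_up}, which gives $\frac1T\sum_t\norm{\nabla f(\x_t)}\le\frac{13\sqrt{\zeta_2}}{2\kappa_T\sqrt T}\big(f(\x_0)-f(\x_T)+(96+16\rho_T^2)L_0\big)$. This is the same $\sqrt{\zeta_2/T}$ rate, but not the constants of \eqref{eq:main}.

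\textbf{Why you should not chase the stated constants.} The paper's proof of this theorem absorbs nothing; its $26$ is simply a looser use of \eqref{eq:sig_up}. It substitutes \eqref{eq:v_u} with only a first power of $(L_0+L_1\norm{\nabla f(\x_t)})$ in the $1/\sigma_t$ term and only a square in the $1/\sigma_t^2$ term. As a result, after \eqref{eq:sig_low} and the $\alpha^2$-constraints every error term becomes a constant. The $1/L_1$ terms then come from $\alpha^2\le\frac{1}{2L_1^2\zeta_1T}$ and from bounding $\alpha^2$ by the geometric mean of its two constraints, with an additional stray factor $1/T$ when these are summed. That substitution does not follow from \eqref{eq:v_u}, so this route is not available to a correct argument.
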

\begin{proof}
	First, we have
\begin{align*}
f(\x_{t+1}) 
\stackrel{\eqref{eq:ff}\eqref{eq:v_u}}{\leq}&
f(\x_t)
- \frac{\eta}{2\sigma_t} \norm{\nabla f(\x_t)}^2
+ \frac{3\eta^2 L_0 \zeta_2\norm{\nabla f(\x_t) }^2}{2\sigma_t^2}
\\
&
+ \frac{\eta\alpha^2 (L_0 + L_1 \norm{\nabla f(\x_t)}) \zeta_1}{\sigma_t}
+ \frac{\eta^2 \alpha^2\zeta_2 (L_0 + L_1 \norm{\nabla f(\x_t)})^2 \zeta_1}{2\sigma_t^2}\\
\leq&
f(\x_t) 
- \frac{\eta}{2\sigma_t} \norm{\nabla f(\x_t)}^2
+ \frac{3\eta^2 L_0 \zeta_2\norm{ \nabla f(\x_t) }^2}{2\sigma_t^2}
+ \frac{\eta\alpha^2  L_1 \norm{\nabla f(\x_t)} \zeta_1}{\sigma_t}\\
&
+ \frac{\eta^2 \alpha^2 \zeta_1 \zeta_2  L_1^2 \norm{\nabla f(\x_t)}^2 }{\sigma_t^2}
+ \frac{\eta L_0 \zeta_1 \alpha^2   }{\sigma_t}
+ \frac{\eta^2 \alpha^2 \zeta_1 \zeta_2 L_0^2  }{\sigma_t^2}\\
\stackrel{\eqref{eq:sig_low}}{\leq}&
f(\x_t) 
- \frac{\eta}{2\sigma_t} \norm{\nabla f(\x_t)}^2
+ \frac{96(\ell - 1)\eta^2 L_0 \zeta_2}{\alpha^2}
+ 8\eta \sqrt{\ell - 1} L_1 \zeta_1 \alpha \\
&
+ 64\eta^2 (\ell - 1) \zeta_1\zeta_2 L_1^2 
+ \frac{\eta L_0 \zeta_1 \alpha^2   }{\sigma_t}
+ \frac{\eta^2 \alpha^2 \zeta_1 \zeta_2 L_0^2  }{\sigma_t^2}\\
\stackrel{\eqref{eq:sig_up}}{\leq}&
f(\x_t) 
- \frac{\eta \sqrt{\ell - 1}}{26\alpha }  \norm{\nabla f(\x_t)}
+ \frac{96(\ell - 1)\eta^2 L_0 \zeta_2}{\alpha^2}
+ 8\eta \sqrt{\ell - 1} L_1 \zeta_1 \alpha \\
&
+ 64\eta^2 (\ell - 1) \zeta_1\zeta_2 L_1^2 
+ \frac{\eta L_0 \zeta_1 \alpha^2   }{\sigma_t}
+ \frac{\eta^2 \alpha^2 \zeta_1 \zeta_2 L_0^2  }{\sigma_t^2}.
\end{align*}

Reformulate above equation as
\begin{align*}
\frac{\eta \sqrt{\ell - 1}}{26\alpha }  \norm{\nabla f(\x_t)}
\leq& 
f(\x_t) - f(\x_{t+1})
+ \frac{96(\ell - 1)\eta^2 L_0 \zeta_2}{\alpha^2}
+ 8\eta \sqrt{\ell - 1} L_1 \zeta_1 \alpha \\
&
+ 64\eta^2 (\ell - 1) \zeta_1\zeta_2 L_1^2 
+ \frac{\eta L_0 \zeta_1 \alpha^2   }{\sigma_t}
+ \frac{\eta^2 \alpha^2 \zeta_1 \zeta_2 L_0^2  }{\sigma_t^2}.
\end{align*}
By telescoping above equation, we can obtain 
\begin{align*}
\frac{\eta \sqrt{\ell - 1}}{26\alpha }  \sum_{t=0}^{T-1}	 \norm{\nabla f(\x_t)}
\leq& 
f(\x_0) - f(\x_T) + \frac{96(\ell - 1)\eta^2 L_0 \zeta_2}{\alpha^2} \cdot T 
+ 8\eta \sqrt{\ell - 1} L_1 \zeta_1 \alpha \cdot T \\
&
+ 64\eta^2 (\ell - 1) \zeta_1\zeta_2 L_1^2 \cdot T
+ \sum_{t=0}^{T-1} \left( \frac{\eta L_0 \zeta_1 \alpha^2   }{\sigma_t}
+ \frac{\eta^2 \alpha^2 \zeta_1 \zeta_2 L_0^2  }{\sigma_t^2}\right).
\end{align*}

Therefore,
\begin{align*}
	\frac{1}{T} \sum_{t=0}^{T-1}	 \norm{\nabla f(\x_t)}
	\leq& 
	\frac{26\alpha\Big( f(\x_0) - f(\x_T) \Big)}{\eta \sqrt{\ell - 1} } \cdot \frac{1}{T} 
	+ \frac{96\cdot 26 \sqrt{\ell - 1} \eta L_0 \zeta_2}{\alpha}
	+ 8\cdot 26 \cdot L_1 \zeta_1 \alpha^2\\
	&
	+26\cdot 64\cdot \eta\sqrt{\ell-1} \zeta_1\zeta_2 L_1^2 \alpha 
	+ \frac{26\alpha}{\sqrt{\ell - 1} T}\sum_{t=0}^{T-1} \left( \frac{ L_0 \zeta_1 \alpha^2   }{\sigma_t}
	+ \frac{\eta \alpha^2 \zeta_1 \zeta_2 L_0^2  }{\sigma_t^2}\right)\\
	=&
	\frac{26\Big( f(\x_0) - f(\x_T) \Big)\sqrt{\zeta_2}}{\sqrt{T}} 
	+ \frac{96\cdot 26 L_0 \sqrt{\zeta_2}}{\sqrt{T}}
	+ 8\cdot 26 \cdot L_1 \zeta_1 \alpha^2\\
	&
	+\frac{26\cdot 64\cdot  \zeta_1\sqrt{\zeta_2} L_1^2 \alpha^2 }{\sqrt{T}} 
	+ \frac{26\alpha}{\sqrt{\ell - 1} T}\sum_{t=0}^{T-1} \left( \frac{ L_0 \zeta_1 \alpha^2   }{\sigma_t}
	+ \frac{\eta \alpha^2 \zeta_1 \zeta_2 L_0^2  }{\sigma_t^2}\right),
\end{align*}
where the last equality is because of the setting that $\eta = \frac{\alpha}{\sqrt{\ell - 1} \sqrt{T}}$.

Furthermore,
\begin{align*}
	&\frac{\alpha}{\sqrt{\ell - 1}} \cdot \left(\frac{ L_0 \zeta_1 \alpha^2   }{\sigma_t} 
	+ \frac{\eta \alpha^2 \zeta_1 \zeta_2 L_0^2  }{\sigma_t^2}\right)\\
	\stackrel{\eqref{eq:sig_low}}{\leq}&
	\frac{\alpha}{\sqrt{\ell - 1}} 
	\cdot  \left(\frac{8\sqrt{\ell - 1}L_0 \zeta_1 \alpha}{\norm{\nabla f(\x_t)}}
	+ \frac{8^2 (\ell-1) \eta \zeta_1\zeta_2 L_0^2}{\norm{\nabla f(\x_t)}^2}\right)\\
	\leq&
	\frac{8L_0 \zeta_1 \alpha^2}{\norm{\nabla f(\x_t)}}
	+ \frac{8^2 \alpha^2 \zeta_1\zeta_2 L_0^2}{\norm{\nabla f(\x_t)}^2 \sqrt{T}}\\
	\leq&
	\frac{8L_0 \zeta_1 \sqrt{ \frac{1}{2L_1^2 \zeta_1}} \sqrt{ \frac{3\norm{\nabla f(\x_t)}^2}{8L_0^2\zeta_1} }  }{\norm{\nabla f(\x_t)}}
	+ \frac{8^2 \frac{3\norm{\nabla f(\x_t)}^2}{8L_0^2\zeta_1} \zeta_1\zeta_2 L_0^2}{\norm{\nabla f(\x_t)}^2 \sqrt{T}}\\
	=& 
	\frac{2\sqrt{3}}{L_1} + \frac{24\zeta_2}{\sqrt{T}},
\end{align*}
where the last inequality is because of the assumption that $\alpha^2 \leq \min\{ \frac{1}{2L_1^2\zeta_1},\; \frac{3\norm{\nabla f(\x_t)}^2}{8L_0^2\zeta_1}\}$.

Therefore,
\begin{align*}
\frac{1}{T} \sum_{t=0}^{T-1}	 \norm{\nabla f(\x_t)}
\leq& 
\frac{26\sqrt{\zeta_2}v\Big( f(\x_0) - f(\x_T) \Big)}{\sqrt{T}} 
+ \frac{96\cdot 26 L_0 \sqrt{\zeta_2}}{\sqrt{T}}
+ 8\cdot 26 \cdot L_1 \zeta_1 \alpha^2\\
&
+\frac{26\cdot 64\cdot  \zeta_1\sqrt{\zeta_2} L_1^2 \alpha^2 }{\sqrt{T}} 
+ \frac{52\sqrt{3}}{L_1 T} 
+ \frac{24 \cdot 26 \cdot\zeta_2}{T^{3/2}}\\
\leq&
\frac{26\sqrt{\zeta_2}\Big( f(\x_0) - f(\x_T) \Big)}{\sqrt{T}} 
+ \frac{96\cdot 26 L_0 \sqrt{\zeta_2}}{\sqrt{T}}
+ \frac{104}{L_1 T}  \\
&
+\frac{26\cdot 32\cdot  \sqrt{\zeta_2}  }{T^{3/2}} 
+ \frac{52\sqrt{3}}{L_1 T} 
+ \frac{24 \cdot 26 \cdot\zeta_2}{T^{3/2}}\\
=&
\frac{26\sqrt{\zeta_2}\Big( f(\x_0) - f(\x_T) \Big)}{\sqrt{T}} 
+ \frac{96\cdot 26 L_0 \sqrt{\zeta_2}}{\sqrt{T}}
+ \frac{104 + 52\sqrt{3}}{L_1 T} \\
&
+\frac{26\cdot 32\cdot  \sqrt{\zeta_2}  }{T^{3/2}}  
+\frac{24\cdot 26\cdot  \zeta_2  }{T^{3/2}}  
,
\end{align*}
where the second inequality is because of $\alpha^2 \leq \frac{1}{2L_1^2\zeta_1 T}$.

Next, we will prove the result in Eq.~\eqref{eq:P1}.
By a union bound, we have
\begin{align*}
	\PP\left(\cE_t \right) 
	\geq 
	1 -  \sum_{i=1}^4 \PP\left(\overline{\cE_{t,i}}\right)
	\geq 
	1 - \left( \delta + \delta + \delta + \exp(-\cO\left(\ell\right)) \right)
	\geq
	1 - 4\delta,
\end{align*}
where $\overline{\cE_{t,i}}$ denotes the complement of $\cE_{t,i}$, the second inequality is because of Lemma~\ref{lem:E1}-Lemma~\ref{lem:E4}, and the last inequality is because of $\ell = \cO\left( k\log\frac{1}{\delta} \right)$.

Using the union bound again, we can obtain that
\begin{align*}
\PP\left(\cap_{t=0}^{T-1} \;\cE_t\right) 
\geq 
1 - \sum_{t=0}^{T-1} \PP\left(\overline{\cE_t}\right)
\geq
1 - 4 T\delta,
\end{align*}
which concludes the proof.
\end{proof}

\begin{corollary}\label{cor:main}
Suppose that the function $f:\BR^d\to\BR$ satisfies Assumptions~\ref{ass:LL} and \ref{ass:lower-bounded}.
Let $\bS_t\in\RR^{d\times \ell}$ be the Gaussian or Rademacher sketching matrices with $\ell = 16k\log({4T}/{\delta})$ and $\delta\in(0,1)$.
Set the smooth parameter $\alpha$ and step size $\eta$ satisfy
\begin{align*}
	\alpha^2 \leq \min\left\{ \frac{1}{2L_1^2\zeta_1 T},\; \frac{3\varepsilon^2}{8L_0^2\zeta_1}\right\} 
	\qquad
	\mbox{and}
	\qquad
	\eta = \frac{\alpha}{\sqrt{(\ell - 1)\zeta_2 T} },
\end{align*}
with $T>0$ being large enough that $\eta \leq {3\alpha}/{(64 L_1 \zeta_2 \sqrt{\ell - 1})}$ and with $\varepsilon>0$ being the target precision for Algorithm~\ref{alg:SA}, then if the total iteration number $T$ satisfies 
\begin{equation}\label{eq:T}
	T = \cO\left( \frac{\norm{\bH}}{\varepsilon^2} + \frac{\tr(\bH)}{k\cdot \varepsilon^2} \right)
\end{equation}
with a probability at least $1-\delta$, the sequence $\{\x_t\}$ with $t = 0,\dots, T-1$ generated by Algorithm~\ref{alg:SA} satisfies that
\begin{align*}
	\min_{t=0,\dots, T-1} \norm{\nabla f(\x_t)} \leq \varepsilon.
\end{align*}
Furthermore, the total query complexity is 
\begin{equation}\label{eq:Q}
	Q = \cO\left( \left(\frac{k \norm{\bH}}{\varepsilon^2} +  \frac{\tr(\bH)}{\varepsilon^2}\right)\log\frac{1}{\varepsilon \delta}  \right).
\end{equation}
\end{corollary}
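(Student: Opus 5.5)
The plan is to derive the corollary directly from Theorem~\ref{thm:main}, applied with $\delta$ replaced by $\delta/(4T)$. The constraint $\alpha^2\le 3\|\nabla f(\x_t)\|^2/(8L_0^2\zeta_1)$ in the theorem depends on the iterate. We handle it by contradiction. Suppose $\min_t\|\nabla f(\x_t)\|>\varepsilon$. Then $\|\nabla f(\x_t)\|>\varepsilon$ for every $t\le T-1$. Hence the corollary's choice $\alpha^2\le 3\varepsilon^2/(8L_0^2\zeta_1)$ satisfies the theorem's per-iterate condition at every step, and all intermediate lemmas (Eq.~\eqref{eq:sig_low}, Eq.~\eqref{eq:sig_up}, Eq.~\eqref{eq:length}, Eq.~\eqref{eq:ff}) apply along the whole trajectory.

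\textbf{Probability.} For Gaussian or Rademacher sketches, the $(1/4,k,\delta')$ approximate matrix product property holds once $\ell=\Omega(k\log(1/\delta'))$ (Sec.~\ref{subsec:Sketch}). With $\delta'=\delta/(4T)$, the choice $\ell=16k\log(4T/\delta)$ suffices, up to the absolute constant, which I absorb into the 16. Lemma~\ref{lem:E4} gives failure probability $\exp(-\cO(\ell))\le\delta'$. Then Eq.~\eqref{eq:P1} gives $\PP(\cap_t\cE_t)\ge 1-4T\delta'=1-\delta$.

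\textbf{Rate.} On this event, Eq.~\eqref{eq:main} holds. Using $f(\x_T)\ge f^*$ from Assumption~\ref{ass:lower-bounded}, we get
\[
\varepsilon<\frac1T\sum_{t=0}^{T-1}\|\nabla f(\x_t)\|\le \frac{26\sqrt{\zeta_2}\,(f(\x_0)-f^*+96L_0)}{\sqrt T}+\frac{C}{L_1T}+\frac{C'(\sqrt{\zeta_2}+\zeta_2)}{T^{3/2}}.
\]
Choosing $T\ge c\,\zeta_2/\varepsilon^2$, with $c$ large depending on $f(\x_0)-f^*$ and $L_0$, makes the leading term at most $\varepsilon/3$. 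The lower-order terms are handled the same way: the $1/(L_1T)$ term needs $T\gtrsim 1/(L_1\varepsilon)$, and the $T^{-3/2}$ terms are dominated once $T\gtrsim\zeta_2/\varepsilon^2$ and $\varepsilon$ is small. This contradicts the assumption, so $\min_t\|\nabla f(\x_t)\|\le\varepsilon$. Since $\zeta_2=\frac54\|\bH\|+\frac{\tr(\bH)}{4k}$ by Eq.~\eqref{eq:zeta_2}, this gives Eq.~\eqref{eq:T}, treating $L_0$, $L_1$ and $f(\x_0)-f^*$ as constants.

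The step-size requirement $\eta=\alpha/\sqrt{(\ell-1)\zeta_2T}\le 3\alpha/(64L_1\zeta_2\sqrt{\ell-1})$ reduces to $T\ge(64/3)^2L_1^2\zeta_2$. This is compatible with the previous choice and is absorbed in the $\cO$ for $\varepsilon\lesssim 1/L_1$.

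\textbf{Queries.} Each iteration uses $\ell+1$ function evaluations, so $Q=T(\ell+1)=\cO\bigl(T\,k\log(T/\delta)\bigr)$. Substituting Eq.~\eqref{eq:T} gives $kT=\cO\bigl(k\|\bH\|/\varepsilon^2+\tr(\bH)/\varepsilon^2\bigr)$. Also $\log(T/\delta)=\cO(\log(1/(\varepsilon\delta)))$, since $T$ is polynomial in $1/\varepsilon$ and the problem constants $\|\bH\|$, $\tr(\bH)$. This yields Eq.~\eqref{eq:Q}.

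\textbf{Main obstacle.} The delicate point is the self-referential dependence: $\ell$ depends on $T$ through $\log(4T/\delta)$, and $\zeta_1$ (via $\ell$) enters $\alpha$. I would argue that only $\zeta_2$, which is independent of $\ell$, enters $T$. So one fixes $T$ first from Eq.~\eqref{eq:T}, then sets $\ell$, then $\alpha$ and $\eta$, and no circularity arises.
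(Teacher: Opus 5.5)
Your proposal is correct and takes essentially the same route as the paper: invoke Theorem~\ref{thm:main} with per-step failure probability $\delta/(4T)$ (covered by $\ell=16k\log(4T/\delta)$), set the right-hand side of Eq.~\eqref{eq:main} to $\varepsilon$ to get $T=\cO(\zeta_2/\varepsilon^2)$, and multiply by $\ell$ for the query count. Your additions fill in details the paper leaves implicit: the contradiction argument that justifies replacing the iterate-dependent condition $\alpha^2\le 3\norm{\nabla f(\x_t)}^2/(8L_0^2\zeta_1)$ by the $\varepsilon$-version, the explicit checks of the lower-order terms and of $T\ge(64/3)^2L_1^2\zeta_2$, and the observation that $T$ depends only on $\zeta_2$, so there is no circularity with $\ell$ and $\zeta_1$.
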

\begin{proof}
By the properties of Gaussian and Rademacher sketching matrices in Sec.~\ref{subsec:Sketch}, we can obtain that $\bS_t\in\RR^{d\times \ell}$ with $\ell = 16k\log\frac{T}{\delta}$ will lead to Eq.~\eqref{eq:P1} hold with $1 - \delta$.

By setting the right hand side of  Eq.~\eqref{eq:main} to $\varepsilon$, we can obtain that 
\begin{align*}
	T 
	= 
	\cO\left( \frac{\zeta_2}{\varepsilon^2} \right) 
	\stackrel{\eqref{eq:zeta_2}}{=}
	\cO\left( \frac{\norm{\bH}}{\varepsilon^2} + \frac{\tr(\bH)}{k\cdot \varepsilon^2} \right).
\end{align*}

The total query complexity is 
\begin{align*}
	Q 
	= 
	T \cdot \ell 
	= 
	\cO\left( \frac{\zeta_2}{\varepsilon^2} \cdot \ell \right) 
	=
	\cO\left( \left(\frac{k \norm{\bH}}{\varepsilon^2} +  \frac{\tr(\bH)}{\varepsilon^2}\right)\log\frac{1}{\varepsilon \delta}  \right),
\end{align*}
where the last equality is because of $\ell = \cO\left(k\log\frac{T}{\delta}\right)$.
\end{proof}

\begin{remark}\label{rmk:main}
Theorem~\ref{thm:main} and Corollary~\ref{cor:main} show Algorithm~\ref{alg:SA} can achieve an $\cO\left( {\norm{\bH}}/{\varepsilon^2} + {\tr(\bH)}/(k\varepsilon^2)\right)$ iteration complexity. 
If we increase $k$ which means increasing the batch size $\ell$, Algorithm~\ref{alg:SA} can find a $\varepsilon$-stationary point with less steps.
However Eq.~\eqref{eq:Q} implies that increasing the batch size $\ell$ will not reduce the query complexity at all.
\end{remark}
\begin{remark}\label{rmk:Q}
The query complexity in Eq.~\eqref{eq:Q} achieves a weak dimension dependency. 
To make this clear, we can reformulate Eq.~\eqref{eq:Q} as 
\begin{align*}
	Q = \cO\left(\left(\frac{k \norm{\bH}}{\varepsilon^2} +  \frac{\tr(\bH)}{\norm{\bH}} \cdot \frac{\norm{\bH}}{\varepsilon^2}\right)\log\frac{1}{\varepsilon \delta}\right).
\end{align*} 
In many real applications, we can choose proper matrix $\bH$ such that ${\tr(\bH)}/{\norm{\bH}} \ll d$.
Thus, Algorithm~\ref{alg:SA} can inherently achieve weak dimension dependency even in our extended $(L_0, L_1)$-smooth condition in Assumption \ref{ass:LL}.
Our work extend the weak dimension dependency of zeroth-order algorithms with sketching matrices  which holds for classical smooth functions to the  extended $(L_0, L_1)$-smooth functions.
\end{remark}

{
\begin{remark}\label{remark:deter-M}
Recall that even by taking $\hg(\x_t)=\nabla f(\x_t)$ and Assumption \ref{ass:LL} with $\bH=\bI$, finding an $\varepsilon$-stationary point via the fixed stepsize update
$\x_{t+1}=\x_t - \eta\hg(\x_t)$
with $\eta_t>0$ 
requires the iteration numbers that depends on 
$M=\sup\{||\nabla f(\x)|| \mid \x \text{~such that~} f(\x)\leq f(\x_0)\}$ \citep[Theorem 4]{zhang2019gradient}.
In contrast, our Corollary \ref{cor:main} shows that
the iteration numbers required by Algorithm~\ref{alg:SA} only depends on $\norm{\bH}$ and $\tr(\bH)$,
which implies the adaptive zeroth-order method can converge arbitrary faster than algorithm with fixed stepsize for the poor initialization.
\end{remark}}

\section{Convergence Analysis for the Stochastic Problem}

This section present the convergence analysis of Algorithm \ref{alg:SA1} for the deterministic problem (\ref{eq:obj1}).

\begin{assumption}\label{ass:hL}
	We assume that $f(\x;\xi)$ is twice differentiable and $\norm{\nabla^2 f(\x;\xi)} \leq \hL$ if $\x\in\cK$ where the set $\cK$ is defined as
	\begin{equation}
		\cK:=\left\{ \x \mid \norm{\x - \x_0} \leq \frac{3\sqrt{ T}}{350 L_1 \sqrt{\zeta_2}}  \right\}.
	\end{equation}
	And vector $\x_0$ is initial point of Algorithm~\ref{alg:SA1}, $T$ is the total iteration number, and $\zeta_2$ is defined in Eq.~\eqref{eq:zeta_2}. 
\end{assumption}
Above assumption is only used to bound the value of  smooth parameter $\alpha$.

\subsection{Events}

Assume the objective function $f(\x)$ satisfies Assumption~\ref{ass:LL} and $\bS_t$ is an  oblivious $\left({1}/{4}, k, \delta\right)$-random sketching matrix just as defined in Definition~\ref{def:ske} with $k\geq 4 $ and $\delta\in(0,1)$. 
Then, we will define the following two events:
\begin{align}
	&\hE_{t,1} := \left\{ \sum_{i=1}^{\ell} \norm{\si}^4  \leq \hzt_1  \mid \si \mbox{ is the } i\mbox{-th column of } \bS_t \right\}, \label{eq:E11}\\
	&\hE_{t,2} := \left\{ \left(\frac{3}{4} - \frac{1}{k}\right) \norm{\nabla f(\x_t;\;\xi_t)}^2 
	\leq 	\norm{\bS_t^\top \nabla f(\x_t;\;\xi_t) }^2
	\leq \left(\frac{5}{4} + \frac{1}{k}\right)  \norm{\nabla f(\x_t;\;\xi_t)}^2 \right\}, \label{eq:E21}\\
	&\hE_{t,3} := \left\{\frac{1}{\ell}  \left( \sum_{i=1}^{\ell} \dotprod{\nabla f(\x_t;\;\xi_t), \si} \right)^2 \leq \frac{1}{4}\norm{\nabla f(\x_t;\;\xi_t)}^2\right\}, \label{eq:E31}\\
	&\hE_{t,4} := \left\{ \norm{\bS_t^\top \left(\nabla f(\x_t;\;\xi_t) - \nabla f(\x_t) \right)}^2
	\leq \left(\frac{5}{4} + \frac{1}{k}\right)  \norm{\nabla f(\x_t;\;\xi_t) - \nabla f(\x_t)}^2 \right\} \label{eq:E41},
\end{align} 
where $\hzt_1$ is defined in Eq.~\eqref{eq:hzt}.

Next, we will bound the probabilities of above events happen.
\begin{lemma}\label{lem:E11}
	Letting $\bS_t$ be an  oblivious $\left({1}/{4}, k, \delta\right)$-random sketching matrix just as defined in Definition~\ref{def:ske} with $k\geq 4 $ and $\delta \in(0,1)$, then event $\hE_{t,1}$ holds with a probability at least $1-\delta$ with $\hzt_1$ defined as	
	\begin{equation}\label{eq:hzt}
		\hzt_1 :=  \left(\frac{5\ell}{4} + \frac{d \ell}{4k}\right)^2.
	\end{equation}
\end{lemma}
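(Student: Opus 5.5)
This follows the pattern of Lemma~\ref{lem:E1}, dropping the factor $\norm{\bH}^2$. First, the elementary bound
\begin{align*}
\sum_{i=1}^{\ell}\norm{\si}^4 \le \Big(\sum_{i=1}^{\ell}\norm{\si}^2\Big)^2 = \norm{\bS_t}_F^4
\end{align*}
holds because all summands are nonnegative. It therefore suffices to show that, with probability at least $1-\delta$,
\begin{align*}
\norm{\bS_t}_F^2 \le \frac{5\ell}{4} + \frac{d\ell}{4k}.
\end{align*}
This is exactly the content of Lemma~\ref{lem:bound_S_F_norm} (Eq.~\eqref{eq:S_norm}), which was already invoked in the proof of Lemma~\ref{lem:E1}. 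Squaring that bound gives $\sum_i\norm{\si}^4\le \hzt_1$ on the same event, so $\hE_{t,1}$ inherits probability at least $1-\delta$.

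If we instead derive the Frobenius bound from Definition~\ref{def:ske} directly, the plan is to apply Eq.~\eqref{eq:mat_pd} with $\gamma=1/4$ to a suitable test matrix $\bA$, obtaining control of $\bS_t\bS_t^\top$ around $\bI_d$. Taking $\bA=\bI_d$ gives
\begin{align*}
\norm{\bS_t\bS_t^\top-\bI_d}\le \tfrac14\bigl(1+\tfrac{d}{k}\bigr),
\end{align*}
so every eigenvalue of $\bS_t\bS_t^\top$ is at most $\tfrac54+\tfrac{d}{4k}$. The difficulty is converting this spectral bound into a Frobenius bound, since $\norm{\bS_t}_F^2=\tr(\bS_t\bS_t^\top)$. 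The eigenvalues of $\bS_t\bS_t^\top$ coincide with those of the $\ell\times\ell$ matrix $\bS_t^\top\bS_t$, so at most $\ell$ of them are nonzero. Hence
\begin{align*}
\tr(\bS_t\bS_t^\top)\le \ell\,\norm{\bS_t\bS_t^\top}\le \frac{5\ell}{4}+\frac{d\ell}{4k},
\end{align*}
which is exactly the required expression. The only delicate point is this rank-$\ell$ observation; everything else is immediate from Definition~\ref{def:ske} and the squaring step above.
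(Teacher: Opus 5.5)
Your proof is correct and is essentially the paper's argument: bound $\sum_{i}\norm{\si}^4$ by $\norm{\bS_t}_F^4$ and invoke Lemma~\ref{lem:bound_S_F_norm}. Your additional derivation of that Frobenius bound directly from Definition~\ref{def:ske} is also valid and makes the lemma self-contained, whereas the paper simply cites Lemma 11 of \citet{ye2025unified}; it takes $\bA=\bI_d$ and uses that $\bS_t\bS_t^\top$ has rank at most $\ell$, so its trace is at most $\ell\norm{\bS_t\bS_t^\top}$.
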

\begin{proof}
	We have
	\begin{align*}
	    \sum_{i=1}^{\ell} \norm{\si}^4
		\leq 
		\norm{\bS_t}_F^4
		\stackrel{\eqref{eq:S_norm}}{\leq}
		\left(\frac{5\ell}{4} + \frac{d \ell}{4k}\right)^2,
	\end{align*}
	where the last inequality is because of Lemma~\ref{lem:bound_S_F_norm}.
\end{proof}

\begin{lemma}\label{lem:E21}
	Letting $\bS\in\RR^{\ell\times d}$ be an oblivious $\left({1}/{4}, k, \delta\right)$-random sketching matrix just as defined in Definition~\ref{def:ske} with $k\geq 4 $ and $\delta \in(0,1)$, then event $\hE_{t,2}$ holds with  a probability at least $1-\delta$.
\end{lemma}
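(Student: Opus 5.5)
This is the stochastic analogue of Lemma~\ref{lem:E2}, and I would argue it the same way, applying Definition~\ref{def:ske} with the vector $\nabla f(\x_t;\;\xi_t)$ in place of $\nabla f(\x_t)$. The one subtlety is obliviousness: the sketching matrix must be independent of the matrix it is applied to. In Algorithm~\ref{alg:SA1}, $\bS_t$ is generated independently of $\x_t$ and of $\xi_t$. Hence, after conditioning on $(\x_t,\xi_t)$, the vector $\bm{a}:=\nabla f(\x_t;\;\xi_t)$ is fixed, and $\bS_t$ is still an oblivious $(1/4,k,\delta)$-sketch relative to any matrix built from $\bm{a}$.

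Next I would apply Eq.~\eqref{eq:mat_pd} with $\bA = \bm{a}\bm{e}_1^\top\in\RR^{d\times d}$, where $\bm{e}_1$ is the first standard basis vector. This is the rank-one embedding that the proof of Lemma~\ref{lem:E2} uses implicitly. Then
\begin{align*}
\bA^\top\bS\bS^\top\bA - \bA^\top\bA = \left(\norm{\bS^\top\bm{a}}^2-\norm{\bm{a}}^2\right)\bm{e}_1\bm{e}_1^\top,
\end{align*}
so its spectral norm is $\bigl|\norm{\bS^\top\bm{a}}^2-\norm{\bm{a}}^2\bigr|$. Since $\bA$ has rank one, $\norm{\bA}=\norm{\bA}_F=\norm{\bm{a}}$. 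Eq.~\eqref{eq:mat_pd} therefore gives, with probability at least $1-\delta$ (conditionally, and hence unconditionally after averaging),
\begin{align*}
\bigl|\norm{\bS_t^\top\bm{a}}^2-\norm{\bm{a}}^2\bigr|\le \frac14\left(1+\frac1k\right)\norm{\bm{a}}^2 .
\end{align*}

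Rearranging with the triangle inequality, exactly as in Lemma~\ref{lem:E2}, gives
\begin{align*}
\left(\frac34-\frac1{4k}\right)\norm{\bm{a}}^2\le\norm{\bS_t^\top\bm{a}}^2\le\left(\frac54+\frac1{4k}\right)\norm{\bm{a}}^2 .
\end{align*}
These bounds are at least as strong as those defining $\hE_{t,2}$, because $\frac1{4k}\le\frac1k$. So $\hE_{t,2}$ holds with probability at least $1-\delta$.

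The only step needing care is the independence argument. One has to state that $\xi_t$ and $\bS_t$ are drawn independently and that $\x_t$ depends only on past randomness, so that conditioning preserves the oblivious guarantee. The rest is routine.
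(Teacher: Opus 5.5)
Your proof is correct and follows the paper's route: the paper says to repeat the argument of Lemma~\ref{lem:E2} with $\nabla f(\x_t;\;\xi_t)$ in place of $\nabla f(\x_t)$, i.e., apply Eq.~\eqref{eq:mat_pd} to that vector and rearrange with the triangle inequality. Your rank-one embedding $\bA=\bm{a}\bm{e}_1^\top$ and your conditioning on $(\x_t,\xi_t)$ make explicit two points the paper leaves implicit: Definition~\ref{def:ske} is stated for $d\times d$ matrices, and obliviousness needs $\bS_t$ to be independent of the random vector $\nabla f(\x_t;\;\xi_t)$.
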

\begin{proof}
	The proof is almost the same as that of Lemma~\ref{lem:E2} but replacing $\nabla f(\x)$ with $\nabla f(\x;\;\xi)$.
\end{proof}

\begin{lemma}\label{lem:E31}
	If $\bS\in\RR^{d\times \ell}$ is random sketching matrix which is Gaussian, Rademacher, SRHT, or Sparse Embedding (refer to Sec.~\ref{subsec:Sketch}), then event $\hE_{t,3}$ defined in Eq.~\eqref{eq:E31} holds  with  a probability at least $1 - \delta'$ with $\delta' = \exp\left(-\cO(\ell)\right)$.
\end{lemma}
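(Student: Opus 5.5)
The plan is to reduce Lemma~\ref{lem:E31} to its deterministic counterpart, Lemma~\ref{lem:E4}, whose proof was deferred to Lemma~\ref{lem:ss}. The event $\hE_{t,3}$ has exactly the form of $\cE_{t,4}$ with $\nabla f(\x_t)$ replaced by $\bm{u}:=\nabla f(\x_t;\,\xi_t)$. The only care needed is independence. In Algorithm~\ref{alg:SA1}, $\bS_t$ is generated obliviously, so it is independent of $\xi_t$ and of $\x_t$, which depends only on $\bS_0,\dots,\bS_{t-1},\xi_0,\dots,\xi_{t-1}$. We therefore condition on $(\x_t,\xi_t)$, treat $\bm{u}$ as a fixed vector, and apply the deterministic tail bound to $\bm{u}$. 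If $\bm{u}=0$ the event holds trivially.

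For a fixed nonzero $\bm{u}$ the relevant quantity is
\begin{equation*}
\frac{1}{\ell}\Big(\sum_{i=1}^\ell\dotprod{\bm{u},\si}\Big)^2=\frac{1}{\ell}\big(\bm{1}^\top\bS_t^\top\bm{u}\big)^2 .
\end{equation*}
Write $\bm{w}=\bS_t\bm{1}/\sqrt{\ell}=\ell^{-1/2}\sum_i\si$. The event $\hE_{t,3}$ is then $\dotprod{\bm{u},\bm{w}}^2\le\frac14\norm{\bm{u}}^2$.

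In the Gaussian case, each $\si\sim\cN(0,\bI_d/\ell)$ independently, so $\bm{w}\sim\cN(0,\bI_d/\ell)$. Hence $\dotprod{\bm{u},\bm{w}}\sim\cN(0,\norm{\bm{u}}^2/\ell)$, and
\begin{equation*}
\PP\big(\dotprod{\bm{u},\bm{w}}^2>\tfrac14\norm{\bm{u}}^2\big)\le 2\exp(-\ell/8)=\exp(-\cO(\ell)).
\end{equation*}
In the Rademacher case, $\dotprod{\bm{u},\bm{w}}=\ell^{-1}\sum_{i,j}u_j\epsilon_{ij}$ is a Rademacher sum with coefficient vector of squared norm $\norm{\bm{u}}^2/\ell$. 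Hoeffding's inequality gives the same $\exp(-\ell/8)$ type bound.

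For SRHT and sparse embeddings the same conclusion should follow from a subgaussian or Bernstein bound on the single linear form $\bm{1}^\top\bS_t^\top\bm{u}$. This is exactly what Lemma~\ref{lem:ss} supplies, and I will invoke it rather than redo it.

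The main obstacle is the conditioning argument rather than the tail estimate: one must check that $\bm{u}$ is measurable with respect to information independent of $\bS_t$. Once that is established, the unconditional probability bound follows by taking expectation over $(\x_t,\xi_t)$, because the conditional bound $\exp(-\cO(\ell))$ is uniform in $\bm{u}$.
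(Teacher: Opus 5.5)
Your reduction is the same as the paper's: its proof of Lemma~\ref{lem:E31} only points to Lemma~\ref{lem:ss} with $\nabla f(\x)$ replaced by $\nabla f(\x;\xi)$. Your Gaussian and Rademacher bounds of $2\exp(-\ell/8)$, and the conditioning on $(\x_t,\xi_t)$, are correct and more explicit than what the paper writes. The gap is the SRHT case, which you hand to Lemma~\ref{lem:ss}. That lemma relies on $\bS\bm{1}_\ell$ having i.i.d.\ mean-zero sub-Gaussian entries, which is false for SRHT, and the conclusion itself fails there. Write the paper's SRHT as $\bS^\top=c\,\bP\bH_d\bD\in\RR^{\ell\times d}$. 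Then $\si=c\,\bD\bm{h}_{k_i}$, where $\bm{h}_{k_i}$ is a sampled row of the Walsh--Hadamard matrix, and every such row has first entry $+1$. Hence $\dotprod{\bm{e}_1,\si}=c\,D_{11}$ for every $i$, so $(\bS\bm{1}_\ell)_1=c\ell D_{11}$, and
\begin{equation*}
\frac{1}{\ell}\Big(\sum_{i=1}^{\ell}\dotprod{\bm{e}_1,\si}\Big)^2=\ell c^2=\bm{e}_1^\top\bS\bS^\top\bm{e}_1\geq\frac{3}{4}-\frac{1}{4k}>\frac{1}{4}\norm{\bm{e}_1}^2 .
\end{equation*}
The inequality is the $(1/4,k,\delta)$ property for $\bA=\bm{e}_1\bm{e}_1^\top$. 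The middle quantity is deterministic, so the bound must hold surely; with the usual scaling $c=1/\sqrt{\ell}$ it equals $1$.

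So if $\nabla f(\x_t;\xi_t)$ is parallel to $\bm{e}_1$, the event $\hE_{t,3}$ fails with probability one. More generally, conditioned on $\bD$ (and with $c=1/\sqrt{\ell}$), $\sum_i\dotprod{\bm{u},\si}$ has mean $\sqrt{\ell}\,D_{11}u_1$ and fluctuations of order $\norm{\bm{u}}$. The bound therefore breaks whenever $|u_1|$ is noticeably larger than $\norm{\bm{u}}/2$. Your conditioning step needs a tail bound uniform in the fixed vector $\bm{u}$, and that bound is false for SRHT as the paper defines it. The SRHT case must be dropped, or the sketch modified, e.g.\ by independent random signs on the $\ell$ sampled rows. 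With that change, conditionally on $\bD$ and the sampled rows, $\sum_i\dotprod{\bm{u},\si}$ is a Rademacher sum with coefficient norm $\norm{\bS^\top\bm{u}}$, and your Hoeffding argument on $\hE_{t,2}$ gives $\exp(-\Omega(\ell))$. Sparse embeddings need no appeal to Lemma~\ref{lem:ss}. With $s$ nonzeros $\pm s^{-1/2}$ per coordinate and independent signs, $\bm{1}_\ell^\top\bS^\top\bm{u}=\sum_j u_j s^{-1/2}\sum_{r=1}^{s}\sigma_{jr}$ is a Rademacher sum with coefficient norm exactly $\norm{\bm{u}}$, and Hoeffding again gives $2\exp(-\ell/8)$.
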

\begin{proof}
	Refer to Lemma~\ref{lem:ss} with $\nabla f(\x)$ replaced by $\nabla f(\x;\;\xi)$.
\end{proof}

\begin{lemma}\label{lem:E41}
	Letting $\bS\in\RR^{\ell\times d}$ is an oblivious $\left({1}/{4}, k, \delta\right)$-random sketching matrix just as defined in Definition~\ref{def:ske} with $k\geq 4 $ and $\delta \in(0,1)$, then event $\hE_{t,4}$ holds with  a probability at least $1-\delta$.
\end{lemma}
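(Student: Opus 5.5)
The plan is to follow the proof of Lemma~\ref{lem:E2} verbatim, with the vector $\nabla f(\x_t)$ replaced by the deviation vector $\bm{u}_t := \nabla f(\x_t;\;\xi_t) - \nabla f(\x_t)$.

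The key point is that the sketching property must apply to a fixed matrix independent of $\bS_t$. Since $\bS_t$ is oblivious and generated independently of $\xi_t$ and of the past iterates, I will condition on $\x_t$ and $\xi_t$. Under this conditioning $\bm{u}_t$ is a fixed vector, and $\bS_t$ is still an oblivious $(1/4,k,\delta)$-random sketching matrix with respect to it.

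I will apply Definition~\ref{def:ske} with the matrix $\bA = [\bm{u}_t, \bm{0}, \dots, \bm{0}] \in \RR^{d\times d}$, a rank-one matrix whose only nonzero column is $\bm{u}_t$. For this choice:
\begin{itemize}
\item $\bA^\top \bS_t\bS_t^\top \bA - \bA^\top\bA$ has a single nonzero entry, namely $\bm{u}_t^\top \bS_t\bS_t^\top \bm{u}_t - \norm{\bm{u}_t}^2$;
\item $\norm{\bA} = \norm{\bA}_F = \norm{\bm{u}_t}$.
\end{itemize}
Eq.~\eqref{eq:mat_pd} then gives, with probability at least $1-\delta$ over $\bS_t$,
\begin{align*}
\Big| \norm{\bS_t^\top \bm{u}_t}^2 - \norm{\bm{u}_t}^2 \Big| \leq \frac{1}{4}\left(1 + \frac{1}{k}\right)\norm{\bm{u}_t}^2 .
\end{align*}

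The triangle inequality then yields
\begin{align*}
\norm{\bS_t^\top \bm{u}_t}^2 \leq \left(\frac{5}{4} + \frac{1}{4k}\right)\norm{\bm{u}_t}^2 \leq \left(\frac{5}{4}+\frac{1}{k}\right)\norm{\bm{u}_t}^2 ,
\end{align*}
which is exactly $\hE_{t,4}$. Since this holds conditionally with probability at least $1-\delta$ for every realization of $(\x_t,\xi_t)$, taking expectation over $(\x_t,\xi_t)$ gives the unconditional bound.

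The only delicate step is the independence/conditioning argument: we must make sure that $\bm{u}_t$ does not depend on $\bS_t$. This holds because $\xi_t$ and $\x_t$ are determined before $\bS_t$ is drawn, or drawn independently of it. The remaining steps are a one-line computation.
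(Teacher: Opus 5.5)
Your proposal is correct and follows the paper's argument. The paper's proof reruns the proof of Lemma~\ref{lem:E2} with $\nabla f(\x;\xi)-\nabla f(\x)$ in place of $\nabla f(\x)$, i.e., it applies Eq.~\eqref{eq:mat_pd} directly to that vector and reads off the upper bound. Your zero-padding to a $d\times d$ matrix $\bA$ and your conditioning on $(\x_t,\xi_t)$ make explicit two points the paper leaves implicit, and your constant $\frac{5}{4}+\frac{1}{4k}$ is slightly sharper than the stated $\frac{5}{4}+\frac{1}{k}$.
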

\begin{proof}
	The proof is almost the same as that of Lemma~\ref{lem:E2} but replacing $\nabla f(\x)$ with $\nabla f(\x;\xi) - \nabla f(\x)$.
\end{proof}

\subsection{Properties of Estimated Gradient and Standard Deviation}

\begin{lemma}\label{lem:hg_prop}
	Given  $\bS\in\RR^{d \times \ell}$ being a pre-defined matrix, let each function $f(\x;\xi)$ satisfy Assumption~\ref{ass:hL} with all $\x$ and $\x + \alpha \si$ belonging to $\cK$. 
	Conditioned on event $\hE_{t,1}$ defined in Eq.~\eqref{eq:E11},  the approximate stochastic gradient $\hg(\x)$ defined in Eq.~\eqref{eq:g1} satisfies that
	\begin{equation}\label{eq:hg_prop}
		\hg(\x) = \bS\bS^\top \nabla f(\x;\xi) + \frac{\alpha}{4}\bS\hv \mbox{ and } \norm{\hv}^2 \le  \hL^2 \hzt_1, 
	\end{equation} 
	where $\hv$ is an $\ell$-dimension vector with $\hat{v}^{(i)} = \si^\top \nabla^2 f(\x_{\si};\xi) \si$ being its $i$-th entry, $\si$ is the $i$-th column of $\bS$, $\x_{\si}$ is a point lying in the line between $\x$ and $\x + \si$,  
	  and $\hzt_1$ is defined in Eq.~\eqref{eq:hzt}.
\end{lemma}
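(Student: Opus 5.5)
The plan is to mirror the deterministic decomposition in Eq.~\eqref{eq:g_prop}, replacing the $(L_0,L_1)$ bound on the remainder by a second-order Taylor expansion of $f(\cdot;\xi)$. This is available because Assumption~\ref{ass:hL} gives twice differentiability and a Hessian bound on $\cK$.

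Fix $i$ and set $\phi_i(\tau)=f(\x+\tau\alpha\si;\xi)$ for $\tau\in[0,1]$. Since $\x$ and $\x+\alpha\si$ lie in $\cK$, I will assume (as the statement implicitly does) that the segment joining them also lies in $\cK$. Taylor's theorem with Lagrange remainder gives a $\theta_i\in(0,1)$ with
\begin{align*}
f(\x+\alpha\si;\xi)-f(\x;\xi)=\alpha\dotprod{\nabla f(\x;\xi),\si}+\frac{\alpha^2}{2}\,\si^\top\nabla^2 f(\x+\theta_i\alpha\si;\xi)\,\si .
\end{align*}
Dividing by $\alpha$, multiplying by $\si$ and summing over $i$ yields
\begin{align*}
\hg(\x)=\sum_{i=1}^{\ell}\dotprod{\nabla f(\x;\xi),\si}\si+\frac{\alpha}{2}\sum_{i=1}^{\ell}\big(\si^\top\nabla^2 f(\x_{\si};\xi)\si\big)\si ,
\end{align*}
where $\x_{\si}=\x+\theta_i\alpha\si$. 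The first sum is exactly $\bS\bS^\top\nabla f(\x;\xi)$, and the second is $\frac{\alpha}{2}\bS\tilde{\bv}$ with $\tilde{v}^{(i)}=\si^\top\nabla^2 f(\x_{\si};\xi)\si$. This is the step I expect to be the main obstacle, in two respects.

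\textbf{Prefactor.} The Lagrange remainder naturally produces the prefactor $\alpha/2$, not the stated $\alpha/4$. To write the estimator in the stated form $\frac{\alpha}{4}\bS\hv$, I would set $\hv=2\tilde{\bv}$. The identity then holds exactly, but $\hat v^{(i)}$ equals \emph{twice} a quadratic form of the Hessian at a point of the segment. It is not literally of the form $\si^\top\nabla^2 f(\x_{\si};\xi)\si$, since I do not see a way to choose a point on the segment whose Hessian quadratic form is doubled. I would first check whether some other remainder representation removes this factor; I expect none does.

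\textbf{Location of $\x_{\si}$.} The natural intermediate point lies between $\x$ and $\x+\alpha\si$, not between $\x$ and $\x+\si$ as the statement says. I would read the statement's segment as a typo and work with $\x+\theta_i\alpha\si$.

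For the norm bound, Assumption~\ref{ass:hL} gives $|\si^\top\nabla^2 f(\x_{\si};\xi)\si|\le\hL\norm{\si}^2$. Hence $\norm{\tilde{\bv}}^2\le\hL^2\sum_i\norm{\si}^4\le\hL^2\hzt_1$ on the event $\hE_{t,1}$ of Eq.~\eqref{eq:E11}. So the stated bound $\norm{\hv}^2\le\hL^2\hzt_1$ holds for $\tilde{\bv}$, i.e.\ with prefactor $\alpha/2$. With the rescaling $\hv=2\tilde{\bv}$ needed for the $\alpha/4$ form, the same argument only gives $\norm{\hv}^2\le4\hL^2\hzt_1$.

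Consequently, I can establish exactly one of the two readings: the identity with $\alpha/2$ and the sharp bound $\hL^2\hzt_1$, or the identity with $\alpha/4$ and the bound $4\hL^2\hzt_1$. I would record which one the later lemmas use. The extra factor $4$ only affects absolute constants downstream.
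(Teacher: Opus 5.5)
Your argument is essentially the paper's: a Lagrange-form Taylor expansion of each $f(\x+\alpha\si;\xi)$, the bound $|\si^\top\nabla^2 f(\x_{\si};\xi)\si|\le\hL\norm{\si}^2$, and the event $\hE_{t,1}$ (the segment condition you assume is automatic, since $\cK$ is a Euclidean ball and hence convex). Your diagnosis of the prefactor is also correct: the paper's proof obtains $\frac{\alpha}{2}\sum_{i}(\si^\top\nabla^2 f(\x_{\si};\xi)\si)\si$ and then silently rewrites it as $\frac{\alpha}{4}\bS\hv$, which is false as stated (for $f(\x;\xi)=\frac{1}{2}\x^\top\bA\x$ it would force $\sum_i(\si^\top\bA\si)\si=0$), so the valid statement is your $\alpha/2$ identity with $\norm{\hv}^2\le\hL^2\hzt_1$, or equivalently $\alpha/4$ with the bound $4\hL^2\hzt_1$, which changes only absolute constants downstream.
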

\begin{proof}
First, by the Taylor's expansion, we can obtain that
	\begin{align*}
		f(\x+\alpha \si;\;\xi) = f(\x ;\;\xi) + \alpha \dotprod{\nabla f(\x;\;\xi), \si} + \frac{\alpha^2}{2} \si^\top \nabla^2 f(\x_{\si};\;\xi) \si.
	\end{align*}
	Thus, we have
	\begin{align*}
		&\sum_{i=1}^{\ell}\frac{f(\x+\alpha \si;\;\xi) - f(\x;\;\xi)}{\alpha}\si \\
		=& 
		\sum_{i=1}^{\ell} \frac{\alpha \dotprod{\nabla f(\x;\;\xi), \si} + \frac{\alpha^2}{2} \si^\top \nabla^2 f(\x_{\si};\;\xi) \si }{\alpha} \si\\
		=&
		\sum_{i=1}^{\ell} \dotprod{\nabla f(\x;\;\xi), \si} \si 
		+ \frac{\alpha }{2} \sum_{i=1}^{\ell} \left( \si^\top \nabla^2 f(\x_{\si};\;\xi) \si  \right) \si \\
		=& \bS\bS^\top \nabla f(\x;\;\xi) + \frac{\alpha}{4} \bS \hv
	\end{align*}

	Furthermore, by the definition of $\hv$, we can obtain that
	\begin{align*}
		|\hv^{(i)}| = |\si^\top \nabla^2 f(\x_{\si};\;\xi) \si | \leq \hL \norm{\si}^2,
	\end{align*}
	where the last inequality is because of Assumption~\ref{ass:hL}.
	
	Therefore,
	\begin{align*}
		\norm{\hv}^2 = \sum_{i=1}^{\ell} |\hv^{(i)}|^2 \leq \hL^2 \sum_{i=1}^{\ell} \norm{\si}^4 \leq \hL^2 \hzt_1,
	\end{align*}
	where the last inequality is because of the condition that event $\hE_{t,1}$ holds. 
\end{proof}

\begin{lemma}\label{lem:sig1}
Letting $\sigma_t$ be defined in Eq.~\eqref{eq:sig1}, then it holds that
	\begin{align*}
		\sigma_t^2 
		\leq 
		\frac{3\alpha^2}{2(\ell-1)} \sum_{i=1}^{\ell} \dotprod{ \nabla f(\x_t;\;\xi_t), \bm{s}_i }^2 
		+ \frac{3}{\ell - 1} 	\sum_{i=1}^{\ell}  \hgam(\x_t, \alpha \bm{s}_j)^2,
	\end{align*}
	and
	\begin{align*}
		\sigma_t^2 
		\geq 
		\frac{\alpha^2}{2(\ell-1)} \sum_{i=1}^{\ell} \left(\dotprod{ \nabla f(\x_t;\;\xi_t), \bm{s}_i }^2  - \frac{1}{\ell^2}  \left( \sum_{i=1}^{\ell} \dotprod{\nabla f(\x_t;\;\xi_t), \si} \right)^2 \right) - \frac{1}{\ell - 1} 	\sum_{i=1}^{\ell}  \hgam(\x_t, \alpha \bm{s}_j)^2,
	\end{align*}
	where we define 
	\begin{equation}\label{eq:gamma1}
		\hgam(\x, \alpha \si) = f(\x + \alpha \si;\xi) - f(\x;\xi) - \alpha \dotprod{\nabla f(\x;\xi), \si}.
	\end{equation} 
\end{lemma}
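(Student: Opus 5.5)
This is the stochastic analogue of Lemma~\ref{lem:sig}, and I would prove it by repeating that argument verbatim with $f(\cdot)$ replaced by $f(\cdot;\xi_t)$. The only structural input is the identity defining $\hgam$ in Eq.~\eqref{eq:gamma1}, which plays the role of Eq.~\eqref{eq:gamma}. No smoothness assumption is needed, since the claim is purely algebraic.

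\textbf{Step 1: rewrite each centered term.} Write $f(\x_t+\alpha\bm{s}_j;\xi_t)=f(\x_t;\xi_t)+\alpha\dotprod{\nabla f(\x_t;\xi_t),\bm{s}_j}+\hgam(\x_t,\alpha\bm{s}_j)$. Subtracting the sample mean cancels the common constant $f(\x_t;\xi_t)$, so
\begin{align*}
f(\x_t+\alpha\bm{s}_j;\xi_t)-\frac{1}{\ell}\sum_{i=1}^{\ell} f(\x_t+\alpha\si;\xi_t)=\alpha\dotprod{\nabla f(\x_t;\xi_t),\bm{s}_j-\frac{1}{\ell}\sum_{i=1}^{\ell}\si}+\hat\tau_j,
\end{align*}
where $\hat\tau_j=\hgam(\x_t,\alpha\bm{s}_j)-\frac{1}{\ell}\sum_{i=1}^{\ell}\hgam(\x_t,\alpha\si)$.

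\textbf{Step 2: two-sided bound on each square.} Expand the square and apply Young's inequality $2|ab|\le a^2/2+2b^2$ to the cross term. This gives the analogues of Eq.~\eqref{eq:ll} and Eq.~\eqref{eq:gg}: each squared centered value is at most $\frac{3\alpha^2}{2}\langle\cdot\rangle^2+3\hat\tau_j^2$ and at least $\frac{\alpha^2}{2}\langle\cdot\rangle^2-\hat\tau_j^2$.

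\textbf{Step 3: evaluate the sums.} Summing over $j$, the computation in Eq.~\eqref{eq:exp} with $\nabla f(\x_t;\xi_t)$ in place of $\nabla f(\x)$ gives
\begin{align*}
\sum_{j=1}^{\ell}\dotprod{\nabla f(\x_t;\xi_t),\bm{s}_j-\frac{1}{\ell}\sum_{i=1}^{\ell}\si}^2=\sum_{i=1}^{\ell}\dotprod{\nabla f(\x_t;\xi_t),\si}^2-\frac{1}{\ell}\Big(\sum_{i=1}^{\ell}\dotprod{\nabla f(\x_t;\xi_t),\si}\Big)^2 .
\end{align*}
Since centering cannot increase the sum of squares, as in Eq.~\eqref{eq:tau}, we also have $0\le\sum_j\hat\tau_j^2\le\sum_j\hgam(\x_t,\alpha\bm{s}_j)^2$.

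\textbf{Step 4: assemble.} Divide by $\ell-1$. For the upper bound, use $\sum_j\hat\tau_j^2\le\sum_j\hgam^2$ and drop the nonpositive mean-square correction. For the lower bound, use the same inequality with a minus sign. In the lower bound, the statement writes the correction as $\frac{1}{\ell^2}(\sum_i\cdot)^2$ inside a sum over $i$; this equals the $\frac1\ell(\sum_i\cdot)^2$ that arises, so it matches the form of Eq.~\eqref{eq:sig_low0} exactly.

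There is no real obstacle here. The only care needed is the bookkeeping of that mean-correction term: dropping it is legitimate in the upper bound (it is subtracted) and keeping it exactly in the lower bound.
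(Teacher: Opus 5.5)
Your proposal is correct and takes the same approach as the paper, whose proof of this lemma simply says it is the same as that of Lemma~\ref{lem:sig}. You carry out that argument with $f(\cdot;\xi_t)$ in place of $f$: the linear-plus-centered-remainder decomposition, Young's inequality $2|ab|\le a^2/2+2b^2$, the centered-sum identity, the bound $\sum_j\hat\tau_j^2\le\sum_j\hat\gamma(\x_t,\alpha\bm{s}_j)^2$, and correct handling of the $\frac{1}{\ell^2}$ versus $\frac{1}{\ell}$ mean-correction term.
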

\begin{proof}
The proof is the same as that of Lemma~\ref{lem:sig}.
\end{proof}

\begin{lemma}\label{lem:gamma1}
Let $\hgam(\x, \alpha \si)$ be defined in Eq.~\eqref{eq:gamma1}. Conditioned on event $\hE_{t,1}$ and Assumption~\ref{ass:hL} with all $\x$ and $\x + \alpha \si$ belonging to $\cK$, then	it holds that
	\begin{align*}
		\sum_{i=1}^{\ell} \hgam(\x, \alpha \si)^2 
		\leq 
		\frac{\alpha^4 \hL^2 \hzt_1 }{4},
	\end{align*}
	where $\hzt_1$ is defined in Eq.~\eqref{eq:hzt}.
\end{lemma}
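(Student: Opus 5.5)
The bound follows from a second-order Taylor expansion of each sampled function $f(\cdot;\xi)$ along the segment $[\x,\x+\alpha\si]$, combined with the Hessian bound of Assumption~\ref{ass:hL} and the moment control supplied by event $\hE_{t,1}$. This mirrors the proof of Lemma~\ref{lem:gamma}, with the $(L_0,L_1)$-smoothness inequality replaced by the uniform Hessian bound $\hL$ on $\cK$.

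First, fix $i$. Since $f(\cdot;\xi)$ is twice differentiable, the mean-value (Lagrange) form of Taylor's theorem, as already used in Lemma~\ref{lem:hg_prop}, gives a point $\x_{\si}$ on the segment between $\x$ and $\x+\alpha\si$ with
\begin{align*}
\hgam(\x,\alpha\si) = f(\x+\alpha\si;\xi) - f(\x;\xi) - \alpha\dotprod{\nabla f(\x;\xi),\si} = \frac{\alpha^2}{2}\,\si^\top \nabla^2 f(\x_{\si};\xi)\,\si .
\end{align*}
The main point needing care is that $\x_{\si}\in\cK$, so that Assumption~\ref{ass:hL} applies. Both endpoints $\x$ and $\x+\alpha\si$ lie in $\cK$ by hypothesis. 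The set $\cK$ is a Euclidean ball and hence convex, so the whole segment, and in particular $\x_{\si}$, lies in $\cK$. Therefore $\norm{\nabla^2 f(\x_{\si};\xi)}\le\hL$, and
\begin{align*}
|\hgam(\x,\alpha\si)| \le \frac{\alpha^2}{2}\,\hL\,\norm{\si}^2 .
\end{align*}

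Next, square this bound and sum over $i=1,\dots,\ell$:
\begin{align*}
\sum_{i=1}^{\ell}\hgam(\x,\alpha\si)^2 \le \frac{\alpha^4\hL^2}{4}\sum_{i=1}^{\ell}\norm{\si}^4 .
\end{align*}
On event $\hE_{t,1}$ defined in Eq.~\eqref{eq:E11}, $\sum_{i}\norm{\si}^4\le\hzt_1$. Substituting gives the claimed bound $\alpha^4\hL^2\hzt_1/4$.

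The argument has no real obstacle. The only nonroutine ingredient is the convexity argument placing the intermediate point inside $\cK$; everything else is a direct substitution into the event bound.
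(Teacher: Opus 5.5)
Your proposal is correct and follows the paper's proof essentially step for step: a Lagrange-form Taylor expansion writes $\hgam(\x,\alpha\si)$ as $\frac{\alpha^2}{2}\si^\top \nabla^2 f(\cdot;\xi)\si$, Assumption~\ref{ass:hL} gives the bound $\hL\norm{\si}^2$ on the quadratic form, and summing the squares and invoking event $\hE_{t,1}$ finishes the proof. You also make explicit a step the paper leaves implicit, namely that the intermediate point lies in $\cK$ because $\cK$ is a Euclidean ball and hence convex, and you correctly write the last step as an inequality where the paper writes an equality.
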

\begin{proof}
	By the definition of $\hat{\gamma}(\cdot,\cdot)$ in Eq.~\eqref{eq:gamma1}, we have
	\begin{align*}
		\hgam(\x, \alpha \si) = f(\x + \alpha \si; \xi) - f(\x;\xi) - \alpha \dotprod{\nabla f(\x;\xi), \si} = \frac{\alpha^2}{2} \si^\top \nabla^2 f(\x_{\si}';\xi) \si
	\end{align*}
	where $\xi_{\si}'$ is a point lying the line between $\x$ and $\x + \alpha \si$.
	By Assumption~\ref{ass:hL}, we can obtain that
	\begin{align*}
		|\si^\top \nabla^2 f(\x_{\si}') \si| \leq \hL \norm{\si}^2.
	\end{align*}
	Therefore, we can obtain that
	\begin{align*}
		\sum_{i=1}^{\ell} \hgam(\x, \alpha \si)^2 
	\leq 
	\frac{\alpha^4 \hL^2}{4} \sum_{i=1}^{\ell} \norm{\si}^4 = \frac{\alpha^4 \hL^2 \hzt_1 }{4}. 
	\end{align*}
\end{proof}

\begin{lemma}\label{lem:hsig}
	Assuming that $\beta = {\alpha \nu}/{(4\sqrt{\ell - 1})}$ and  $\alpha^2 \le {\nu^2}/{(8\hL^2 \hzt_1)}$ with $\hzt_1$ defined in Eq.~\eqref{eq:hzt},  conditioned on event $\hE_{t,3}$ defined in Eq.~\eqref{eq:E31}, then it holds that
	\begin{align}
		\sigma_t + \beta \geq& \sqrt{\frac{\alpha^2}{8(\ell - 1)}  \cdot \norm{\nabla f(\x_t;\;\xi_t)}^2  + \frac{\beta^2}{2}},\label{eq:sig_low1}\\
		\sigma_t \leq& \frac{3\alpha}{2\sqrt{\ell-1}}\norm{\nabla f(\x_t;\;\xi_t)} + \sqrt{ \frac{3\alpha^4 \hL^2\hzt_1}{4(\ell - 1)} }. \label{eq:sig_up1}
	\end{align}
\end{lemma}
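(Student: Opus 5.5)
The plan is to start from the two-sided bounds of Lemma~\ref{lem:sig1} and control the remainder through Lemma~\ref{lem:gamma1}, which gives $\sum_{i}\hgam(\x_t,\alpha\si)^2\le \alpha^4\hL^2\hzt_1/4$. This presumes that event $\hE_{t,1}$ and the Assumption~\ref{ass:hL} localization hold; I will invoke them implicitly, as the paper does elsewhere.

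\textbf{Upper bound \eqref{eq:sig_up1}.} First, from Lemma~\ref{lem:sig1},
\begin{align*}
\sigma_t^2\le \frac{3\alpha^2}{2(\ell-1)}\norm{\bS_t^\top\nabla f(\x_t;\xi_t)}^2+\frac{3\alpha^4\hL^2\hzt_1}{4(\ell-1)} .
\end{align*}
Next, I bound $\norm{\bS_t^\top\nabla f(\x_t;\xi_t)}^2\le(5/4+1/k)\norm{\nabla f(\x_t;\xi_t)}^2\le \tfrac32\norm{\nabla f(\x_t;\xi_t)}^2$ via $\hE_{t,2}$. The first term is then at most $\tfrac94\alpha^2\norm{\nabla f(\x_t;\xi_t)}^2/(\ell-1)$. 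Finally, $\sqrt{a+b}\le\sqrt a+\sqrt b$ gives exactly the stated $\frac{3\alpha}{2\sqrt{\ell-1}}\norm{\nabla f(\x_t;\xi_t)}+\sqrt{3\alpha^4\hL^2\hzt_1/(4(\ell-1))}$.

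\textbf{Lower bound \eqref{eq:sig_low1}.} Mirroring the deterministic lemma giving \eqref{eq:sig_low}, I use $\hE_{t,3}$ to replace $\frac{1}{\ell}(\sum_i\dotprod{\nabla f(\x_t;\xi_t),\si})^2$ by $\frac14\norm{\nabla f(\x_t;\xi_t)}^2$. I then use $\hE_{t,2}$ with $k\ge4$ to get $\sum_i\dotprod{\nabla f,\si}^2\ge\frac12\norm{\nabla f}^2$. Together these yield
\begin{align*}
\sigma_t^2\ge\frac{\alpha^2}{8(\ell-1)}\norm{\nabla f(\x_t;\xi_t)}^2-\frac{\alpha^4\hL^2\hzt_1}{4(\ell-1)} .
\end{align*}
The choice $\alpha^2\le\nu^2/(8\hL^2\hzt_1)$ makes the negative term at most $\alpha^2\nu^2/(32(\ell-1))=\beta^2/2$, since $\beta^2=\alpha^2\nu^2/(16(\ell-1))$. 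Hence $\sigma_t^2\ge A-\beta^2/2$, where $A:=\frac{\alpha^2}{8(\ell-1)}\norm{\nabla f(\x_t;\xi_t)}^2$.

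\textbf{Combining into $\sigma_t+\beta$.} This is the one nontrivial step, because we need $(\sigma_t+\beta)^2\ge A+\beta^2/2$ and not just a bound on $\sigma_t^2$. I would expand $(\sigma_t+\beta)^2=\sigma_t^2+2\sigma_t\beta+\beta^2\ge\sigma_t^2+\beta^2$, using $\sigma_t\ge0$. If $\sigma_t^2\ge A-\beta^2/2$, this gives $(\sigma_t+\beta)^2\ge A+\beta^2/2$. When $A<\beta^2/2$ the bound on $\sigma_t^2$ is vacuous, but the inequality still holds since $\sigma_t^2\ge0$ gives $(\sigma_t+\beta)^2\ge\beta^2\ge A+\beta^2/2$. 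So no case split is really needed. Taking square roots finishes \eqref{eq:sig_low1}.

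The main point to be careful about is that the argument requires $\hE_{t,2}$ (and $\hE_{t,1}$) in addition to $\hE_{t,3}$, even though the statement mentions only $\hE_{t,3}$. I will state these extra conditions explicitly in the proof.
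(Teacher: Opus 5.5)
Your proposal is correct and follows the paper's proof essentially step for step. For the lower bound you combine Lemma~\ref{lem:sig1} with $\hE_{t,3}$, $\hE_{t,2}$ and Lemma~\ref{lem:gamma1}, then use $(\sigma_t+\beta)^2\ge\sigma_t^2+\beta^2$ together with $\alpha^4\hL^2\hzt_1/(4(\ell-1))\le\beta^2/2$; for the upper bound you use $\hE_{t,2}$ and $\sqrt{a+b}\le\sqrt{a}+\sqrt{b}$, exactly as the paper does. Your remark that $\hE_{t,1}$, $\hE_{t,2}$ and the localization in Assumption~\ref{ass:hL} are needed beyond $\hE_{t,3}$ is accurate: the paper's own proof also relies on them (it cites Eq.~\eqref{eq:E21} and Lemma~\ref{lem:gamma1}) without listing them in the statement.
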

\begin{proof}
	Combining Lemma~\ref{lem:sig1} with  event $\hE_{t,3}$, we can obtain that 
	\begin{align*}
		\sigma_t^2 
		\geq
		\frac{\alpha^2}{2(\ell-1)} \cdot \frac{\norm{\nabla f(\x_t;\;\xi_t)}^2}{4} - \frac{1}{\ell - 1} 	\sum_{i=1}^{\ell}  \hgam(\x, \alpha \bm{s}_j)^2.
	\end{align*}
	
	By Lemma~\ref{lem:gamma1}, we can obtain that
	\begin{align*}
		\sigma_t^2 
		\geq&
		\frac{\alpha^2}{8(\ell - 1)}  \cdot \norm{\nabla f(\x_t;\;\xi_t)}^2 - \frac{\alpha^4 \hL^2  \hzt_1}{4(\ell-1)}.
	\end{align*}
	Furthermore,
	\begin{align*}
	(\sigma_t + \beta)^2 
	\geq& 
	\sigma_t^2 + \beta^2  
	\ge 
	\frac{\alpha^2}{8(\ell - 1)}  \cdot \norm{\nabla f(\x_t;\;\xi_t)}^2 - \frac{\alpha^4 \hL^2  \hzt_1}{4(\ell-1)} + \beta^2\\
	=&
	\frac{\alpha^2}{8(\ell - 1)}  \cdot \norm{\nabla f(\x_t;\;\xi_t)}^2 
	- \frac{\alpha^4 \hL^2  \hzt_1}{4(\ell-1)} 
	+ \frac{\alpha^2 \nu^2}{16(\ell-1)}\\
	\geq& 
	\frac{\alpha^2}{8(\ell - 1)}  \cdot \norm{\nabla f(\x_t;\;\xi_t)}^2  + \frac{\beta^2}{2}, 
	\end{align*}
	where the last inequality is because of $\alpha^2 \le \frac{\nu^2}{8\hL^2 \hzt_1}$
	
	Similarly, combining Lemma~\ref{lem:sig1} with Lemma~\ref{lem:gamma1},   we have
	\begin{align*}
		\sigma_t^2 
		\leq& 
		\frac{3\alpha^2}{2(\ell-1)} \sum_{i=1}^{\ell} \dotprod{ \nabla f(\x_t;\;\xi_t), \bm{s}_i }^2 
		+ \frac{3}{\ell - 1} 	\sum_{i=1}^{\ell}  \hgam(\x_t, \alpha \bm{s}_j)^2\\
		\stackrel{\eqref{eq:E21}}{\leq}&
		\frac{9\alpha^2}{4(\ell-1)}\norm{\nabla f(\x_t;\;\xi_t)}^2
		+ \frac{3}{\ell - 1} 	\sum_{i=1}^{\ell}  \hgam(\x_t, \alpha \bm{s}_j)^2\\
		\leq&
		\frac{9\alpha^2}{4(\ell-1)}\norm{\nabla f(\x_t;\;\xi_t)}^2
		+ \frac{3\alpha^4 \hL^2\hzt_1}{4(\ell - 1)},
	\end{align*}
	which implies that
	\begin{align*}
		\sigma_t
		\leq 
		\frac{3\alpha}{2\sqrt{\ell-1}}\norm{\nabla f(\x_t;\;\xi_t)} + \sqrt{ \frac{3\alpha^4 \hL^2\hzt_1}{4(\ell - 1)} }.
	\end{align*}
\end{proof}

\begin{lemma}
Let events $\hE_{t,1}$, $\hE_{t,2}$, and $\cE_{t,3}$ hold. 
Setting $\beta = \frac{\alpha \nu}{4\sqrt{\ell - 1}}$ and  $\alpha^2 \le \frac{\nu^2}{8\hL^2 \hzt_1}$, then it holds that
	\begin{equation}\label{eq:up}
		\frac{\norm{\hg(\x_t)}_{\bH}}{\sigma_t + \beta } 
		\leq 
		\frac{2\sqrt{6\zeta_2(\ell-1)}}{\alpha} + \frac{2 \hL \sqrt{(\ell-1)\zeta_2\hzt_1}}{\nu}.
	\end{equation}
\end{lemma}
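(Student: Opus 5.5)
We mirror the deterministic bound \eqref{eq:length}, now using the decomposition of Lemma~\ref{lem:hg_prop} and the lower bound \eqref{eq:sig_low1} on $\sigma_t+\beta$. Write $\hg(\x_t) = \bS\bS^\top\nabla f(\x_t;\xi_t) + \frac{\alpha}{4}\bS\hv$ with $\norm{\hv}^2\le \hL^2\hzt_1$. Then apply the triangle inequality in the $\bH$-norm together with $\norm{\bS\bm{u}}_{\bH}^2 = \bm{u}^\top \bS^\top\bH\bS\bm{u} \le \norm{\bS^\top\bH\bS}\norm{\bm{u}}^2$. On the event $\norm{\bS_t^\top\bH\bS_t}\le\zeta_2$ (the event written as $\cE_{t,3}$ in the statement, holding by Lemma~\ref{lem:E3}), this gives
\begin{align*}
\norm{\hg(\x_t)}_{\bH} \le \sqrt{\zeta_2}\left(\norm{\bS^\top\nabla f(\x_t;\xi_t)} + \frac{\alpha}{4}\norm{\hv}\right) \le \sqrt{\zeta_2}\left(\sqrt{\tfrac{3}{2}}\,\norm{\nabla f(\x_t;\xi_t)} + \frac{\alpha}{4}\hL\sqrt{\hzt_1}\right),
\end{align*}
where the second step uses $\hE_{t,2}$ with $k\ge 4$, so that $5/4+1/k\le 3/2$.

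Next we split the ratio into two terms and use a different part of the lower bound for each. Since \eqref{eq:sig_low1} gives $\sigma_t+\beta \ge \sqrt{\frac{\alpha^2}{8(\ell-1)}\norm{\nabla f(\x_t;\xi_t)}^2 + \frac{\beta^2}{2}}$, we have both $\sigma_t+\beta\ge \frac{\alpha}{2\sqrt{2(\ell-1)}}\norm{\nabla f(\x_t;\xi_t)}$ and $\sigma_t+\beta\ge \beta/\sqrt2$.
\begin{itemize}
\item \textbf{Gradient term.} Using the first lower bound,
\begin{align*}
\frac{\sqrt{\zeta_2}\sqrt{3/2}\,\norm{\nabla f(\x_t;\xi_t)}}{\sigma_t+\beta} \le \frac{\sqrt{3\zeta_2/2}\cdot 2\sqrt{2(\ell-1)}}{\alpha} = \frac{2\sqrt{3\zeta_2(\ell-1)}}{\alpha},
\end{align*}
which is at most $\frac{2\sqrt{6\zeta_2(\ell-1)}}{\alpha}$.
\item \textbf{Curvature term.} Using $\sigma_t+\beta\ge\beta/\sqrt2$ with $\beta=\frac{\alpha\nu}{4\sqrt{\ell-1}}$,
\begin{align*}
\frac{\frac{\alpha}{4}\hL\sqrt{\zeta_2\hzt_1}}{\beta/\sqrt2} = \frac{\sqrt2\,\hL\sqrt{(\ell-1)\zeta_2\hzt_1}}{\nu},
\end{align*}
which is at most $\frac{2\hL\sqrt{(\ell-1)\zeta_2\hzt_1}}{\nu}$.
\end{itemize}
Summing the two terms gives \eqref{eq:up}.

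The main points to check are the following.
\begin{itemize}
\item Lemma~\ref{lem:hsig} requires $\hE_{t,3}$ (the mean-deviation event), which the statement does not list explicitly. We will either assume it is implicitly included or note that $\sigma_t+\beta\ge\beta$ trivially handles the curvature term. The gradient term genuinely needs $\hE_{t,3}$, so we will invoke it.
\item Lemma~\ref{lem:hg_prop} requires $\x_t$ and $\x_t+\alpha\si$ to lie in $\cK$. As elsewhere in this section, we assume this.
\end{itemize}
Beyond these, everything is elementary.
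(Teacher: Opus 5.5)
Your proposal is correct and follows essentially the paper's argument: apply the triangle inequality to the decomposition of Lemma~\ref{lem:hg_prop}, bound the two pieces via $\cE_{t,3}$, $\hE_{t,2}$ and $\norm{\hv}^2\le\hL^2\hzt_1$, and divide each piece by its own lower bound on $\sigma_t+\beta$. Your separate lower bounds are in fact the sound way to justify this split, since the paper's intermediate claim $\sqrt{\frac{\alpha^2}{8(\ell-1)}\norm{\nabla f(\x_t;\xi_t)}^2+\frac{\beta^2}{2}}\ge\frac{\alpha}{4\sqrt{\ell-1}}\norm{\nabla f(\x_t;\xi_t)}+\beta$ fails, e.g., when $\nabla f(\x_t;\xi_t)=0$; you are also right that $\hE_{t,3}$ and the $\cK$ condition are tacitly required. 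One caution: the Taylor remainder actually gives $\frac{\alpha}{2}\bS\hv$ with $\hat{v}^{(i)}=\si^\top\nabla^2 f(\x_{\si};\xi)\si$ (the paper's own proof of this lemma uses $\frac{\alpha}{2}\norm{\bS\hv}_{\bH}$; the $\frac{\alpha}{4}$ in Lemma~\ref{lem:hg_prop} is a slip), and with that factor your bound $\sigma_t+\beta\ge\beta/\sqrt{2}$ only gives $\frac{2\sqrt{2}\,\hL\sqrt{(\ell-1)\zeta_2\hzt_1}}{\nu}$, so use instead the trivial bound $\sigma_t+\beta\ge\beta$ (from $\sigma_t\ge 0$) that you already mention, which yields exactly $\frac{2\hL\sqrt{(\ell-1)\zeta_2\hzt_1}}{\nu}$.
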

\begin{proof}
First, we have
	\begin{align*}
		\frac{\norm{\hg(\x_t)}_{\bH}}{\sigma_t + \beta } 
		\stackrel{\eqref{eq:sig_low1}}{\leq}&
		\frac{\norm{\hg(\x_t)}_{\bH}}{\sqrt{\frac{\alpha^2}{8(\ell - 1)}  \cdot \norm{\nabla f(\x_t;\;\xi_t)}^2  + \frac{\beta^2}{2}}}
		\leq
		\frac{4\sqrt{\ell-1} \norm{\hg(\x_t)}_{\bH}}{\alpha \norm{\nabla f(\x_t;\;\xi_t)} + 4\sqrt{\ell - 1} \beta} \\
		\stackrel{\eqref{eq:hg_prop}}{\leq}&
		\frac{4\sqrt{\ell-1} \left(  \norm{\bS\bS^\top \nabla f(\x_t;\;\xi_t)}_{\bH} + \frac{\alpha}{2}\norm{\bS\hv}_{\bH}\right)}{\alpha \norm{\nabla f(\x_t;\;\xi_t)} + 4\sqrt{\ell - 1} \beta} \\
		\leq&
		\frac{\norm{\bS\bS^\top \nabla f(\x_t;\;\xi_t)}_{\bH}}{\frac{\alpha}{4\sqrt{(\ell-1)}} \norm{\nabla f(\x_t;\;\xi_t)}}
		+
		\frac{ \frac{\alpha}{2}\norm{\bS\hv}_{\bH}}{ \beta}.
	\end{align*}
	Furthermore,
	\begin{align*}
		\norm{\bS\bS^\top \nabla f(\x_t;\;\xi_t)}_{\bH}
		=& 
		\sqrt{ \nabla^\top f(\x_t;\;\xi_t) \bS\bS^\top \bH  \bS\bS^\top \nabla f(\x_t;\;\xi_t)}\\
		\stackrel{\eqref{eq:E3}}{\leq}&
		\sqrt{\zeta_2} \norm{ \bS^\top \nabla f(\x_t;\;\xi_t)} 
		\stackrel{\eqref{eq:E21}}{\leq}
		\sqrt{\frac{3\zeta_2}{2}} \norm{\nabla f(\x_t;\;\xi_t)}.
	\end{align*}
	Similarly, 
	\begin{align*}
		\norm{\bS\hv}_{\bH} 
		=
		\sqrt{ \hv^\top \bS^\top \bH \bS \hv }
		\leq
		\sqrt{\zeta_2} \norm{\hv}
		\stackrel{\eqref{eq:hg_prop}}{\leq}
		\hL \sqrt{\zeta_2\hzt_1}.
	\end{align*}
	
	Combining above results, we can obtain that
	\begin{align*}
		\frac{\norm{\hg(\x_t)}_{\bH}}{\sigma_t + \beta } 
		\le& 
		\frac{\sqrt{\frac{3\zeta_2}{2}} \norm{\nabla f(\x_t;\;\xi_t)}}{\frac{\alpha}{4\sqrt{(\ell-1)}} \norm{\nabla f(\x_t;\;\xi_t)}}
		+
		\frac{ \frac{\alpha}{2}\hL \sqrt{\zeta_2\hzt_1}}{ \beta}\\
		=&
		\frac{2\sqrt{6\zeta_2(\ell-1)}}{\alpha} + \frac{2 \hL \sqrt{(\ell-1)\zeta_2\hzt_1}}{\nu},
	\end{align*}
	where the last equality is because of $\beta = \frac{\alpha \nu}{4\sqrt{\ell - 1}}$.
\end{proof}

\subsection{Descent Analysis and Main Results}

\begin{lemma}
	Let functions $f(\x)$ and $f(\x;\;\xi)$ satisfy Assumption~\ref{ass:LL} and Assumption~\ref{ass:hL}. 
Matrix $\bS\in\RR^{d\times \ell}$ is an oblivious $(\frac{1}{4}, k, \delta)$-random sketching matrix. 
Assume that events $\hE_{t,1}$--$\hE_{t,3}$ and $\cE_{t,2}$--$\cE_{t,3}$ hold. 
Set $\beta = \frac{\alpha \nu}{4\sqrt{\ell - 1}}$,  $\alpha^2 \le \frac{\nu^2}{8\hL^2 \hzt_1}$, and the step size $\eta \leq \frac{\alpha }{ 700  \cdot \sqrt{\ell - 1} \zeta_2 L_1}$.
Then after one step update of Algorithm~\ref{alg:SA1}, it holds that
\begin{equation}\label{eq:dec}
\begin{aligned}
f(\x_{t+1})
\leq& 
f(\x_t)
- \frac{\eta\norm{\nabla f(\x_t)}^2}{ 4(\sigma_t + \beta) }
- \eta \cdot \frac{ \dotprod{\bS^\top \left(\nabla f(\x_t) - \nabla f(\x;\;\xi_t)\right), \bS^\top \nabla f(\x_t)} }{\sigma_t+\beta } \\
&
+
\frac{5\eta  \alpha^2 \hL^2 \hzt_1}{32\beta}
+ 
\frac{\eta^2 \zeta_2 (L_0 + L_1\norm{\nabla f(\x_t)})}{2}\cdot \left( \frac{48(\ell-1)}{\alpha^2} + \frac{\alpha^2 \hL^2 \hzt_1 }{8\beta^2}\right).
\end{aligned}
\end{equation}
\end{lemma}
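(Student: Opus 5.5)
We follow the deterministic descent lemma (the one giving Eq.~\eqref{eq:ff_dec}), replacing $\sigma_t$ by $\sigma_t+\beta$ and $\g$ by $\hg$.

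\textbf{Step 1: the step is admissible.} Write $\x_{t+1}-\x_t=-\eta\,\hg(\x_t)/(\sigma_t+\beta)$. By Eq.~\eqref{eq:up},
\begin{align*}
\norm{\x_{t+1}-\x_t}_{\bH}\le \eta\left(\frac{2\sqrt{6\zeta_2(\ell-1)}}{\alpha}+\frac{2\hL\sqrt{(\ell-1)\zeta_2\hzt_1}}{\nu}\right).
\end{align*}
Since $\alpha^2\le \nu^2/(8\hL^2\hzt_1)$ gives $\hL\sqrt{\hzt_1}/\nu\le 1/(2\sqrt2\,\alpha)$, the bracket is at most $c\sqrt{\zeta_2(\ell-1)}/\alpha$ for a numerical constant $c<7$. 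Then $\eta\le \alpha/(700\sqrt{\ell-1}\zeta_2L_1)$ gives a displacement of at most $1/(L_1\sqrt{\zeta_2})\le 1/(L_1\sqrt{\norm{\bH}})$. Hence Eq.~\eqref{eq:LL} applies at $\x_t$.

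\textbf{Step 2: expand.} Eq.~\eqref{eq:LL} gives
\begin{align*}
f(\x_{t+1})\le f(\x_t)-\frac{\eta}{\sigma_t+\beta}\dotprod{\nabla f(\x_t),\hg(\x_t)}+\frac{\eta^2(L_0+L_1\norm{\nabla f(\x_t)})}{2(\sigma_t+\beta)^2}\norm{\hg(\x_t)}_{\bH}^2.
\end{align*}

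\textbf{Step 3: linear term.} Insert Eq.~\eqref{eq:hg_prop} and write $\nabla f(\x_t;\xi_t)=\nabla f(\x_t)-(\nabla f(\x_t)-\nabla f(\x_t;\xi_t))$. The linear term splits into three pieces.
\begin{itemize}
\item $-\norm{\bS^\top\nabla f(\x_t)}^2$, which by $\cE_{t,2}$ is at most $-(3/4-1/k)\norm{\nabla f(\x_t)}^2\le -\tfrac12\norm{\nabla f(\x_t)}^2$.
\item The noise cross term $-\eta\dotprod{\bS^\top(\nabla f(\x_t)-\nabla f(\x_t;\xi_t)),\bS^\top\nabla f(\x_t)}/(\sigma_t+\beta)$, which is kept exactly as it appears in Eq.~\eqref{eq:dec}.
\item The bias term $-\frac{\alpha}{4}\dotprod{\bS^\top\nabla f(\x_t),\hv}$. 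By Young's inequality it is at most $\tfrac14\norm{\bS^\top\nabla f(\x_t)}^2+\tfrac{\alpha^2}{16}\norm{\hv}^2$.
\end{itemize}
The $\tfrac14$ part is absorbed, leaving $-\tfrac{1}{4}\norm{\nabla f(\x_t)}^2$ after $\cE_{t,2}$. The $\hv$ part is bounded using $\norm{\hv}^2\le\hL^2\hzt_1$ and $1/(\sigma_t+\beta)\le 1/\beta$, which yields a term of the form $\eta\alpha^2\hL^2\hzt_1/\beta$ with constant $5/32$. The constants must be split carefully so that exactly $1/4$ remains on the gradient term and $5/32$ on the bias term.

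\textbf{Step 4: quadratic term.} As in the deterministic proof, using $\cE_{t,3}$,
\begin{align*}
\norm{\hg}_{\bH}^2\le 2\zeta_2\norm{\bS^\top\nabla f(\x_t;\xi_t)}^2+\tfrac{\alpha^2}{8}\zeta_2\norm{\hv}^2 .
\end{align*}
For the first piece, $\hE_{t,2}$ bounds $\norm{\bS^\top\nabla f(\x_t;\xi_t)}^2$ by $\tfrac32\norm{\nabla f(\x_t;\xi_t)}^2$, and Eq.~\eqref{eq:sig_low1} gives $(\sigma_t+\beta)^2\ge \alpha^2\norm{\nabla f(\x_t;\xi_t)}^2/(8(\ell-1))$. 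Their ratio is therefore at most $24(\ell-1)/\alpha^2$ per factor of $2\zeta_2$, i.e.\ $48(\ell-1)\zeta_2/\alpha^2$. For the second piece, use $(\sigma_t+\beta)^2\ge\beta^2$ and $\norm{\hv}^2\le\hL^2\hzt_1$. This produces exactly the bracket in Eq.~\eqref{eq:dec}.

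\textbf{Main obstacle.} The key difficulty is that the quadratic term carries the factor $L_1\norm{\nabla f(\x_t)}$, which cannot be absorbed via $\sigma_t$ as in the deterministic case. Here $\sigma_t$ tracks the stochastic gradient, not $\nabla f(\x_t)$. The statement deliberately leaves this term unabsorbed, so no absorption is needed. The delicate parts are the constant bookkeeping in Step 3 and verifying Step 1 with the constant 700.
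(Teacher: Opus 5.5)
Your plan is the paper's own proof. You use admissibility of the step via Eq.~\eqref{eq:up} together with $\hL\sqrt{\hzt_1}/\nu\le 1/(2\sqrt{2}\,\alpha)$, then the $(L_0,L_1)$ expansion, then Young's inequality on the bias term with $\cE_{t,2}$, and finally $\cE_{t,3}$, $\hE_{t,2}$ and Eq.~\eqref{eq:sig_low1} for the quadratic term. Your constants work: you get $\tfrac{1}{16}\le\tfrac{5}{32}$ on the bias term. Your Step~4 is tidier than the paper's. You split with $\norm{u+w}^2\le 2\norm{u}^2+2\norm{w}^2$ and divide each piece by its own lower bound, $\alpha^2\norm{\nabla f(\x_t;\xi_t)}^2/(8(\ell-1))$ resp.\ $\beta^2$. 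The paper instead bounds the unsquared ratio through $\sqrt{\beta^2/2}$ and then drops a $\sqrt{2}$. In Step~3, absorb the $\tfrac14\norm{\bS^\top\nabla f(\x_t)}^2$ into $-\norm{\bS^\top\nabla f(\x_t)}^2$ \emph{before} invoking $\cE_{t,2}$, which gives $-\tfrac38\norm{\nabla f(\x_t)}^2$. Applying the lower and upper halves of $\cE_{t,2}$ to the two pieces separately would leave only $-\tfrac18$.

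There is a genuine error in Step~3, and the paper's proof contains the same one. Your substitution $\nabla f(\x_t;\xi_t)=\nabla f(\x_t)-(\nabla f(\x_t)-\nabla f(\x_t;\xi_t))$ gives
\begin{align*}
-\frac{\eta\dotprod{\bS^\top\nabla f(\x_t),\bS^\top\nabla f(\x_t;\xi_t)}}{\sigma_t+\beta}
=-\frac{\eta\norm{\bS^\top\nabla f(\x_t)}^2}{\sigma_t+\beta}
+\eta\cdot\frac{\dotprod{\bS^\top(\nabla f(\x_t)-\nabla f(\x_t;\xi_t)),\bS^\top\nabla f(\x_t)}}{\sigma_t+\beta}.
\end{align*}
So the noise cross term enters with a \emph{plus} sign, not ``exactly as it appears in Eq.~\eqref{eq:dec}''. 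In the paper, the intermediate line $-\dotprod{\bS^\top\nabla f(\x_t),\bS^\top(\nabla f(\x_t;\xi_t)-\nabla f(\x_t))}$ is correct, and the sign flips only in the concluding display.

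This is not cosmetic: with the minus sign the displayed inequality is false in general. Take $f(\x;\xi)=\dotprod{\bm{c}_\xi,\x}$ and a draw with $\bm{c}_{\xi_t}=-\nabla f(\x_t)$. Then $\hv=\bm{0}$ and $f(\x_{t+1})-f(\x_t)=\eta\norm{\bS^\top\nabla f(\x_t)}^2/(\sigma_t+\beta)>0$. The stated right-hand side, by contrast, is $f(\x_t)$ minus a positive multiple of $\eta$, plus terms of order $\eta\hL^2$ and $\eta^2$. For a linear function every $\hL>0$ is admissible in Assumption~\ref{ass:hL}, so small $\hL$ and small $\eta$ violate the bound.

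What your argument actually proves is Eq.~\eqref{eq:dec} with $+\eta$ in front of the cross term. That corrected version is also the one that fits its later use, where Eq.~\eqref{eq:avr} gives an \emph{upper} bound on the average of exactly this quantity.
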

\begin{proof}
First, by the update rule of Algorithm~\ref{alg:SA1},  we have
\begin{align*}
	\norm{\x_{t+1} - \x_t}_H 
	=& 
	\eta \frac{\norm{\hg(\x_t)}_{\bH}}{\sigma_t + \beta}
	\stackrel{\eqref{eq:up}}{\leq}
	\eta \left( \frac{2\sqrt{6\zeta_2(\ell-1)}}{\alpha} + \frac{2 \hL \sqrt{(\ell-1)\zeta_2\hzt_1}}{\nu} \right)\\
	=& 
	\frac{\sqrt{6}}{350L_1\sqrt{\zeta_2}}  + \frac{\alpha \hL \sqrt{\hzt_1}}{350L_1\sqrt{\zeta_2} \nu} 
	\leq \frac{\sqrt{6}}{350L_1\sqrt{\zeta_2}} + \frac{1}{350\sqrt{8}L_1\sqrt{\zeta_2}}
	\leq  \frac{1}{L_1 \sqrt{\zeta_2}} 
	\leq \frac{1}{L_1 \sqrt{\norm{\bH}}},
\end{align*}
where the second inequality is because the condition $\alpha^2 \le \frac{\nu^2}{8\hL^2 \hzt_1}$ and the last inequality is because of the definition of $\zeta_2$ in Eq.~\eqref{eq:zeta_2}.
Thus, the conditions in Assumption~\ref{ass:LL} hold.
Then, by the $(L_0,L_1)$-smoothness of $f(\x)$ and one step update of Algorithm~\ref{alg:SA1}, we have
\begin{align*}
	&f(\x_{t+1}) \\
	\stackrel{\eqref{eq:LL}}{\leq}&
	f(\x_t) + \dotprod{\nabla f(\x_t), \x_{t+1} - \x_t} + \frac{L_0 + L_1 \norm{\nabla f(\x_t)}}{2} ( \x_{t+1} - \x_t)^\top \bH  (\x_{t+1} - \x_t)\\
	=&
	f(\x_t) - \frac{\eta}{\sigma_t+\beta} \dotprod{\nabla f(\x_t), \hg(\x_t)} + \frac{\eta^2}{(\sigma_t+\beta)^2} \frac{L_0 + L_1 \norm{\nabla f(\x_t)}}{2} \hg(\x_t)^\top \bH \hg(\x_t)\\
	\stackrel{\eqref{eq:hg_prop}}{=}&
	f(\x_t) - \frac{\eta}{\sigma_t+\beta} \dotprod{\nabla f(\x_t), \bS\bS^\top \nabla f(\x_t;\;\xi_t) + \frac{\alpha}{4}\bS \hv_k} \\
	&+ \frac{\eta^2}{(\sigma_t+\beta)^2} \frac{L_0 + L_1 \norm{\nabla f(\x_t)}}{2} \left( \bS\bS^\top \nabla f(\x_t;\;\xi_t) + \frac{\alpha}{4}\bS \hv_k \right)^\top \bH \left( \bS\bS^\top \nabla f(\x_t;\;\xi_t) + \frac{\alpha}{4}\bS \hv_k \right).
\end{align*}

Furthermore,
\begin{align*}
&-  \dotprod{\nabla f(\x_t), \bS\bS^\top \nabla f(\x_t;\;\xi_t) + \frac{\alpha}{4}S \hv_k}\\
=&
-  \dotprod{\bS^\top \nabla f(\x_t),  \bS^\top \nabla f(\x_t)} 
- \dotprod{\bS^\top \nabla f(\x_t),\bS^\top  \left(\nabla f(\x_t;\;\xi_t)  - \nabla f(\x_t)\right) }
- \frac{\alpha}{4}\dotprod{ \bS^\top \nabla f(\x_t),\hv_k  } \\
\leq&
-\frac{\norm{\bS^\top \nabla f(\x_t)}^2}{2} - \dotprod{\bS^\top \nabla f(\x_t),\bS^\top  \left(\nabla f(\x_t;\;\xi_t)  - \nabla f(\x_t)\right) }
+ \frac{\alpha^2 \norm{\hv_k}^2}{8}.
\end{align*}

It also holds that
\begin{align*}
\left( \bS\bS^\top \nabla f(\x_t;\;\xi_t) + \frac{\alpha}{4}S \hv_k \right)^\top \bH \left( \bS\bS^\top \nabla f(\x_t;\;\xi_t) + \frac{\alpha}{4}S \hv_k \right)
\leq
\norm{\bS^\top \bH \bS}_2 \cdot \norm{ \bS^\top \nabla f(\x_t;\;\xi_t) + \frac{\alpha}{4} \hv_k }^2
\end{align*}

Therefore,
\begin{align*}
f(\x_{t+1})
\leq& 
f(\x_t)
- \frac{\eta\norm{\bS^\top\nabla f(\x_t)}^2}{ 2(\sigma_t+\beta) }
- \eta \cdot \frac{ \dotprod{\bS^\top \left(\nabla f(\x_t) - \nabla f(\x;\;\xi_t)\right), \bS^\top \nabla f(\x_t)} }{\sigma_t+\beta }\\
&
+ \frac{\eta\alpha^2}{8(\sigma_t+\beta)}  \norm{\hv_k}^2
+ \frac{\eta\alpha^2}{32(\sigma_t+\beta)}  \norm{\hv_k}^2\\
&
+ \frac{\eta^2}{(\sigma_t+\beta)^2} \frac{L_0 + L_1 \norm{\nabla f(\x_t)}}{2} \norm{\bS^\top \bH \bS}_2 \cdot \norm{ \bS^\top \nabla f(\x_t;\;\xi_t) + \frac{\alpha}{4} \hv_k }^2
\end{align*}

Combining Lemma~\ref{lem:hg_prop}, Lemma~\ref{lem:hsig} and event $\hE_{t,2}$ defined in Eq.~\eqref{eq:E21}, we can obtain that
\begin{align*}
&\frac{\left( \norm{ \bS^\top \nabla f(\x_t;\;\xi_t) } + \frac{\alpha}{4} \norm{\hv_k} \right) }{\sigma_t + \beta }
\stackrel{\eqref{eq:E21}\eqref{eq:sig_low1}\eqref{eq:hg_prop}}{\leq}
\frac{\sqrt{\frac{3}{2}} \norm{\nabla f(\x_t;\;\xi_t)} + \frac{\alpha \hL \sqrt{\hzt_1}}{4} }{\sqrt{\frac{\alpha^2}{8(\ell - 1)}  \cdot \norm{\nabla f(\x_t;\;\xi_t)}^2  + \frac{\beta^2}{2}}}\\
\leq&
\frac{\sqrt{\frac{3}{2}} \norm{\nabla f(\x_t;\;\xi_t)} }{\frac{\alpha \norm{\nabla f(\x_t;\;\xi_t)}}{4\sqrt{\ell-1}}}
+
\frac{\alpha \hL \sqrt{\hzt_1}}{4\beta}
= 
\frac{2\sqrt{6}\sqrt{\ell-1} }{\alpha} 
+ 
\frac{\alpha \hL \sqrt{\hzt_1}}{4\beta}.
\end{align*}

Therefore, we can obtain that
\begin{align*}
&\frac{\left( \bS\bS^\top \nabla f(\x_t;\;\xi_t) + \frac{\alpha}{4}S \hv_k \right)^\top H \left( \bS\bS^\top \nabla f(\x_t;\;\xi_t) + \frac{\alpha}{4}S \hv_k \right)
}{(\sigma_t+\beta)^2}\\
\leq& 
\zeta_2
\left( \frac{2\sqrt{6}\sqrt{\ell-1} }{\alpha} 
+ 
\frac{\alpha \hL \sqrt{\hzt_1}}{4\beta}.
 \right)^2
\leq 
\zeta_2\left( \frac{48(\ell-1)}{\alpha^2} + \frac{\alpha^2 \hL^2 \hzt_1 }{8\beta^2}\right),
\end{align*}
where the first inequality is also because of event $\cE_{t,3}$ defined in Eq.~\eqref{eq:E3}.

Furthermore,
\begin{align*}
\frac{\eta\alpha^2}{\sigma_t + \beta}  \norm{\hv_k}^2
\stackrel{\eqref{eq:hg_prop}}{\leq}
\frac{\eta\alpha^2 \hL^2 \hzt_1}{ \beta}.
\end{align*}

Therefore,
\begin{align*}
f(\x_{t+1})
\leq& 
f(\x_t)
- \frac{\eta\norm{\bS^\top\nabla f(\x_t)}^2}{ 2(\sigma_t + \beta) }
- \eta \cdot \frac{ \dotprod{\bS^\top \left(\nabla f(\x_t) - \nabla f(\x;\;\xi_t)\right), \bS^\top \nabla f(\x_t)} }{\sigma_t+\beta } \\
&
+
\frac{5\eta  \alpha^2 \hL^2 \hzt_1}{32\beta}
+ 
\frac{\eta^2 \zeta_2 (L_0 + L_1\norm{\nabla f(\x_t)})}{2}\cdot \left( \frac{48(\ell-1)}{\alpha^2} + \frac{\alpha^2 \hL^2 \hzt_1 }{8\beta^2}\right)\\
\leq&
f(\x_t)
- \frac{\eta\norm{\nabla f(\x_t)}^2}{ 4(\sigma_t + \beta) }
- \eta \cdot \frac{ \dotprod{\bS^\top \left(\nabla f(\x_t) - \nabla f(\x;\;\xi_t)\right), \bS^\top \nabla f(\x_t)} }{\sigma_t+\beta } \\
&
+
\frac{5\eta  \alpha^2 \hL^2 \hzt_1}{32\beta}
+ 
\frac{\eta^2 \zeta_2 (L_0 + L_1\norm{\nabla f(\x_t)})}{2}\cdot \left( \frac{48(\ell-1)}{\alpha^2} + \frac{\alpha^2 \hL^2 \hzt_1 }{8\beta^2}\right),
\end{align*}
where the last inequality is because of event $\cE_{t,2}$.
\end{proof}

\begin{lemma}
Let $\nabla f(\x;\;\xi)$ satisfy Assumption~\ref{ass:var}. 
Denote $\hE_t := \left(\cap_{i=1}^4 \hE_{t,i}\right) \cap \cE_{t,2} \cap \cE_{t,3}  $ which means events $\hE_{t,1}$--$\hE_{t,4}$ and $\cE_{t,2}$--$\cE_{t,3}$ hold. 
Assume $\hE_t$ hold for $t =0,\dots, T-1$. 
Setting  $\beta = \frac{\alpha \nu}{4\sqrt{\ell - 1}}$ and $\alpha^2 \le \frac{\nu^2}{8\hL^2 \hzt_1}$ with $\hzt_1$ defined in Eq.~\eqref{eq:hzt}, then with a probability at least $1-\delta$, it holds that  
	\begin{equation}\label{eq:avr}
		\frac{1}{T}\sum_{k=1}^{T}\frac{ \dotprod{\bS^\top \left(\nabla f(\x_t) - \nabla f(\x_t;\;\xi_t)\right), \bS^\top \nabla f(\x_t)} }{\sigma_t +\beta }
		\leq 
		\frac{1}{\sqrt{T}} \cdot \frac{9\sqrt{2} \sqrt{\ell - 1} \nu}{ \alpha} \sqrt{\log\frac{2}{\delta}}.
	\end{equation}
\end{lemma}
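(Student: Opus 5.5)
The plan is a martingale concentration argument, specifically Azuma--Hoeffding. Define
\[
Z_t := \frac{ \dotprod{\bS_t^\top \left(\nabla f(\x_t) - \nabla f(\x_t;\;\xi_t)\right), \bS_t^\top \nabla f(\x_t)} }{\sigma_t +\beta }.
\]
Let $\mathcal{F}_t$ be the $\sigma$-field generated by $\xi_0,\dots,\xi_{t-1}$ and $\bS_0,\dots,\bS_{t-1}$, so that $\x_t$ is $\mathcal{F}_t$-measurable. The two things needed are that $Z_t$ is (close to) a martingale difference sequence, and that $|Z_t|$ admits a deterministic bound of order $\sqrt{\ell-1}\,\nu/\alpha$ on the event $\hE_t$.

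\textbf{Step 1: bounded increments.} By Cauchy--Schwarz,
\[
|Z_t|\le \frac{\norm{\bS_t^\top(\nabla f(\x_t)-\nabla f(\x_t;\xi_t))}\,\norm{\bS_t^\top\nabla f(\x_t)}}{\sigma_t+\beta}.
\]
\begin{itemize}
\item Event $\hE_{t,4}$ together with Assumption~\ref{ass:var} gives $\norm{\bS_t^\top(\nabla f(\x_t)-\nabla f(\x_t;\xi_t))}\le\sqrt{3/2}\,\nu$.
\item Event $\cE_{t,2}$ gives $\norm{\bS_t^\top\nabla f(\x_t)}\le\sqrt{3/2}\,\norm{\nabla f(\x_t)}$, and the triangle inequality then gives $\norm{\nabla f(\x_t)}\le \norm{\nabla f(\x_t;\xi_t)}+\nu$.
\item For the denominator, Eq.~\eqref{eq:sig_low1} yields $\sigma_t+\beta\ge \frac{\alpha}{4\sqrt{\ell-1}}\norm{\nabla f(\x_t;\xi_t)}$ and also $\sigma_t+\beta\ge\beta=\frac{\alpha\nu}{4\sqrt{\ell-1}}$. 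Hence $\sigma_t+\beta\ge\frac{\alpha}{8\sqrt{\ell-1}}(\norm{\nabla f(\x_t;\xi_t)}+\nu)$.
\end{itemize}
Combining these, $|Z_t|\le \frac{3}{2}\nu\cdot\frac{8\sqrt{\ell-1}}{\alpha}=\frac{12\sqrt{\ell-1}\,\nu}{\alpha}$, or a comparable constant. Carrying the constants through, I expect the stated factor $9\sqrt2$ to come out as $\sqrt{2}\cdot c$ times the Azuma factor.

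\textbf{Step 2: conditional mean zero (the main obstacle).} The difficulty is that the denominator $\sigma_t+\beta$ depends on $\xi_t$ and $\bS_t$, so $Z_t$ is not literally mean zero. A secondary issue is that the bound from Step 1 holds only on $\hE_t$. I would handle both by working with the truncated or stopped variables $Z_t\mathbf 1_{\hE_t}$, and by arguing as the paper implicitly does: conditioned on $\mathcal{F}_t$ and on $\bS_t$, the numerator $\dotprod{\bS_t^\top(\nabla f(\x_t)-\nabla f(\x_t;\xi_t)),\bS_t^\top\nabla f(\x_t)}$ has zero mean over $\xi_t$. Treating the normalization as fixed at that level, or equivalently regarding $\sigma_t$ as computed with an independent copy of the sample, makes $Z_t$ a martingale difference sequence with respect to $\mathcal{F}_{t+1}$.

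If this decoupling is not acceptable, the fallback is to assume symmetry of the noise $\nabla f(\x;\xi)-\nabla f(\x)$ or to bound the bias term separately. For the proposal, I would follow the paper's level of rigor and treat $\EE[Z_t\mid\mathcal{F}_t]=0$.

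\textbf{Step 3: Azuma--Hoeffding.} With $|Z_t|\le c_t:=c\sqrt{\ell-1}\,\nu/\alpha$, Azuma's inequality gives
\[
\PP\left(\sum_t Z_t\ge s\right)\le \exp\left(-\frac{s^2}{2Tc^2}\right).
\]
Choosing $s=c\sqrt{2T\log(2/\delta)}$ (the two-sided version accounts for the $2$ inside the logarithm) and dividing by $T$ gives
\[
\frac{1}{T}\sum_t Z_t\le \frac{\sqrt2\,c\,\sqrt{\ell-1}\,\nu}{\alpha\sqrt T}\sqrt{\log(2/\delta)}
\]
with probability at least $1-\delta$. This matches Eq.~\eqref{eq:avr} with $c$ chosen so that $\sqrt2\,c\le 9\sqrt2$.
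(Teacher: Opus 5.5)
You are following the paper's proof essentially step for step. Both use Cauchy--Schwarz on $Z_t$, the events $\hE_{t,4}$ and $\cE_{t,2}$ for the two sketched norms, \eqref{eq:sig_low1} plus Assumption~\ref{ass:var} for the denominator, and then Azuma--Hoeffding with $B=c\sqrt{\ell-1}\,\nu/\alpha$. There are two points, and the second is the real gap.

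First, the constant. Your Step~1 is correct but gives $c=12$, so your closing claim that $\sqrt2\,c\le 9\sqrt2$ is not shown. The fix is to use the two lower bounds on $\sigma_t+\beta$ in separate regimes rather than averaging them. Let $x=\norm{\nabla f(\x_t;\xi_t)}$. If $x\le\nu/2$, use $\sigma_t+\beta\ge\beta$ together with $\norm{\nabla f(\x_t)}\le\tfrac32\nu$. If $x\ge\nu/2$, use \eqref{eq:sig_low1}, noting that $(x+\nu)^2/(x^2/8+\nu^2/32)$ is decreasing for $x\ge\nu/4$ and equals $36$ at $x=\nu/2$. In both cases $\norm{\nabla f(\x_t)}/(\sigma_t+\beta)\le 6\sqrt{\ell-1}/\alpha$. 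Hence $|Z_t|\le\tfrac32\nu\cdot 6\sqrt{\ell-1}/\alpha=9\sqrt{\ell-1}\,\nu/\alpha$, which is exactly the stated constant. Do not copy the paper's route to $9$, namely $\tfrac94\cdot 4$ via $1/(\sigma_t+\beta)\le 4\sqrt{\ell-1}/(\alpha\norm{\nabla f(\x_t)})$. After $a^2+b^2\ge(a+b)^2/2$ the denominator there should be $\alpha\norm{\nabla f(\x_t;\xi_t)}+2\sqrt{\ell-1}\,\beta$, not $+4\sqrt{\ell-1}\,\beta$, and with the correct term that bound no longer follows.

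Second, Step~2 is a genuine gap, and deferring to ``the paper's level of rigor'' does not close it. The paper's proof also just plugs $E_t=Z_t$ into Lemma~\ref{lem:concent} without checking the martingale-difference property, so the step is unproved in both. Write $Z_t=N_t/(\sigma_t+\beta)$ with $N_t=\dotprod{\bS_t^\top\bm{e}_t,\bS_t^\top\nabla f(\x_t)}$ and $\bm{e}_t=\nabla f(\x_t)-\nabla f(\x_t;\xi_t)$. In Algorithm~\ref{alg:SA1}, $\sigma_t$ is computed from $f(\x_t+\alpha\si;\xi_t)$ with the same $\xi_t$ and $\bS_t$ as $N_t$. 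So $\EE[N_t\mid\mathcal{F}_t,\bS_t]=0$ does not give $\EE[Z_t\mid\mathcal{F}_t]=0$.

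Even symmetric noise does not make the terms cancel. The map $\bm{e}_t\mapsto-\bm{e}_t$ flips the sign of $N_t$, but the realization with $N_t>0$ is, up to sketching error, the one with $\dotprod{\bm{e}_t,\nabla f(\x_t)}>0$. That realization has the smaller $\norm{\nabla f(\x_t;\xi_t)}$ and hence typically the smaller $\sigma_t$, since for small $\alpha$ and Gaussian $\bS_t$ we have $\sigma_t\approx\alpha\norm{\nabla f(\x_t;\xi_t)}/\sqrt{\ell}$. So the conditional mean is typically positive. When $\norm{\nabla f(\x_t)}$ is comparable to $\nu$ it is of the same order $\sqrt{\ell-1}\,\nu/\alpha$ as the increment bound and does not average out to $O(T^{-1/2})$.

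Your proposed workarounds do not help either:
\begin{itemize}
\item Restricting to $Z_t\1_{\hE_t}$ adds another bias, because $\hE_{t,2}$--$\hE_{t,4}$ are defined through $\nabla f(\x_t;\xi_t)$.
\item Computing $\sigma_t$ from an independent copy of the sample is a change of algorithm, not a proof step.
\end{itemize}
To make Azuma--Hoeffding applicable you would need one of two things. Either modify the method so that $\sigma_t$ uses a sample independent of $\xi_t$, and truncate only on events fixed before $\xi_t$ is drawn. Or bound $\EE[Z_t\mid\mathcal{F}_t]$ explicitly and carry that bias term into Theorem~\ref{thm:main1}.
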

\begin{proof}
	First, by Lemma~\ref{lem:hsig}, we have
	\begin{equation}\label{eq:sig_beta}
		\begin{aligned}
			\frac{1}{\sigma_t+\beta }  
			\stackrel{\eqref{eq:sig_low1}}{\leq}& 	
			\frac{1}{ \sqrt{\frac{\alpha^2}{8(\ell - 1)}  \cdot \norm{\nabla f(\x_t;\;\xi_t)}^2  + \frac{\beta^2}{2}}}
			\leq
			\frac{\sqrt{2}}{\sqrt{\frac{\alpha^2}{8(\ell - 1)}  \cdot \norm{\nabla f(\x_t;\;\xi_t)}^2} + \sqrt{ \frac{\beta^2}{2} }}\\
			=&
			\frac{4\sqrt{\ell-1}}{\alpha \norm{\nabla f(\x_t;\;\xi_t)} + 4\sqrt{\ell - 1} \beta} 
			\leq 
			\frac{4\sqrt{\ell-1}}{\alpha    (\norm{\nabla f(\x_t)} - \nu + 4\sqrt{\ell - 1} \beta/\alpha) }
			=
			\frac{4\sqrt{\ell-1}}{\alpha    \norm{\nabla f(\x_t)} },
		\end{aligned}
	\end{equation}
	where the last equality is because of $\beta = \frac{\alpha \nu}{4\sqrt{\ell - 1}}$, the second inequality is because of the fact that $a^2 + b^2 \geq \frac{(a+b)^2}{2}$ for all $a, b\in\RR$, and the last inequality is because of Assumption~\ref{ass:var}.
	
	Then, we have
	\begin{align*}
		&\left|
		\frac{ \dotprod{\bS^\top \left(\nabla f(\x_t) - \nabla f(\x_t;\;\xi_t)\right), \bS^\top \nabla f(\x_t)} }{\sigma_t +\beta  }
		\right|\\
		\leq& 
		\frac{\norm{\bS^\top \left(\nabla f(\x_t) - \nabla f(\x_t;\;\xi_t)\right)  } \cdot \norm{ \bS^\top \nabla f(\x_t)} }{\sigma_t+\beta}\\
		\stackrel{\eqref{eq:E2}\eqref{eq:E41}}{\leq}&
		\frac{9\norm{\nabla f(\x_t) - \nabla f(\x_t;\;\xi_t)} \norm{\nabla f(\x_t)}}{4 (\sigma_t+\beta)}\\
		\stackrel{\eqref{eq:sig_beta}}{\leq}&
		\frac{9\sqrt{\ell - 1} \norm{\nabla f(\x_t) - \nabla f(\x_t;\;\xi_t)}}{\alpha }
		\leq 
		\frac{9\sqrt{\ell - 1} \nu}{\alpha},
	\end{align*}
	where the last inequality is because of Assumption~\ref{ass:var}. 
	
	By the Azuma-Hoeffding Inequality in Lemma~\ref{lem:concent} with $E_t = \frac{ \dotprod{\bS^\top \left(\nabla f(\x_t) - \nabla f(\x_t;\;\xi_t)\right), \bS^\top \nabla f(\x_t)} }{\sigma_t +\beta  }$, $B = \frac{9\sqrt{\ell - 1} \nu}{\alpha}$ and $\zeta =  \frac{1}{\sqrt{T}} \cdot \frac{9\sqrt{2} \sqrt{\ell - 1} \nu}{ \alpha} \sqrt{\log\frac{2}{\delta}}$, we can obtain that
	\begin{align*}
		\PP\left[\frac{1}{T}\sum_{t=0}^{T-1}\frac{ \dotprod{\bS^\top \left(\nabla f(\x_t) - \nabla f(\x;\;\xi_t)\right), \bS^\top \nabla f(\x_t)} }{\sigma_t +\beta } \geq \zeta\right]
		\leq
		\exp\left( - \frac{T \zeta^2}{2  \left(\frac{9\sqrt{\ell - 1} \nu}{\alpha} \right)^2} \right) = \delta,
	\end{align*}
	which concludes the proof.
\end{proof}

\begin{theorem}\label{thm:main1}
Let functions $f(\x)$ and $f(\x;\;\xi)$ satisfy Assumption~\ref{ass:LL}--\ref{ass:hL}. 
Matrix $\bS_t\in\RR^{d\times \ell}$ is an oblivious $({1}/{4}, k, \delta)$-random sketching matrix. 
Denote $\hE_t := \big(\cap_{i=1}^4 \hE_{t,i}\big) \cap \cE_{t,2} \cap \cE_{t,3} $ which means events $\hE_{t,1}$, $\hE_{t,1}$, $\hE_{t,4}$, and $\hE_{t,4}$ (defined in Eq.~\eqref{eq:E11}-Eq.~\eqref{eq:E41}) and $\cE_{t,2}$ and $\cE_{t,3}$ (defined in Eq.~\eqref{eq:E2}-Eq.~\eqref{eq:E3}) hold. 
Setting parameters of Algorithm~\ref{alg:SA1} as follow:
\begin{equation}
	\alpha = \min\left\{\frac{1}{T^{1/2}}, \sqrt{ \frac{\nu^2}{8\hL^2 \hzt_1} }\right\},\qquad \eta = \frac{\alpha }{ 700 \sqrt{T} \cdot \sqrt{\ell - 1} \zeta_2 L_1},\qquad\mbox{and}\qquad \beta = \frac{\alpha \nu}{4\sqrt{\ell - 1}},
\end{equation} 
then the sequence $\{\x_t\}$ with $t = 0,\dots, T-1$ generated by Algorithm~\ref{alg:SA1} satisfies the following property with a probability at least $1 - \delta$ and $0<\delta<1$,
\begin{equation}\label{eq:main1}
\begin{aligned}
\min_{t=0,\dots, T-1} &\norm{\nabla f(\x_t)} 
\leq \max\left\{\frac{1}{T^{1/4}}, \right.\\
&\left.\; 
\frac{28\cdot 700\cdot \nu \sqrt{\zeta_2} L_1 \cdot (f(\x_0) - f(\x^*))}{T^{1/4} }
+ \frac{252\sqrt{2}\nu^2}{ T^{1/4}}\sqrt{\log\frac{2}{\delta}} 
+ \frac{35   \hL^2\hzt_1 }{2 T^{1/4} }
+ \frac{  L_0 \nu \sqrt{\zeta_2}}{T^{1/4}L_1}
\right\}.
\end{aligned}
\end{equation}
Furthermore, it holds that
\begin{equation}\label{eq:P2}
\PP\left( \cap_{t=0}^{T-1}\; \hE_t \right) \geq 1 - 6T\delta.
\end{equation}
\end{theorem}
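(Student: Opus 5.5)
We follow the template of Theorem~\ref{thm:main}: sum the one-step descent inequality \eqref{eq:dec} over $t=0,\dots,T-1$, control the martingale cross term with \eqref{eq:avr}, and use a case split on the gradient norm to handle the $L_1\norm{\nabla f(\x_t)}$ factor and the $1/(\sigma_t+\beta)$ weighting. Before summing, we check that \eqref{eq:dec} applies at every step. This needs three things. First, the chosen $\eta=\alpha/(700\sqrt{T}\sqrt{\ell-1}\zeta_2L_1)$ satisfies the step-size condition $\eta\le \alpha/(700\sqrt{\ell-1}\zeta_2L_1)$. Second, $\alpha^2\le \nu^2/(8\hL^2\hzt_1)$ holds by construction. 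Third, all iterates and query points stay in $\cK$, so that Assumption~\ref{ass:hL} (hence Lemma~\ref{lem:hg_prop} and Lemma~\ref{lem:gamma1}) is valid. For the third point, the displacement bound in the proof of \eqref{eq:dec} gives $\norm{\x_{t+1}-\x_t}_{\bH}\lesssim 1/(350\sqrt{T}L_1\sqrt{\zeta_2})$ once the extra $1/\sqrt{T}$ in $\eta$ is included. Summing over $T$ steps gives the radius $3\sqrt{T}/(350L_1\sqrt{\zeta_2})$ defining $\cK$, plus the $\alpha\norm{\si}$ perturbation. This step needs care because $\cK$ is stated in the Euclidean norm while we control the $\bH$-norm, so we will likely have to interpret the displacement through $\zeta_2$ or assume $\bH\succeq$ a suitable multiple of the identity. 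We flag this as a gap to patch rather than resolve fully.

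Next comes the case split. If $\norm{\nabla f(\x_t)}\le T^{-1/4}$ for some $t$, the first branch of the max holds and we are done. Otherwise $\norm{\nabla f(\x_t)}> T^{-1/4}$ for all $t$. In that case we bound $\sigma_t+\beta$ from above by \eqref{eq:sig_up1}: using Assumption~\ref{ass:var}, $\norm{\nabla f(\x_t;\xi_t)}\le\norm{\nabla f(\x_t)}+\nu$, and $\alpha^2\hL\sqrt{\hzt_1}\lesssim \alpha\nu$, this gives $\sigma_t+\beta\lesssim \frac{\alpha}{\sqrt{\ell-1}}(\norm{\nabla f(\x_t)}+\nu)$. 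Hence
\[
\frac{\norm{\nabla f(\x_t)}^2}{\sigma_t+\beta}\gtrsim \frac{\sqrt{\ell-1}}{\alpha}\cdot\frac{\norm{\nabla f(\x_t)}^2}{\norm{\nabla f(\x_t)}+\nu},
\]
which is at least of order $\min\{\norm{\nabla f(\x_t)},\norm{\nabla f(\x_t)}^2/\nu\}$ times $\sqrt{\ell-1}/\alpha$.

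The $L_1\norm{\nabla f(\x_t)}$ part of the quadratic term is $\eta^2\zeta_2L_1\norm{\nabla f(\x_t)}\cdot 48(\ell-1)/\alpha^2$ (plus the $\beta$ term, which is of the same order because $\alpha^2\hL^2\hzt_1/\beta^2\lesssim(\ell-1)/\alpha^2$). Substituting $\eta$ shows it equals $\frac{48}{700\sqrt T}\cdot\frac{\eta\sqrt{\ell-1}}{\alpha}\norm{\nabla f(\x_t)}$. The factor $1/\sqrt{T}$ makes it a small fraction of the descent term, provided we compare it against $\norm{\nabla f}^2/(\norm{\nabla f}+\nu)$. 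This comparison is where the max structure and the factor $\nu$ in the final bound come from. The remaining error terms are the $L_0$ quadratic term, of order $\eta^2\zeta_2L_0(\ell-1)/\alpha^2$, and the bias term $5\eta\alpha^2\hL^2\hzt_1/(32\beta)$, of order $\eta\alpha\hL^2\hzt_1\sqrt{\ell-1}/\nu$. After dividing by $\eta\sqrt{\ell-1}/\alpha$ and using $\alpha\le T^{-1/2}$, these become $L_0/(L_1\sqrt{T})$-type and $\hL^2\hzt_1/\sqrt T$-type contributions, respectively.

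Then we telescope. We sum the inequalities and apply \eqref{eq:avr}, so that the cross term contributes $\sqrt{T}\cdot 9\sqrt{2}\sqrt{\ell-1}\nu\sqrt{\log(2/\delta)}/\alpha$ times $\eta$. We divide by $T\eta\sqrt{\ell-1}/\alpha$, with $1/\eta\propto\sqrt{T}\sqrt{\ell-1}\zeta_2L_1/\alpha$. In the regime $\norm{\nabla f}\lesssim\nu$ the descent quantity is $\norm{\nabla f}^2/\nu$, so we obtain a bound on the average of $\norm{\nabla f(\x_t)}^2/\nu$ of order $T^{-1/2}$. Taking square roots against the minimum yields the $T^{-1/4}$ rates and the constants $28\cdot700\,\nu\sqrt{\zeta_2}L_1(f(\x_0)-f^*)$ etc. (We will not chase the exact constants, and we expect some looseness when we merge the terms under a single power.) The main obstacle is this bookkeeping between the two regimes $\norm{\nabla f}\gtrless\nu$ together with the $\cK$-containment argument.

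Finally, \eqref{eq:P2} follows by a union bound exactly as in Theorem~\ref{thm:main}. Per iteration we use Lemmas~\ref{lem:E11}, \ref{lem:E21}, \ref{lem:E31}, \ref{lem:E41}, \ref{lem:E2} and \ref{lem:E3}, for six events each failing with probability at most $\delta$ (taking $\exp(-\cO(\ell))\le\delta$), which gives $1-6T\delta$. The Azuma event from \eqref{eq:avr} is accounted for separately in the stated ``probability at least $1-\delta$''.
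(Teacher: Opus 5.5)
Your skeleton matches the paper's. You do the case split at $\phi=T^{-1/4}$, bound $\sigma_t+\beta\lesssim\frac{\alpha}{\sqrt{\ell-1}}(\norm{\nabla f(\x_t)}+\nu)$ via Assumption~\ref{ass:var}, and absorb the $L_1$ quadratic term; that absorption does go through when $\norm{\nabla f(\x_t)}\ge\phi$, roughly when $\nu\lesssim T^{1/4}$. You then telescope \eqref{eq:dec} with \eqref{eq:avr} and use the union bound for \eqref{eq:P2}. Your $\cK$ concern is fair, but the paper's proof of this theorem does not address it either. Containment is checked only in Corollary~\ref{cor:main1}, in the $\bH$-norm, with exactly the mismatch you point out.

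The real problem is your last step. Write $g_t=\norm{\nabla f(\x_t)}$ and $\psi(g)=g^2/(g+\nu)$. Telescoping gives $\frac1T\sum_t\psi(g_t)\le R$, where $R=\cO(T^{-1/2})$ collects $\zeta_2L_1(f(\x_0)-f^*)/\sqrt T$, $\nu\sqrt{\log(2/\delta)}/\sqrt T$, $L_0/(L_1\sqrt T)$ and the bias term. Inverting $\psi$ and taking square roots gives $\min_t g_t\le\max\{\sqrt{2\nu R},\,2R\}$. That produces terms such as $\sqrt{\nu\zeta_2L_1(f(\x_0)-f^*)}\,T^{-1/4}$ and $\nu(\log(2/\delta))^{1/4}T^{-1/4}$. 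Contrary to your claim, this is not the right-hand side of \eqref{eq:main1}, whose terms are linear in $\nu(f(\x_0)-f^*)$, $\nu^2\sqrt{\log(2/\delta)}$, and so on. As written, you prove a different inequality.

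The missing step is the paper's linearization. The map $g\mapsto g/(a(g+\nu)+b)$ is increasing, so the case assumption $g_t\ge\phi$ gives $\frac{\norm{\nabla f(\x_t)}^2}{\sigma_t+\beta}\ge\frac{2\sqrt{\ell-1}\,\phi}{7\nu\alpha}\norm{\nabla f(\x_t)}$. The descent is then linear in $g_t$ with coefficient proportional to $\phi/\nu$. Telescoping bounds the plain average, $\frac1T\sum_t g_t\le\frac{\phi+\nu}{\phi}R\le\frac{2\nu}{\phi}R=\cO(\nu T^{-1/4})$, which is exactly the shape of \eqref{eq:main1}.

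Alternatively, keep your sharper inversion and add the observation $\sqrt{\nu R}=\sqrt{T^{-1/4}\cdot \nu R\,T^{1/4}}\le\max\{T^{-1/4},\,\nu R\,T^{1/4}\}$. With that, your bound implies \eqref{eq:main1} up to constants. In both versions you need $\phi\lesssim\nu$, i.e.\ $T\gtrsim\nu^{-4}$, to obtain the factor $\nu/\phi$ and to dominate your $2R$ branch. The paper uses this tacitly in its step ``$\ge 2\sqrt{\ell-1}\phi/(7\nu)$'', so you should state it explicitly as an assumption.
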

\begin{proof}

We first consider the case that  $\min\left\{\norm{\nabla f(\x_0)},\dots, \norm{\nabla f(\x_{T-1})}\right\} \leq \frac{1}{T^{1/4}}$. 
If this result holds, then we have proved the result in Eq.~\eqref{eq:main1}.
If not, we will prove the case that 
\begin{equation}\label{eq:upp}
	\min\left\{\norm{\nabla f(\x_0)},\dots, \norm{\nabla f(\x_{T-1})}\right\} \geq \frac{1}{T^{1/4}}.
\end{equation}	
For the notation convenience, we use $\phi$ to denote $1/T^{1/4}$.
Then, by Assumption~\ref{ass:var}, we can obtain that
\begin{align*}
	\sigma_t + \beta 
	\stackrel{\eqref{eq:sig_up1}}{\leq} 
	\sqrt{\frac{5}{2(\ell - 1)}} \alpha \norm{\nabla f(\x_t;\;\xi_t)} + \beta 
	\leq 
	\sqrt{\frac{5}{2(\ell - 1)}} \alpha \left(\norm{\nabla f(\x_t)} + \nu \right) + \beta,	
\end{align*}
where the last inequality is because of Assumption~\ref{ass:var}.

Thus, we can obtain
\begin{align*}
- \frac{\norm{\nabla f(\x_t)}^2}{\sigma_t + \beta}
\leq 
- \frac{\norm{\nabla f(\x_t)}^2}{\sqrt{\frac{5}{2(\ell - 1)}} \alpha \left(\norm{\nabla f(\x_t)} + \nu \right) + \beta}.
\end{align*}
Furthermore, 
\begin{align*}
	\frac{\norm{\nabla f(\x_t)}}{\sqrt{\frac{5}{2(\ell - 1)}}  \left(\norm{\nabla f(\x_t)} + \nu \right) + \beta/\alpha}  
	\stackrel{\eqref{eq:upp}}{\geq} 
	\frac{\phi}{\sqrt{\frac{5}{2(\ell - 1)}}  \left(\phi + \nu \right) + \beta/\alpha}
	\geq
	\frac{2 \sqrt{\ell - 1} \phi}{7 \nu},
\end{align*}
where the last inequality is because of the setting of $\beta$ such that
\begin{align*}
	\beta 
	= 
	\frac{\alpha \nu}{4\sqrt{\ell - 1}} 
	\leq 
	\alpha \cdot \left(\frac{7-\sqrt{10}}{2\sqrt{\ell - 1}} \nu - \frac{\sqrt{10}}{2\sqrt{\ell-1}}\phi\right)
	, \mbox{ with } 
	\phi  
	\leq
	\sqrt{\frac{5}{2}} - 1. 
\end{align*}

Furthermore, it holds that 
\begin{align*}
	\frac{48(\ell-1)}{\alpha^2} + \frac{\alpha^2 \hL^2 \hzt_1 }{8\beta^2} 
	= \frac{48(\ell-1)}{\alpha^2} + \frac{ 2(\ell - 1) \hL^2 \hzt_1 }{\nu^2}
	\leq
	\frac{50(\ell-1)}{\alpha^2},
\end{align*}
where the first equality is because of the value of $\beta$ and the last inequality is because of $\alpha^2 \leq \frac{\nu^2}{\hL^2 \hzt_1}$.

Combining with Eq.~\eqref{eq:dec} and the value of $\beta$, we can obtain that
\begin{align*}
	f(\x_{t+1}) 
	\leq& 
	f(\x_t) - \frac{\eta}{4\alpha} \cdot \left( \frac{2 \sqrt{\ell - 1} \phi}{7 \nu} - \frac{100\eta (\ell - 1)\zeta_2 L_1}{\alpha}  \right)\norm{\nabla f(\x_t)}\\
	&
	- \eta \cdot \frac{ \dotprod{\bS^\top \left(\nabla f(\x_t) - \nabla f(\x;\;\xi_t)\right), \bS^\top \nabla f(\x_t)} }{\sigma_t+\beta }\\
	&
	+ \frac{5\eta  \sqrt{\ell - 1}\alpha \hL^2\hzt_1}{8\nu}
	+ \frac{50\eta^2(\ell - 1) \zeta_2 L_0}{\alpha^2}\\
	=&
	f(\x_t) 
	- \frac{\eta}{4\alpha} \cdot \frac{\sqrt{\ell - 1}\phi}{7\nu} \cdot \norm{\nabla f(\x_t)}\\
	&
	- \eta \cdot \frac{ \dotprod{\bS^\top \left(\nabla f(\x_t) - \nabla f(\x;\;\xi_t)\right), \bS^\top \nabla f(\x_t)} }{\sigma_t+\beta }\\
	&
	+ \frac{5\eta  \sqrt{\ell - 1}\alpha \hL^2\hzt_1}{8\nu}
	+ \frac{25\eta^2(\ell - 1) \zeta_2 L_0}{\alpha^2},
\end{align*}
where the last inequality is because of setting $\eta \le \frac{\alpha \phi}{700\sqrt{\ell-1}\zeta_2 L_1}$.

Telescoping above equation, we can obtain that
\begin{align*}
	&\frac{\eta}{4\alpha} \cdot \frac{\sqrt{\ell - 1} \phi}{7\nu}\frac{1}{T} \sum_{t=0}^{T-1} \norm{\nabla f(\x_t)}\\
	\leq& 
	\frac{f(\x_0) - f(\x^*)}{T} 
	- \frac{\eta}{T} \cdot \sum_{t=0}^{T-1}\frac{ \dotprod{\bS^\top \left(\nabla f(\x_t) - \nabla f(\x;\;\xi_t)\right), \bS^\top \nabla f(\x_t)} }{\sigma_t+\beta }\\
	&
	+ \frac{5\eta  \sqrt{\ell - 1}\alpha \hL^2\hzt_1}{8\nu}
	+ \frac{25\eta^2(\ell - 1) \zeta_2 L_0}{\alpha^2}\\
	\stackrel{\eqref{eq:avr}}{\leq}&
	\frac{f(\x_0) - f(\x^*)}{T}
	+ \frac{\eta}{\sqrt{T}} \cdot \frac{9\sqrt{2} \sqrt{\ell - 1} \nu}{ \alpha} \sqrt{\log\frac{2}{\delta}}\\
	&
	+ \frac{5\eta  \sqrt{\ell - 1}\alpha \hL^2\hzt_1}{8\nu}
	+ \frac{25\eta^2(\ell - 1) \zeta_2 L_0}{\alpha^2}.
\end{align*}
Therefore,
\begin{align*}
	\frac{1}{T} \sum_{t=0}^{T-1} \norm{\nabla f(\x_t)}
	\leq&  
	\frac{28\nu \alpha \cdot (f(\x_0) - f(\x^*))}{\eta \sqrt{\ell-1} \phi T}
	+ \frac{252\sqrt{2}\nu^2}{ \phi \sqrt{T}}\sqrt{\log\frac{2}{\delta}} \\
	&
	+ \frac{35  \alpha^2 \hL^2\hzt_1}{2 \phi}
	+ \frac{700\eta \sqrt{(\ell - 1)} \zeta_2 L_0 \nu}{\alpha \phi}\\
	=&
	\frac{28\cdot 700\cdot \nu \sqrt{\zeta_2} L_1 \cdot (f(\x_0) - f(\x^*))}{\phi \sqrt{T}}
	+ \frac{252\sqrt{2}\nu^2}{ \phi \sqrt{T}}\sqrt{\log\frac{2}{\delta}} \\
	&
	+ \frac{35  \alpha^2 \hL^2\hzt_1}{2 \phi}
	+ \frac{  L_0 \nu \sqrt{\zeta_2}}{\phi\sqrt{T}L_1}\\
	=&
	\frac{28\cdot 700\cdot \nu \sqrt{\zeta_2} L_1 \cdot (f(\x_0) - f(\x^*))}{T^{1/4} }
	+ \frac{252\sqrt{2}\nu^2}{ T^{1/4}}\sqrt{\log\frac{2}{\delta}} \\
	&
	+ \frac{35  \alpha^2 \hL^2\hzt_1 T^{1/4}}{2 }
	+ \frac{  L_0 \nu \sqrt{\zeta_2}}{T^{1/4}L_1}\\
	\leq&
	\frac{28\cdot 700\cdot \nu \sqrt{\zeta_2} L_1 \cdot (f(\x_0) - f(\x^*))}{T^{1/4} }
	+ \frac{252\sqrt{2}\nu^2}{ T^{1/4}}\sqrt{\log\frac{2}{\delta}} \\
	&
	+ \frac{35   \hL^2\hzt_1 }{2 T^{1/4} }
	+ \frac{  L_0 \nu \sqrt{\zeta_2}}{T^{1/4}L_1},
\end{align*}
where the first equality is because of $\eta = \frac{\alpha }{ 700 \sqrt{T} \cdot \sqrt{\ell - 1} \sqrt{\zeta_2} L_1}$, the second equality is because of $\phi = \frac{1}{T^{1/4}}$, and  the last inequality is because of $\alpha \leq T^{-1/2}$.

Since it holds that
\begin{align*}
	\min_{t=0,\dots, T-1} \norm{\nabla f(\x_t)} 
	\leq 
	\frac{1}{T} \sum_{t=0}^{T-1} \norm{\nabla f(\x_t)},
\end{align*}
then it holds that 
\begin{align*}
	\min_{t=0,\dots, T-1} \norm{\nabla f(\x_t)}
	\leq 
\frac{28\cdot 700\cdot \nu \sqrt{\zeta_2} L_1 \cdot (f(\x_0) - f(\x^*))}{T^{1/4} }
+ \frac{252\sqrt{2}\nu^2}{ T^{1/4}}\sqrt{\log\frac{2}{\delta}} 
+ \frac{35   \hL^2\hzt_1 }{2 T^{1/4} }
+ \frac{  L_0 \nu \sqrt{\zeta_2}}{T^{1/4}L_1}.
\end{align*}

Next, we will prove the result in Eq.~\eqref{eq:P2}.
By a union bound and the definition of $\hE_t$, we have
\begin{align*}
	\PP\left(\cE_t \right) 
	\geq 
	1 -  \left(\sum_{i=1}^4 \PP\left(\overline{\hE_{t,i}}\right) + \sum_{i=2}^{3}\PP\left(\overline{\cE_{t,i}}\right)\right)
	\geq 
	1 - \left( \delta + \delta + \delta + \exp(-\cO\left(\ell\right)) + \delta + \delta \right)
	\geq
	1 - 6\delta,
\end{align*}
where $\overline{\cE_{t,i}}$ denotes the complement of $\cE_{t,i}$, the second inequality is because of Lemma~\ref{lem:E11}-Lemma~\ref{lem:E41} and  Lemma~\ref{lem:E2}-Lemma~\ref{lem:E3}, and the last inequality is because of $\ell = \cO\left( k\log\frac{1}{\delta} \right)$.

Using the union bound again, we can obtain that
\begin{align*}
	\PP\left(\cap_{t=0}^{T-1} \;\hE_t\right) 
	\geq 
	1 - \sum_{t=0}^{T-1} \PP\left(\overline{\hE_t}\right)
	\geq
	1 - 6 T\delta,
\end{align*}
which concludes the proof.
\end{proof}

\begin{corollary}\label{cor:main1}
Let functions $f(\x)$ and $f(\x;\;\xi)$ satisfy the properties described in Theorem~\ref{thm:main1}.
Let $\bS_t\in\RR^{d\times \ell}$ be the Gaussian or Rademacher sketching matrices with $\ell = 16k\log({4T}/{\delta})$ and $\delta\in(0,1)$.
Set the smooth parameter $\alpha$ and step size $\eta$ satisfy
\begin{align*}
	\alpha^2 \leq \min\left\{ \frac{1}{2L_1^2\zeta_1 T},\; \frac{3\varepsilon^2}{8L_0^2\zeta_1}\right\} 
	\qquad
	\mbox{and}
	\qquad
	\eta = \frac{\alpha}{\sqrt{(\ell - 1)\zeta_2 T} },
\end{align*}
with $T>0$ being large enough that $\eta \leq {3\alpha}/{(64 L_1 \zeta_2 \sqrt{\ell - 1})}$ and with $0<\varepsilon$ being the target precision. 
Then if the total iteration number $T$ satisfies 
\begin{equation}\label{eq:T1}
	T = \cO\left(\max\left\{
	\frac{1}{\varepsilon^4},
	\frac{ \norm{\bH}^2 \nu^4}{\varepsilon^4},
	\frac{\Big(\tr(\bH)\Big)^2}{k^2 \varepsilon^4}
	\right\}
	\right),
\end{equation}
then, with a probability at least $1-\delta$, the sequence $\{\x_t\}$ with $t = 0,\dots, T-1$ generated by Algorithm~\ref{alg:SA} satisfies that
\begin{align*}
	\min_{t=0,\dots, T-1} \norm{\nabla f(\x_t)} \leq \varepsilon.
\end{align*}
Furthermore, the total query complexity is 
\begin{equation}\label{eq:Q1}
	Q = 	\cO\left(\max\left\{
	\frac{k}{\varepsilon^4},
	\frac{ k\norm{\bH}^2 \nu^4 }{\varepsilon^4},
	\frac{\Big(\tr(\bH)\Big)^2 \nu^4}{k \varepsilon^4}
	\right\}\cdot \log\frac{1}{\delta\varepsilon}
	\right).
\end{equation}
\end{corollary}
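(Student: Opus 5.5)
The plan is to derive the corollary from the analysis behind Theorem~\ref{thm:main1}, kept under the corollary's own parameter choices rather than the theorem's, and then convert the resulting rate into iteration and query bounds. Throughout, $\phi = T^{-1/4}$ and $\beta = \alpha\nu/(4\sqrt{\ell-1})$ as in Theorem~\ref{thm:main1}.

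\textbf{Step 1: the stated parameters meet the hypotheses of the stochastic lemmas.}
\begin{itemize}
\item The stated bound $\alpha^2 \le 1/(2L_1^2\zeta_1 T)$ gives $\alpha \le T^{-1/2}$ once $2L_1^2\zeta_1 \ge 1$. This is the only place the theorem used $\alpha \le T^{-1/2}$, namely in the term $35\alpha^2\hL^2\hzt_1/(2\phi)$.
\item The lemmas leading to Eq.~\eqref{eq:dec} and Eq.~\eqref{eq:avr} also need $\alpha^2 \le \nu^2/(8\hL^2\hzt_1)$. This is not among the stated constraints. I would treat it as an additional cap absorbed into ``$T$ large enough'', since $\alpha\to 0$ as $T$ grows.
\item \textbf{Main obstacle.} The corollary's step size is $\eta = \alpha/\sqrt{(\ell-1)\zeta_2 T}$, not the theorem's. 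The telescoping in the proof of Theorem~\ref{thm:main1} only needs
\[
\eta \le \frac{\alpha\phi}{700\sqrt{\ell-1}\,\zeta_2 L_1}.
\]
With the stated $\eta$ this reads $\sqrt{T} \ge 700 L_1\sqrt{\zeta_2}\,T^{1/4}$, i.e.\ $T \ge (700L_1)^4\zeta_2^2$. This is exactly a ``$T$ large enough'' requirement of the type the statement allows. The stated condition $\eta \le 3\alpha/(64L_1\zeta_2\sqrt{\ell-1})$ is checked the same way.
\end{itemize}

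\textbf{Step 2: redo the final bound with the new $\eta$.} I would keep the case split of Theorem~\ref{thm:main1}. If some $\norm{\nabla f(\x_t)} \le \phi$, we are done once $\phi \le \varepsilon$, which is where $T \gtrsim 1/\varepsilon^4$ comes from. Otherwise, repeat the telescoped inequality and insert the new $\eta$, so that $\alpha/(\eta\sqrt{\ell-1}) = \sqrt{\zeta_2 T}$. The four terms become:
\begin{itemize}
\item the initial-gap term, of order $\nu\sqrt{\zeta_2}\,(f(\x_0)-f^*)/(\phi\sqrt{T}) = O(\nu\sqrt{\zeta_2}/T^{1/4})$;
\item the martingale term from Eq.~\eqref{eq:avr}, of order $\nu^2\sqrt{\log(2/\delta)}/T^{1/4}$;
\item the bias term $35\alpha^2\hL^2\hzt_1/(2\phi)$, bounded using $\alpha^2 \le 1/T$;
\item the $L_0$ term, of order $\eta\sqrt{\ell-1}\,\zeta_2L_0\nu/(\alpha\phi) = L_0\nu\sqrt{\zeta_2}/T^{1/4}$.
\end{itemize}
Requiring each term to be at most $\varepsilon$ gives $T \gtrsim \nu^4\zeta_2^2/\varepsilon^4$. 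Expanding $\zeta_2^2 \lesssim \norm{\bH}^2 + \tr(\bH)^2/k^2$ via Eq.~\eqref{eq:zeta_2} yields the three entries of Eq.~\eqref{eq:T1}, with the $\nu$ and log factors suppressed as in the statement. The burn-in condition $T \gtrsim L_1^4\zeta_2^2$ from Step~1 is of the same form and is absorbed.

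\textbf{Step 3: probability and query count.} For Gaussian or Rademacher sketches, $\ell = 16k\log(4T/\delta)$ makes each event in $\hE_t$ fail with probability at most $\delta/(12T)$, including the $\exp(-\cO(\ell))$ event. Then Eq.~\eqref{eq:P2} gives failure probability at most $\delta/2$. Running Azuma in Eq.~\eqref{eq:avr} at level $\delta/2$ and taking a union bound gives overall success probability at least $1-\delta$. Finally $Q = T\ell = O\big(T\,k\log(T/\delta)\big)$, and substituting Eq.~\eqref{eq:T1} with $\log(T/\delta) = O(\log(1/(\varepsilon\delta)))$ gives Eq.~\eqref{eq:Q1}.
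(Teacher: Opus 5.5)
\textbf{Gap: you never show the iterates stay in $\cK$.} Step~1 lists hypotheses of the stochastic lemmas but omits the decisive one. Lemma~\ref{lem:hg_prop} and Lemma~\ref{lem:gamma1}, and therefore Lemma~\ref{lem:hsig}, Eq.~\eqref{eq:up} and the descent inequality Eq.~\eqref{eq:dec}, use the Hessian bound $\hL$ of Assumption~\ref{ass:hL} only at points $\x_t$ and $\x_t+\alpha\si$ lying in $\cK$. This set is a ball around $\x_0$ of radius $3\sqrt{T}/(350L_1\sqrt{\zeta_2})$. Establishing this containment is the main content of the paper's proof of this corollary. The paper:
\begin{itemize}
\item runs with the theorem's step size $\eta=\alpha/(700\sqrt{T}\sqrt{\ell-1}\,\zeta_2L_1)$;
\item sums the per-step bound Eq.~\eqref{eq:up} over at most $T$ steps, getting $\norm{\x_t-\x_0}_{\bH}\le \sqrt{6T}/(350L_1\sqrt{\zeta_2})+\sqrt{T}/(350\sqrt{8}L_1\sqrt{\zeta_2})\le 3\sqrt{T}/(350L_1\sqrt{\zeta_2})$;
\item handles $\x_t+\alpha\si$ via Lemma~\ref{lem:tr} and the smallness of $\alpha$;
\item only then reads $T$ off Eq.~\eqref{eq:main1} and sets $Q=T\ell$.
\end{itemize}
The radius of $\cK$ is calibrated exactly to that step size.

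\textbf{Why it fails under your step size.} You keep the statement's $\eta=\alpha/\sqrt{(\ell-1)\zeta_2T}$, which is larger than the theorem's by the factor $700L_1\sqrt{\zeta_2}$. The same summation then only gives
$\norm{\x_t-\x_0}_{\bH}\le T\eta\bigl(2\sqrt{6\zeta_2(\ell-1)}/\alpha+2\hL\sqrt{(\ell-1)\zeta_2\hzt_1}/\nu\bigr)\le(2\sqrt{6}+1/\sqrt{2})\sqrt{T}$.
Meanwhile $\cK$ has radius $3\sqrt{T}/(350L_1\sqrt{\zeta_2})$. Both scale like $\sqrt{T}$, so ``$T$ large enough'' cannot repair this. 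Containment would require $L_1\sqrt{\zeta_2}\lesssim 1.5\times 10^{-3}$, which is a restriction on the problem constants, not a burn-in. Without containment, the bounds on $\hv$, $\hgam$ and $\sigma_t+\beta$ are unavailable, and so is your telescoped inequality.

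\textbf{How to repair it.} Either of these works:
\begin{itemize}
\item Use the theorem's $\alpha$ and $\eta$, as the paper's proof does. The statement's parameter block appears to be copied from Corollary~\ref{cor:main}.
\item Assume the Hessian bound on a ball of radius of order $\sqrt{T}$ that does not shrink with $L_1\sqrt{\zeta_2}$. This is a different hypothesis from Assumption~\ref{ass:hL}.
\end{itemize}

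\textbf{A smaller point.} Because $\cK$, and hence $\hL$, depend on $T$, absorbing $\alpha^2\le\nu^2/(8\hL^2\hzt_1)$ into ``$T$ large enough'' is circular. Impose it directly as an extra cap on $\alpha$; the statement's upper bound on $\alpha^2$ permits this.

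Apart from these issues, your term-by-term bookkeeping in Step~2 is in line with the paper's. Step~3 is more careful than the paper about combining the failure probabilities.
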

\begin{proof}
First, we will show that $\x_t \in \cK$	defined in Assumption~\ref{ass:hL}.
By the definition of $\x_t$, we can obtain that
\begin{align*}
	\norm{\x_t - \x_0}_{\bH} 
	=
	\norm{\x_{t-1} - \eta \frac{\hg(\x_{t-1})}{\sigma_{t-1}} - \x_0}_{\bH}
	\leq 
	\norm{ \x_{t-1} - \x_0 }_{\bH} 
	+ 
	\eta \norm{\frac{\hg(\x_{t-1})}{\sigma_{t-1}}}_{\bH}
	\leq
	\eta \sum_{i=0}^{t-1} \norm{\frac{\hg(\x_{t-1})}{\sigma_{t-1}}}_{\bH}.
\end{align*}
Combining with Eq.~\eqref{eq:up} and the setting of step size, we can obtain that
\begin{align*}
	\norm{\x_t - \x_0}_{\bH} 
	\leq& 
	\eta \sum_{i=0}^{t-1} \left(\frac{2\sqrt{6\zeta_2(\ell-1)}}{\alpha} + \frac{2 \hL \sqrt{(\ell-1)\zeta_2\hzt_1}}{\nu}\right)\\
	\leq&
	T \eta  \left(\frac{2\sqrt{6\zeta_2(\ell-1)}}{\alpha} + \frac{2 \hL \sqrt{(\ell-1)\zeta_2\hzt_1}}{\nu}\right)\\
	\leq&
	T \cdot \frac{\alpha }{ 700 \sqrt{T} \cdot \sqrt{\ell - 1} \zeta_2 L_1} \left(\frac{2\sqrt{6\zeta_2(\ell-1)}}{\alpha} + \frac{2 \hL \sqrt{(\ell-1)\zeta_2\hzt_1}}{\nu}\right)\\
	=&
	\frac{\sqrt{6 T}}{350 L_1 \sqrt{\zeta_2}}
	+ 
	\frac{\hL \alpha\sqrt{\hzt_1 T}}{350\sqrt{\zeta_2} L_1 \nu}\\
	\leq&
	\frac{\sqrt{6 T}}{350 L_1 \sqrt{\zeta_2}} 
	+ 
	\frac{\sqrt{T}}{350 \sqrt{8} L_1 \sqrt{\zeta_2}}\\
	\leq& 
	\frac{3\sqrt{ T}}{350 L_1 \sqrt{\zeta_2}},
\end{align*}
where the fifth inequality is because of $\alpha = \min\{\frac{1}{T^{1/2}}, \sqrt{ \frac{\nu^2}{8\hL^2 \hzt_1} }\}$.
Thus, we obtain that all $\x_t$ belongs to the set $\cK$.

Next, we will prove that $\x_t + \alpha\si$ also belongs to the set $\cK$. 
\begin{align*}
	\norm{\x_t + \alpha \si - \x_0}_{\bH}
	\leq 
	\norm{\x_t - \x_0}_{\bH} + \alpha \norm{\si}_{\bH} 
	\stackrel{\eqref{eq:tr}}{\leq}
	\norm{\x_t - \x_0}_{\bH} + 2\alpha \sqrt{\tr(\bH)}
	\leq
	\frac{3\sqrt{ T}}{350 L_1 \sqrt{\zeta_2}} 
	+  2\alpha \sqrt{\tr(\bH)}.
\end{align*}
Combining with the setting of $\alpha \leq $, we can obtain that  $\x_t + \alpha\si$ is in $\cK$.

By setting the right hand side of Eq.~\eqref{eq:main1} to $\varepsilon$, then we only require $T$ satisfy that
\begin{align*}
	T 
	= \cO\left(\max\left\{
	\frac{1}{\varepsilon^4},
	\frac{\zeta_2^2 \nu^4}{\varepsilon^4}
	\right\}
	\right)
	=
	\cO\left(\max\left\{
	\frac{1}{\varepsilon^4},
	\frac{ \norm{\bH}^2 \nu^4}{\varepsilon^4},
	\frac{\Big(\tr(\bH)\Big)^2}{k^2 \varepsilon^4}
	\right\}
	\right),
\end{align*}
where the last equality is because of the definition of $\zeta_2$ in Eq.~\eqref{eq:zeta_2}.

Then, the total query complexity is 
\begin{align*}
	Q 
	= 
	T\ell
	=& 
	\cO\left(\max\left\{
	\frac{k}{\varepsilon^4},
	\frac{ k\norm{\bH}^2 \nu^4 }{\varepsilon^4},
	\frac{\Big(\tr(\bH)\Big)^2 \nu^4}{k \varepsilon^4}
	\right\}\cdot \log\frac{T}{\delta}
	\right)\\
	=&
	\cO\left(\max\left\{
	\frac{k}{\varepsilon^4},
	\frac{ k\norm{\bH}^2 \nu^4 }{\varepsilon^4},
	\frac{\Big(\tr(\bH)\Big)^2 \nu^4}{k \varepsilon^4}
	\right\}\cdot \log\frac{1}{\delta\varepsilon}
	\right).
\end{align*}
\end{proof}

\begin{remark}
	Theorem~\ref{thm:main1} and Corollary~\ref{cor:main1} show Algorithm~\ref{alg:SA1} can achieve an 
    \begin{align*}
    \cO\left(\max\left\{
	\frac{1}{\varepsilon^4},
	\frac{ \norm{\bH}^2 \nu^4}{\varepsilon^4},
	\frac{\Big(\tr(\bH)\Big)^2}{k^2 \varepsilon^4}
	\right\}
	\right)    
    \end{align*}
    iteration complexity. 
	If we increase $k$ which means increasing the batch size $\ell$, Algorithm~\ref{alg:SA1} can find an $\varepsilon$-stationary point with less steps.
	At the same time, Eq.~\eqref{eq:Q1} shows that  Algorithm~\ref{alg:SA1} requires less queries if we increase the sample size $\ell$ until $\ell = 16k\log\frac{T}{\delta}$ with $k = \frac{\tr(\bH)}{\norm{\bH}}$.
	This is different from that of Algorithm~\ref{alg:SA} whose query complexity  can \emph{not} benefit from increasing the batch size just as discussed in Remark~\ref{rmk:main}.
\end{remark}
\begin{remark}\label{rmk:Q1}
	If we choose batch size $\ell = 16k\log({T}/{\delta})$ with $k = {\tr(\bH)}/{\norm{\bH}}$, then Eq.~\eqref{eq:Q1} will reduce to 
	\begin{align*}
		Q 
		=& 
		\cO\left(\max\left\{
		\frac{\tr(\bH)}{\norm{\bH} \varepsilon^4},
		\frac{ \norm{\bH} \cdot \tr(\bH) \cdot \nu^4 }{\varepsilon^4}
		\right\}\cdot \log\frac{1}{\delta\varepsilon}
		\right)\\
		=&
		\cO\left(\max\left\{
		\frac{\tr(\bH)}{\norm{\bH} \varepsilon^4},
		\frac{\tr(\bH)}{\norm{\bH}} \cdot \frac{ \norm{\bH}^2  \cdot \nu^4 }{\varepsilon^4}
		\right\}\cdot \log\frac{1}{\delta\varepsilon}
		\right).
	\end{align*}
	Thus, the query complexity of Algorithm~\ref{alg:SA1} also achieves a weak dimension dependency just as the Algorithm~\ref{alg:SA} does. 
	In many real applications, we can choose proper matrix $\bH$ such that $\frac{\tr(\bH)}{\norm{\bH}} \ll d$.
\end{remark}

{
\begin{remark}\label{rmk:sto_M}
Similar to the statement of Remark \ref{remark:deter-M}, even by taking $\hg(\x_t)=\nabla f(\x_t;\xi_t)$ and Assumption~\ref{ass:LL} with $\bH=\bI$, finding an $\varepsilon$-stationary point via the fixed stepsize update
$\x_{t+1}=\x_t - \eta\hg(\x_t)$
with $\eta_t>0$ 
requires the iteration numbers that depends on 
$M$ \citep[Theorem 8]{zhang2019gradient}.
In contrast, our Corollary \ref{cor:main1} shows that
the iteration numbers required by Algorithm~\ref{alg:SA1} only depends on $\norm{\bH}$ and $\tr(\bH)$.
\end{remark}}

\section{Conclusion}

In this work, we establish a theoretical explanation for the effectiveness of adaptive zeroth-order optimization methods. By characterizing the relationship between function value variability and gradient magnitude, we show that the empirical standard deviation of sampled function values serves as a reliable proxy for the norm of the (stochastic) gradient with high probability. This result provides a rigorous justification for standard deviation normalization as a principled adaptive step-size control mechanism in ZO optimization.

Building on this insight, we analyze adaptive ZO methods under a generalized $(L_0,L_1)$-smoothness condition defined with respect to a positive semi-definite matrix norm. This setting goes beyond classical smoothness assumptions and captures anisotropic geometries commonly observed in modern deep neural networks, including large language models. Our analysis yields explicit convergence rates and query complexity bounds for both deterministic and stochastic objectives.

A central implication of our results is that adaptive ZO methods can achieve strictly faster convergence and significantly reduced query complexity compared to fixed-step vanilla ZO methods, by automatically adjusting to the local geometry of the optimization landscape. These findings bridge the gap between empirical practice and theoretical understanding of adaptive ZO algorithms. Looking forward, this framework opens avenues for analyzing more sophisticated adaptive mechanisms and for extending ZO optimization theory to broader classes of non-smooth and large-scale learning problems.

\pb
\clearpage
\bibliography{ref.bib}

\begin{thebibliography}{}

\bibitem[Borodich \& Kovalev, 2025]{borodich2025nesterov}
Borodich, E. \& Kovalev, D. (2025).
\newblock Nesterov finds graal: Optimal and adaptive gradient method for convex
  optimization.
\newblock {\em arXiv preprint arXiv:2507.09823}.

\bibitem[Chen et~al., 2017]{chen2017zoo}
Chen, P.-Y., Zhang, H., Sharma, Y., Yi, J., \& Hsieh, C.-J. (2017).
\newblock Zoo: Zeroth order optimization based black-box attacks to deep neural
  networks without training substitute models.
\newblock In {\em Proceedings of the 10th ACM Workshop on Artificial
  Intelligence and Security}  (pp.\ 15--26).: ACM.

\bibitem[Chen et~al., 2023]{chen2023generalized}
Chen, Z., Zhou, Y., Liang, Y., \& Lu, Z. (2023).
\newblock Generalized-smooth nonconvex optimization is as efficient as smooth
  nonconvex optimization.
\newblock In {\em International Conference on Machine Learning}  (pp.\
  5396--5427).

\bibitem[Chezhegov et~al., 2025]{chezhegov2025convergence}
Chezhegov, S., Beznosikov, A., Horv{\'a}th, S., \& Gorbunov, E. (2025).
\newblock Convergence of clipped-{SGD} for convex {$(L_0, L_1)$}-smooth
  optimization with heavy-tailed noise.
\newblock {\em arXiv preprint arXiv:2505.20817}.

\bibitem[Chrabaszcz et~al., 2018]{chrabaszcz2018back}
Chrabaszcz, P., Loshchilov, I., \& Hutter, F. (2018).
\newblock Back to basics: Benchmarking canonical evolution strategies for
  playing atari.
\newblock {\em arXiv preprint arXiv:1802.08842}.

\bibitem[Cohen, 2016]{cohen2016simpler}
Cohen, M.~B. (2016).
\newblock Simpler and tighter analysis of sparse oblivious subspace embeddings.
\newblock In {\em Proceedings of the 27th Annual ACM-SIAM Symposium on Discrete
  Algorithms (SODA)}.

\bibitem[Cohen et~al., 2016]{cohen2016optimal}
Cohen, M.~B., Nelson, J., \& Woodruff, D.~P. (2016).
\newblock Optimal approximate matrix product in terms of stable rank.
\newblock In {\em International Colloquium on Automata, Languages, and
  Programming}: Schloss Dagstuhl-Leibniz-Zentrum fur Informatik GmbH, Dagstuhl
  Publishing.

\bibitem[Cooper, 2022]{cooper2024empirical}
Cooper, Y. (2022).
\newblock An empirical study of the {$(L_0, L_1)$}-smoothness condition.
\newblock In {\em Workshop on Mathematics of Modern Machine Learning}.

\bibitem[Cortinovis \& Kressner, 2021]{cortinovis2021randomized}
Cortinovis, A. \& Kressner, D. (2021).
\newblock On randomized trace estimates for indefinite matrices with an
  application to determinants.
\newblock {\em Foundations of Computational Mathematics}, (pp.\ 1--29).

\bibitem[Crawshaw \& Liu, 2025]{crawshaw2025complexity}
Crawshaw, M. \& Liu, M. (2025).
\newblock Complexity lower bounds of adaptive gradient algorithms for
  non-convex stochastic optimization under relaxed smoothness.
\newblock {\em arXiv preprint arXiv:2505.04599}.

\bibitem[Crawshaw et~al., 2022]{crawshaw2022robustness}
Crawshaw, M., Liu, M., Orabona, F., Zhang, W., \& Zhuang, Z. (2022).
\newblock Robustness to unbounded smoothness of generalized {SignSGD}.
\newblock In {\em Advances in Neural Information Processing Systems}  (pp.\
  9955--9968).

\bibitem[Dang et~al., 2025]{dang2025fzoo}
Dang, S., Guo, Y., Zhao, Y., Ye, H., Zheng, X., Dai, G., \& Tsang, I. (2025).
\newblock Fzoo: Fast zeroth-order optimizer for fine-tuning large language
  models towards adam-scale speed.
\newblock {\em arXiv preprint arXiv:2506.09034}.

\bibitem[Gaash et~al., 2025]{gaash2025convergence}
Gaash, O., Levy, K.~Y., \& Carmon, Y. (2025).
\newblock Convergence of clipped {SGD} on convex {$(L_0, L_1)$}-smooth
  functions.
\newblock {\em arXiv preprint arXiv:2502.16492}.

\bibitem[Ghadimi \& Lan, 2013]{ghadimi2013stochastic}
Ghadimi, S. \& Lan, G. (2013).
\newblock Stochastic first-and zeroth-order methods for nonconvex stochastic
  programming.
\newblock {\em SIAM Journal on Optimization}, 23(4), 2341--2368.

\bibitem[Gorbunov et~al., 2024]{gorbunov2024methods}
Gorbunov, E., Tupitsa, N., Choudhury, S., Aliev, A., Richt{\'a}rik, P.,
  Horv{\'a}th, S., \& Tak{\'a}{\v{c}}, M. (2024).
\newblock Methods for convex {$(L_0, L_1)$}-smooth optimization: Clipping,
  acceleration, and adaptivity.
\newblock {\em arXiv preprint arXiv:2409.14989}.

\bibitem[Ilyas et~al., 2018]{ilyas18a}
Ilyas, A., Engstrom, L., Athalye, A., \& Lin, J. (2018).
\newblock Black-box adversarial attacks with limited queries and information.
\newblock In J. Dy \& A. Krause (Eds.), {\em Proceedings of the 35th
  International Conference on Machine Learning}, volume~80 of {\em Proceedings
  of Machine Learning Research}  (pp.\ 2137--2146).  Stockholmsmässan,
  Stockholm Sweden: PMLR.

\bibitem[Jiang et~al., 2025]{jiang2025decentralized}
Jiang, Z., Balu, A., \& Sarkar, S. (2025).
\newblock Decentralized relaxed smooth optimization with gradient descent
  methods.
\newblock {\em arXiv preprint arXiv:2508.08413}.

\bibitem[Khirirat et~al., 2024a]{khirirat2024communication}
Khirirat, S., Sadiev, A., Riabinin, A., Gorbunov, E., \& Richt{\'a}rik, P.
  (2024a).
\newblock Communication-efficient algorithms under generalized smoothness
  assumptions.
\newblock {\em OpenReview}.

\bibitem[Khirirat et~al., 2024b]{khirirat2024error}
Khirirat, S., Sadiev, A., Riabinin, A., Gorbunov, E., \& Richt{\'a}rik, P.
  (2024b).
\newblock Error feedback under {$(L_0, L_1)$}-smoothness: Normalization and
  momentum.
\newblock {\em arXiv preprint arXiv:2410.16871}.

\bibitem[Koloskova et~al., 2023]{koloskova2023revisiting}
Koloskova, A., Hendrikx, H., \& Stich, S.~U. (2023).
\newblock Revisiting gradient clipping: Stochastic bias and tight convergence
  guarantees.
\newblock In {\em International Conference on Machine Learning}  (pp.\
  17343--17363).

\bibitem[Laurent \& Massart, 2000]{Laurent2000Adaptive}
Laurent, B. \& Massart, P. (2000).
\newblock Adaptive estimation of a quadratic functional by model selection.
\newblock {\em Annals of Statistics}, 28(5), 1302--1338.

\bibitem[Li et~al., 2023]{li2023convergence}
Li, H., Rakhlin, A., \& Jadbabaie, A. (2023).
\newblock Convergence of adam under relaxed assumptions.
\newblock {\em Advances in Neural Information Processing Systems}, (pp.\
  52166--52196).

\bibitem[Li et~al., 2024]{li2024problem}
Li, J., Chen, X., Ma, S., \& Hong, M. (2024).
\newblock Problem-parameter-free decentralized nonconvex stochastic
  optimization.
\newblock {\em arXiv preprint arXiv:2402.08821}.

\bibitem[Lobanov \& Gasnikov, 2025]{lobanov2025power}
Lobanov, A. \& Gasnikov, A. (2025).
\newblock Power of generalized smoothness in stochastic convex optimization:
  First-and zero-order algorithms.
\newblock {\em arXiv preprint arXiv:2501.18198}.

\bibitem[Lobanov et~al., 2024]{lobanov2024linear}
Lobanov, A., Gasnikov, A., Gorbunov, E., \& Tak{\'a}{\v{c}}, M. (2024).
\newblock Linear convergence rate in convex setup is possible! gradient descent
  method variants under {$(L_0, L_1)$}-smoothness.
\newblock {\em arXiv preprint arXiv:2412.17050}.

\bibitem[Lyu \& Tsang, 2019]{lyu2019black}
Lyu, Y. \& Tsang, I.~W. (2019).
\newblock Black-box optimizer with implicit natural gradient.
\newblock {\em arXiv preprint arXiv:1910.04301}.

\bibitem[Malladi et~al., 2023]{malladi2023fine}
Malladi, S., Gao, T., Nichani, E., Damian, A., Lee, J.~D., Chen, D., \& Arora,
  S. (2023).
\newblock Fine-tuning language models with just forward passes.
\newblock {\em Advances in Neural Information Processing Systems}, 36,
  53038--53075.

\bibitem[Mania et~al., 2018]{mania2018simple}
Mania, H., Guy, A., \& Recht, B. (2018).
\newblock Simple random search of static linear policies is competitive for
  reinforcement learning.
\newblock In {\em Proceedings of the 32nd International Conference on Neural
  Information Processing Systems}  (pp.\ 1805--1814).

\bibitem[Nesterov \& Spokoiny, 2017]{Nesterov2017}
Nesterov, Y. \& Spokoiny, V. (2017).
\newblock Random gradient-free minimization of convex functions.
\newblock {\em Foundations of Computational Mathematics}, 17(2), 527--566.

\bibitem[Qiu et~al., 2025]{qiu2025evolution}
Qiu, X., Gan, Y., Hayes, C.~F., Liang, Q., Meyerson, E., Hodjat, B., \&
  Miikkulainen, R. (2025).
\newblock Evolution strategies at scale: Llm fine-tuning beyond reinforcement
  learning.
\newblock {\em arXiv preprint arXiv:2509.24372}.

\bibitem[Reisizadeh et~al., 2025]{reisizadeh2025variance}
Reisizadeh, A., Li, H., Das, S., \& Jadbabaie, A. (2025).
\newblock Variance-reduced clipping for non-convex optimization.
\newblock In {\em International Conference on Acoustics, Speech and Signal
  Processing}  (pp.\ 1--5).

\bibitem[Salimans et~al., 2017]{salimans2017evolution}
Salimans, T., Ho, J., Chen, X., Sidor, S., \& Sutskever, I. (2017).
\newblock Evolution strategies as a scalable alternative to reinforcement
  learning.
\newblock {\em arXiv preprint arXiv:1703.03864}.

\bibitem[Tovmasyan et~al., 2025]{tovmasyan2025revisiting}
Tovmasyan, Z., Malinovsky, G., Condat, L., \& Richt{\'a}rik, P. (2025).
\newblock Revisiting stochastic proximal point methods: Generalized smoothness
  and similarity.
\newblock {\em arXiv preprint arXiv:2502.03401}.

\bibitem[Tyurin, 2025]{tyurin2024toward}
Tyurin, A. (2025).
\newblock Toward a unified theory of gradient descent under generalized
  smoothness.
\newblock In {\em International Conference on Learning Representations}.

\bibitem[Vankov et~al., 2025]{vankov2024optimizing}
Vankov, D., Rodomanov, A., Nedich, A., Sankar, L., \& Stich, S.~U. (2025).
\newblock Optimizing {$(L_0, L_1)$}-smooth functions by gradient methods.
\newblock In {\em International Conference on Learning Representations}.

\bibitem[Woodruff et~al., 2014]{woodruff2014sketching}
Woodruff, D.~P. et~al. (2014).
\newblock Sketching as a tool for numerical linear algebra.
\newblock {\em Foundations and Trends{\textregistered} in Theoretical Computer
  Science}, 10(1--2), 1--157.

\bibitem[Xie et~al., 2024]{xie2024trust}
Xie, C., Li, C., Zhang, C., Deng, Q., Ge, D., \& Ye, Y. (2024).
\newblock Trust region methods for nonconvex stochastic optimization beyond
  lipschitz smoothness.
\newblock In {\em AAAI Conference on Artificial Intelligence}  (pp.\
  16049--16057).

\bibitem[Ye et~al., 2025]{ye2025unified}
Ye, H., Chang, X., \& Chen, X. (2025).
\newblock A unified zeroth-order optimization framework via oblivious
  randomized sketching.
\newblock {\em arXiv preprint arXiv:2510.10945}.

\bibitem[Yu et~al., 2025]{yu2025mirror}
Yu, D., Jiang, W., Wan, Y., \& Zhang, L. (2025).
\newblock Mirror descent under generalized smoothness.
\newblock {\em arXiv preprint arXiv:2502.00753}.

\bibitem[Zhang et~al., 2020a]{zhang2020improved}
Zhang, B., Jin, J., Fang, C., \& Wang, L. (2020a).
\newblock Improved analysis of clipping algorithms for non-convex optimization.
\newblock In {\em Advances in Neural Information Processing Systems}  (pp.\
  15511--15521).

\bibitem[Zhang et~al., 2020b]{zhang2019gradient}
Zhang, J., He, T., Sra, S., \& Jadbabaie, A. (2020b).
\newblock Why gradient clipping accelerates training: A theoretical
  justification for adaptivity.
\newblock {\em International Conference on Learning Representations}.

\end{thebibliography}
\bibliographystyle{apalike2}

\appendix

\section{Useful Lemmas}

\begin{lemma}[Azuma-Hoeffding Inequality]\label{lem:concent}
	Let $E_1,\dots,E_T$ be a martingale difference sequence with a uniform bound $|E_i|\leq B$ for all $i$.
	Then,
	\begin{equation}
		\Pr\left( \frac{1}{s} \sum_{t=1}^{s} E_t > \zeta \right) \leq \exp\left( -\frac{\zeta^2 }{2 B} \cdot s \right).
	\end{equation}
\end{lemma}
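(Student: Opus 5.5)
The plan is to go through the exponential-moment (Chernoff) route for martingales. First I will establish the standard Azuma--Hoeffding bound, whose exponent has $B^2$ in the denominator. Then I will identify exactly when that bound implies the exponent $-\zeta^2 s/(2B)$ as printed.

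\textbf{Step 1 (conditional Hoeffding lemma).} Let $\mathcal{F}_{t-1}$ be the natural filtration, so that $\EE[E_t\mid\mathcal{F}_{t-1}]=0$. Since $E_t\in[-B,B]$ almost surely, Hoeffding's lemma applied conditionally gives, for every $\lambda>0$,
\[
\EE\big[e^{\lambda E_t}\mid\mathcal{F}_{t-1}\big]\le e^{\lambda^2 B^2/2}.
\]
I would prove this by convexity: write $e^{\lambda x}$ on $[-B,B]$ as the chord interpolation between $e^{-\lambda B}$ and $e^{\lambda B}$, take the conditional expectation using zero mean, and bound the resulting $\cosh(\lambda B)\le e^{\lambda^2B^2/2}$.

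\textbf{Step 2 (tower and Markov).} Iterating the tower property gives
\[
\EE\big[e^{\lambda\sum_{t\le s}E_t}\big]\le e^{s\lambda^2B^2/2}.
\]
Markov's inequality then yields
\[
\Pr\Big(\tfrac1s\sum_{t=1}^s E_t>\zeta\Big)\le \exp\big(-\lambda s\zeta+s\lambda^2B^2/2\big).
\]
Choosing $\lambda=\zeta/B^2$ optimizes this and gives $\exp(-\zeta^2 s/(2B^2))$.

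\textbf{Step 3 (matching the printed exponent).} This is the step I expect to be the real obstacle. The printed form $\exp(-\zeta^2 s/(2B))$ is not scale invariant. If every $E_t$ is multiplied by $c$, then both $B$ and $\zeta$ scale by $c$, so the left side is unchanged while the printed exponent scales by $c$. The printed bound therefore cannot hold for all $B$, and I will prove it exactly in the regimes where it is true.

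\emph{Regime (i): $B\le 1$.} Here $B^2\le B$, so $\exp(-\zeta^2s/(2B^2))\le\exp(-\zeta^2s/(2B))$, and Step 2 already gives the statement.

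\emph{Regime (ii): $B$ read as the bound on $E_t^2$.} If the "uniform bound" is understood as $E_t^2\le B$, apply Steps 1--2 with range bound $\sqrt B$. This gives precisely $\exp(-\zeta^2 s/(2B))$.

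I would record this in the write-up. In its later use (the lemma bounding the average of $\langle \bS^\top(\nabla f(\x_t)-\nabla f(\x_t;\xi_t)),\bS^\top\nabla f(\x_t)\rangle/(\sigma_t+\beta)$), the computation $\exp\big(-T\zeta^2/(2B^2)\big)=\delta$ is in fact carried out with $B^2$. So the $B^2$ version from Step 2 is what the application needs.
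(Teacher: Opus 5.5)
Your proposal is correct. The paper gives no proof of this lemma; it is quoted in the appendix as a standard tool. Your Steps 1--2 are the textbook Chernoff argument (conditional Hoeffding lemma, tower property, Markov, $\lambda=\zeta/B^2$), and they give the standard exponent $-\zeta^2 s/(2B^2)$.

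Your diagnosis in Step 3 is also right: the printed denominator $2B$ is a typo for $2B^2$. Your scaling argument shows the printed form cannot hold for all $B$. For a concrete instance, take $E_1=\pm B$ with probability $1/2$, $s=1$ and $\zeta\uparrow B$; the left side is $1/2$, while the right side tends to $e^{-B/2}<1/2$ once $B>2\ln 2$. The paper itself applies the lemma with $B^2$ in the exponent, so the version you prove is the one actually used there.

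You should make one hypothesis explicit, since both the statement and your choice $\lambda=\zeta/B^2>0$ rely on it: $\zeta>0$. For $\zeta<0$ the inequality fails in either form. For example, $E_t\equiv 0$ gives left side $1$ and right side $<1$.
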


\begin{lemma}\label{lem:gss}
If $\bS$ is the Gaussian sketching matrix, that is, its entries  are independently identically distribution drawn from $\cN(0, 1/\ell)$ with $\ell \geq 4$, then with a probability at least $1 - \delta'$ with 
\begin{align*}
	\delta'= \exp\left(- \left( \frac{\ell}{8} - \sqrt{\frac{\ell - 2}{8}} \right)\right),
\end{align*}
such that $  \frac{1}{\ell}  \left( \sum_{i=1}^{\ell} \dotprod{\nabla f(\x), \si} \right)^2 \leq \frac{1}{4}\norm{\nabla f(\x)}^2$.
\end{lemma}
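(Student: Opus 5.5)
The plan is to reduce the event to a one-dimensional chi-square tail bound. Write $\g := \nabla f(\x)$ and assume $\g\neq 0$; otherwise the claim is trivial. The key observation is that $\sum_{i=1}^{\ell}\dotprod{\g,\si} = \dotprod{\g, \sum_{i=1}^{\ell}\si}$. Since the columns $\si$ are independent with i.i.d. $\cN(0,1/\ell)$ entries, each scalar $\dotprod{\g,\si}$ is distributed as $\cN(0,\norm{\g}^2/\ell)$, and these $\ell$ scalars are independent. Hence their sum is $\cN(0,\norm{\g}^2)$. This gives
\[
\frac{1}{\ell}\Big(\sum_{i=1}^{\ell}\dotprod{\g,\si}\Big)^2 = \frac{\norm{\g}^2}{\ell}\, Z^2, \qquad Z\sim\cN(0,1),
\]
so the target event is exactly $\{Z^2 \le \ell/4\}$. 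We therefore need to show $\PP(Z^2 > \ell/4)\le \delta'$.

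To get the stated form of $\delta'$, I would invoke the Laurent--Massart inequality \citep{Laurent2000Adaptive} for a chi-square variable with $D$ degrees of freedom: $\PP(\chi^2_D \ge D + 2\sqrt{Dx} + 2x)\le e^{-x}$. Applied with $D=1$, it reads $\PP(Z^2 \ge 1+2\sqrt{x}+2x)\le e^{-x}$. I would then choose $x$ so that $1+2\sqrt{x}+2x \le \ell/4$. Solving the quadratic in $\sqrt{x}$ gives $\sqrt{x} = \tfrac{1}{2}\big(-1+\sqrt{\ell/2-1}\big)$. Expanding yields $x = \tfrac{\ell}{8} - \tfrac12\sqrt{\tfrac{\ell}{2}-1}$, and this equals $\tfrac{\ell}{8} - \sqrt{\tfrac{\ell-2}{8}}$, since $\tfrac12\sqrt{(\ell-2)/2} = \sqrt{(\ell-2)/8}$. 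This matches the $\delta'$ in the statement exactly, which confirms the intended route.

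The condition $\ell \ge 4$ guarantees $\sqrt{\ell/2-1}\ge 1$, so $\sqrt{x}\ge 0$ and $x\ge 0$ is admissible. The main care point is precisely this arithmetic: checking that the chosen $x$ is nonnegative and that the threshold coincides with $\ell/4$.

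As an alternative that avoids the citation, one could use the simple Gaussian tail bound $\PP(|Z|>t)\le 2e^{-t^2/2}$ with $t=\sqrt{\ell}/2$. This gives $2e^{-\ell/8}$, which is also $\exp(-\cO(\ell))$ and suffices for Lemma~\ref{lem:E4}, but it does not reproduce the exact constant in the statement. For that reason I would present the Laurent--Massart route.
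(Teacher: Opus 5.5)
Your proposal is correct and matches the paper's proof: you reduce the sum to a single $\cN(0,\norm{\nabla f(\x)}^2)$ variable and then apply the Laurent--Massart $\chi^2(1)$ tail bound with $1+2\sqrt{\tau}+2\tau=\ell/4$, which gives exactly $\tau=\frac{\ell}{8}-\sqrt{\frac{\ell-2}{8}}$. You also check that $\ell\ge 4$ makes $\sqrt{\tau}\ge 0$, with the bound trivial at $\ell=4$; the paper leaves this detail implicit.
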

\begin{proof}
	First, if $S$ is the Gaussian sketching matrix, that is, its entries  are independently identically distribution drawn from $\cN(0, 1/\ell)$. 
	Then by the property of Gaussian distribution,  we can obtain that $\dotprod{\nabla f(\x), \si} \sim \cN(0, \norm{\nabla f(\x)}^2/\ell)$.
	Since $\si$'s are independent, we can obtain that 
\begin{align*}
 \sum_{i=1}^{\ell} \dotprod{\nabla f(\x), \si} \sim \cN(0, \norm{\nabla f(\x)}^2).
\end{align*}
Thus, $(\sum_{i=1}^{\ell} \dotprod{\nabla f(\x), \si})^2$ follows a $\chi^2(1)$ distribution scaled with $\norm{\nabla f(\x)}^2$.

Then, by Lemma~\ref{lem:chi}, the probability that $ \frac{1}{\ell} (\sum_{i=1}^{\ell} \dotprod{\nabla f(\x), \si})^2$ larger than $1/4 \norm{\nabla f(\x)}^2$ is no larger than 

\begin{align*}
\PP\left[q_1 \geq 1 + 2 \sqrt{ \tau} + 2 \tau = \frac{\ell}{4} \right] \leq \exp(-\tau),
\end{align*}
with $\sqrt{\tau} = \sqrt{\frac{\ell - 2}{8}} - \frac{1}{2}$ which implies $\tau = \frac{\ell}{8} - \sqrt{\frac{\ell - 2}{8}}$.
\end{proof}

\begin{lemma}[$\chi^2$ tail bound~\cite{Laurent2000Adaptive}]
	\label{lem:chi}
	Let $q_1, \dots, q_n$ be independent $\chi^2$ random variables, each with one degree of freedom. For any vector $\gamma = (\gamma_1, \dots , \gamma_n) \in \RR_+^n$ with non-negative
	entries, and any $\tau > 0$,
	\[
	\PP\left[\sum_{i=1}^{n}\gamma_iq_i \geq \norm{\gamma}_1 + 2 \sqrt{\norm{\gamma}_2^2 \tau} + 2\norm{\gamma}_\infty \tau\right] \leq \exp(-\tau),
	\]
	where $\norm{\gamma}_1 = \sum_{i=1}^{n} |\gamma_i|$.
\end{lemma}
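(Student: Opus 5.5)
We would prove the tail bound by the Chernoff (exponential moment) method: bound the log-moment generating function of the centered sum $Z-\EE[Z]$, where $Z=\sum_{i=1}^n\gamma_i q_i$ and $\EE[Z]=\norm{\gamma}_1$, and then invert it. We may assume $\gamma\neq 0$, since otherwise $Z=0$ and the event is empty.

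\textbf{Step 1 (one coordinate).} For a single $\chi^2(1)$ variable $q$ and any $0\le u<1/2$ we have $\EE[e^{uq}]=(1-2u)^{-1/2}$. Hence
\begin{align*}
\log \EE\left[e^{u(q-1)}\right] = -u-\tfrac12\log(1-2u) = \sum_{k\ge 2}\frac{(2u)^k}{2k}\le \frac14\sum_{k\ge2}(2u)^k=\frac{u^2}{1-2u}.
\end{align*}
The middle equality comes from the power series of $\log(1-2u)$, and the inequality uses $2k\ge 4$ for $k\ge 2$.

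\textbf{Step 2 (independence).} Take $0\le\lambda<1/(2\norm{\gamma}_\infty)$ and apply Step 1 with $u=\lambda\gamma_i$ for each $i$. Using $1-2\lambda\gamma_i\ge 1-2\lambda\norm{\gamma}_\infty$, the log-MGFs add up to
\begin{align*}
\log\EE\left[e^{\lambda(Z-\norm{\gamma}_1)}\right]\le \frac{\lambda^2\norm{\gamma}_2^2}{1-2\lambda\norm{\gamma}_\infty}.
\end{align*}
This says $Z-\EE[Z]$ is sub-gamma on the right tail with variance factor $v=2\norm{\gamma}_2^2$ and scale $c=2\norm{\gamma}_\infty$. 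Indeed, the bound has the form $\frac{v\lambda^2}{2(1-c\lambda)}$.

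\textbf{Step 3 (inversion).} By Markov's inequality,
\begin{align*}
\PP[Z-\norm{\gamma}_1\ge t]\le \exp\left(-\sup_{0\le\lambda<1/c}\left(\lambda t-\frac{v\lambda^2}{2(1-c\lambda)}\right)\right).
\end{align*}
It remains to show that the supremum is at least $\tau$ when $t=\sqrt{2v\tau}+c\tau$. Note that $\sqrt{2v\tau}=2\sqrt{\norm{\gamma}_2^2\tau}$ and $c\tau=2\norm{\gamma}_\infty\tau$, which gives exactly the threshold in the statement.

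We expect this inversion to be the only delicate point. We would first try the explicit choice
\begin{align*}
\lambda=\frac{1}{c}\left(1-\frac{1}{\sqrt{1+2c\tau/\sqrt{2v\tau}}}\right)\in[0,1/c),
\end{align*}
or equivalently set $s=\sqrt{2\tau/v}$ and take $\lambda=\frac{s}{1+cs}$. With $\lambda=\frac{s}{1+cs}$ we have $1-c\lambda=\frac{1}{1+cs}$, and direct algebra gives
\begin{align*}
\lambda t-\frac{v\lambda^2}{2(1-c\lambda)}=\frac{s(\sqrt{2v\tau}+c\tau)}{1+cs}-\frac{vs^2}{2(1+cs)}.
\end{align*}
Substituting $vs^2=2\tau$ and $\sqrt{2v\tau}\,s=2\tau$, the numerator becomes $2\tau+c\tau s-\tau=\tau(1+cs)$. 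So the expression equals exactly $\tau$, and the stated bound $\exp(-\tau)$ follows.
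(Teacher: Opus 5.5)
Your proof is correct and is essentially the original Laurent--Massart argument. The paper gives no proof of its own and only cites that reference. You bound each coordinate by $-u-\tfrac12\log(1-2u)\le u^2/(1-2u)$ and sum to the sub-gamma bound $\lambda^2\norm{\gamma}_2^2/(1-2\lambda\norm{\gamma}_\infty)$, as the source does. The one difference is that you carry out the Birg\'e--Massart inversion explicitly instead of citing it, and your computation with $\lambda=s/(1+cs)$ does give exactly $\tau$.

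Two small corrections are needed.

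First, your ``first try'' $\lambda$ is not equivalent to $s/(1+cs)$. Since $2c\tau/\sqrt{2v\tau}=cs$, that formula equals $\frac1c\bigl(1-(1+cs)^{-1/2}\bigr)$. The actual optimizer is $\frac1c\bigl(1-(1+2ct/v)^{-1/2}\bigr)$, and $1+2ct/v=(1+cs)^2$ recovers your second choice, so just drop the first formula.

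Second, the case $\gamma=0$ is not vacuous. There the threshold is $0$ and $Z=0$, so $\{Z\ge 0\}$ is the sure event and the inequality as stated actually fails. You should therefore state $\gamma\neq 0$ as a hypothesis rather than dismiss the case as trivial. This is implicit in the source and harmless for the paper's use with $n=1$, $\gamma_1=1$.
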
 

\begin{lemma}[Lemma 11 of \citet{ye2025unified}]\label{lem:bound_S_F_norm}
	If $\bS\in\RR^{d\times \ell}$ is an oblivious  $(\frac{1}{4}, k, \delta)$-random sketching matrix, then with a probability at least $1-\delta$, it holds that
	\begin{equation}\label{eq:S_norm}
		\norm{\bS}_F^2 \leq \frac{5\ell}{4} + \frac{d \ell}{4k}.
	\end{equation}
\end{lemma}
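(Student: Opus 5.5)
The plan is to apply Definition~\ref{def:ske} once, with the fixed test matrix $\bA = \bI_d$, and then convert the resulting spectral-norm bound into a Frobenius-norm bound using the fact that $\bS$ has only $\ell$ columns. Since $\bI_d$ is deterministic, $\bS$ is trivially independent of it, so the definition applies. It then guarantees that, with probability at least $1-\delta$,
\begin{equation*}
\norm{\bS\bS^\top - \bI_d} \le \frac{1}{4}\left(\norm{\bI_d}^2 + \frac{\norm{\bI_d}_F^2}{k}\right) = \frac{1}{4}\left(1 + \frac{d}{k}\right).
\end{equation*}

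On this event, the triangle inequality gives
\begin{equation*}
\norm{\bS\bS^\top} \le \norm{\bI_d} + \frac{1}{4} + \frac{d}{4k} = \frac{5}{4} + \frac{d}{4k}.
\end{equation*}
Next I would pass to the Frobenius norm through the trace:
\begin{equation*}
\norm{\bS}_F^2 = \tr(\bS^\top\bS) \le \ell\,\norm{\bS^\top\bS} = \ell\,\norm{\bS\bS^\top}.
\end{equation*}
The inequality holds because $\bS^\top\bS$ is an $\ell\times\ell$ positive semi-definite matrix, so its trace is at most $\ell$ times its largest eigenvalue. The equality holds because $\bS^\top\bS$ and $\bS\bS^\top$ share the same nonzero eigenvalues. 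Combining the two displays gives $\norm{\bS}_F^2 \le \frac{5\ell}{4} + \frac{d\ell}{4k}$, which is exactly Eq.~\eqref{eq:S_norm}.

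There is no real obstacle. The only point needing care is to bound the trace by $\ell$ times the spectral norm, using the dimension of $\bS^\top\bS$, rather than by $d$ times it. That choice is what produces the factor $\ell$ in both terms, and hence the $d\ell/(4k)$ term, instead of the much weaker $d\,(5/4 + d/(4k))$.
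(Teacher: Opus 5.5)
Your proof is correct and complete. Applying Definition~\ref{def:ske} with the deterministic choice $\bA=\bI_d$ gives $\norm{\bS}^2=\norm{\bS\bS^\top}\le \frac{5}{4}+\frac{d}{4k}$ with probability at least $1-\delta$. The bound $\norm{\bS}_F^2=\tr(\bS^\top\bS)\le \ell\norm{\bS}^2$ then yields exactly Eq.~\eqref{eq:S_norm}. The paper gives no argument of its own here; it imports the bound as Lemma 11 of \citet{ye2025unified}. The form $\ell\bigl(\frac{5}{4}+\frac{d}{4k}\bigr)$ of the stated bound suggests this is the intended derivation, so your write-up makes the cited step self-contained.
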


\begin{lemma}\label{lem:HW}
	Let $\bm{r} \in\RR^d $ be a vector of mean $0$,i.i.d. sub-Gaussian random
	variables with constant sub-Gaussian parameter $C$. Let $\bA\in\RR^{d\times d}$ be a matrix. Then, there exists
	a constant $c$ only depending on $C$ such that for every $\tau\geq 0$,
	\begin{equation}
		\PP\left\{ \left| \bm{r}^\top \bA \bm{r} - \EE\left[\bm{r}^\top \bA \bm{r}\right] \right| > \tau\right\} \leq 2 \exp\left(-c \cdot \min\left\{ \frac{\tau^2}{\norm{\bA}_F^2},\; \frac{\tau}{\norm{\bA}_2} \right\}\right)
	\end{equation}
\end{lemma}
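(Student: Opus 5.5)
The statement to prove is the Hanson--Wright inequality (Lemma~\ref{lem:HW}). I would follow the classical route of Rudelson and Vershynin: split the quadratic form into a diagonal part and an off-diagonal part, and show that each deviates from its mean by more than $\tau/2$ with probability at most $\exp(-c\min\{\tau^2/\norm{\bA}_F^2,\tau/\norm{\bA}_2\})$. Write
\begin{align*}
\bm{r}^\top \bA \bm{r} - \EE\left[\bm{r}^\top \bA \bm{r}\right] = \sum_{i} A_{ii}\left(r_i^2 - \EE r_i^2\right) + \sum_{i\neq j} A_{ij} r_i r_j .
\end{align*}

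\textbf{Diagonal part.} The variables $r_i^2-\EE r_i^2$ are independent, centered and sub-exponential, with $\psi_1$-norm $\lesssim C^2$. Bernstein's inequality for sums of sub-exponential variables then gives a tail bound
\begin{align*}
2\exp\left(-c\min\left\{\frac{\tau^2}{\sum_i A_{ii}^2},\;\frac{\tau}{\max_i|A_{ii}|}\right\}\right).
\end{align*}
Since $\sum_i A_{ii}^2\le \norm{\bA}_F^2$ and $\max_i|A_{ii}|\le\norm{\bA}_2$, this is dominated by the target bound.

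\textbf{Off-diagonal part (main obstacle).} Let $Z=\sum_{i\ne j}A_{ij}r_ir_j$. I would bound the moment generating function $\EE\exp(\lambda Z)$ for $|\lambda|\le c/\norm{\bA}_2$ in three steps.
\begin{enumerate}
\item \emph{Decoupling.} Replace $Z$ by $\bm{r}^\top\bA\bm{r}'$, where $\bm{r}'$ is an independent copy, at the cost of a constant factor in $\lambda$. This uses random subsets $\Lambda$ with Bernoulli$(1/2)$ selectors and Jensen's inequality.
\item \emph{Conditioning.} Conditioning on $\bm{r}$, the form is linear in $\bm{r}'$, so sub-Gaussianity gives $\EE'\exp(\lambda\bm{r}^\top\bA\bm{r}')\le\exp(C'\lambda^2\norm{\bA^\top\bm{r}}^2)$.
\item \emph{Gaussian comparison.} Replace $\bm{r}$ by a standard Gaussian vector $\bm{g}$ by introducing an auxiliary Gaussian, which yields the bound $\EE\exp(C''\lambda^2\norm{\bA^\top \bm{g}}^2)$. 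Rotation invariance reduces this to $\prod_i(1-2C''\lambda^2 s_i^2)^{-1/2}$, where $s_i$ are the singular values of $\bA$. This is at most $\exp(C\lambda^2\norm{\bA}_F^2)$ when $\lambda^2\norm{\bA}_2^2$ is small.
\end{enumerate}

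\textbf{Combining.} Markov's inequality applied to $e^{\lambda Z}$, optimized over $\lambda\in[0,c/\norm{\bA}_2]$, gives $\exp(-c\min\{\tau^2/\norm{\bA}_F^2,\tau/\norm{\bA}_2\})$; applying the same argument to $-\bA$ handles the other tail. A union bound over the diagonal and off-diagonal parts, absorbing constants into $c$, finishes the proof. The decoupling and Gaussian-replacement steps are the delicate ones, and I would follow the Rudelson--Vershynin argument closely there.
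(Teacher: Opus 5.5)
The paper does not prove this lemma. It is quoted in the appendix as the classical Hanson--Wright inequality and used as a black box (in Lemma~\ref{lem:ss}, with a rank-one $\bA$), so there is no argument in the paper to compare against. Your outline is the standard Rudelson--Vershynin proof and it is correct: Bernstein for the diagonal part; decoupling, conditioning and Gaussian replacement for the off-diagonal part; then Chernoff with $|\lambda|\lesssim 1/\norm{\bA}_2$.

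Three small points to tighten when you write it out.

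\emph{The prefactor.} Your union bound over the diagonal and off-diagonal parts gives a prefactor $4$, not the $2$ fixed in the statement. Say explicitly that $\min\{1,4e^{-cx}\}\le 2e^{-cx/2}$ for all $x\ge 0$. When $cx\le 2\ln 2$ the right side is at least $1$; otherwise $2e^{-cx/2}<1$ absorbs the extra factor. So halving $c$ gives exactly the claimed form.

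\emph{The decoupling step.} Either decouple the diagonal-free matrix $\bA-\diag(\bA)$, or pass to the full form $\bm{r}^\top\bA\bm{r}'$. For the diagonal-free matrix, $\norm{\bA-\diag(\bA)}_F\le\norm{\bA}_F$ and $\norm{\bA-\diag(\bA)}_2\le 2\norm{\bA}_2$, since $\max_i|A_{ii}|\le\norm{\bA}_2$. For the full form, add one more Jensen step: condition on $(r_i)_{i\in\Lambda}$ and $(r'_j)_{j\notin\Lambda}$, and use that all the added terms have conditional mean zero. Either way only constants change.

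\emph{Independence.} Your argument uses only independence and centering of the coordinates, not identical distribution. The ``i.i.d.''\ hypothesis in the statement is stronger than needed.
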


\begin{lemma}\label{lem:ss}
	If $\bS\in\RR^{d\times \ell}$ is random sketching matrix which is Gaussian, Rademacher, SRHT, or Sparse Embedding (refer to Sec.~\ref{subsec:Sketch}), then with  a probability at least $1 - \delta'$ with $\delta' = \exp\left(-\cO(\ell)\right)$, it holds that  $ \frac{1}{\ell}  \left( \sum_{i=1}^{\ell} \dotprod{\nabla f(\x), \si} \right)^2 \leq \frac{1}{4}\norm{\nabla f(\x)}^2$.
\end{lemma}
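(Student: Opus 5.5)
The key observation is that $\sum_{i=1}^{\ell}\dotprod{\nabla f(\x),\si} = \dotprod{\nabla f(\x), \bS\1}$, where $\1\in\RR^\ell$ is the all-ones vector. The event therefore says that the projection of the aggregated direction $\bm{u}:=\bS\1/\sqrt{\ell}$ onto the fixed vector $\bm{g}:=\nabla f(\x)$ satisfies $\dotprod{\bm{g},\bm{u}}^2\le \norm{\bm{g}}^2/4$. Since $\bS$ is oblivious (independent of $\x$), we may treat $\bm{g}$ as fixed and normalize $\norm{\bm{g}}=1$. The task reduces to bounding the upper tail of the scalar random variable $Z:=\dotprod{\bm{g},\bm{u}} = \ell^{-1/2}\sum_{i=1}^{\ell}\dotprod{\bm{g},\si}$. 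Note that $\EE[Z^2]=\ell^{-1}\sum_i\EE\dotprod{\bm{g},\si}^2 = 1/\ell$ for the Gaussian and Rademacher ensembles, whose entries have variance $1/\ell$. The threshold $1/4$ is thus a factor $\ell/4$ above the mean, and we expect a tail of order $\exp(-\Omega(\ell))$.

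For the Gaussian case this is exactly Lemma~\ref{lem:gss}. There, $\sqrt{\ell}Z\sim\cN(0,1)$, and Lemma~\ref{lem:chi} with $n=1$ gives the failure probability $\exp(-(\ell/8-\sqrt{(\ell-2)/8}))=\exp(-\cO(\ell))$.

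For the Rademacher case, I would write $\sqrt{\ell}\,Z=\sum_{j=1}^d g_j \sum_{i=1}^{\ell} S_{ji}$. This is a linear form $\sum_{j,i} g_j S_{ji}$ in the $d\ell$ independent variables $\sqrt{\ell}S_{ji}\in\{\pm1\}$, with coefficient vector of squared norm $\sum_{j,i} g_j^2/\ell = 1$. Hoeffding's inequality then gives $\PP(|\sqrt{\ell}Z|\ge t)\le 2\exp(-t^2/2)$. Taking $t=\sqrt{\ell}/2$ yields failure probability $2\exp(-\ell/8)$. Equivalently, one can apply Lemma~\ref{lem:HW} to the rank-one quadratic form $\bm{r}^\top(\bm{a}\bm{a}^\top)\bm{r}$, whose Frobenius and spectral norms both equal $1$, with deviation $\tau=\ell/4-1$.

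For SRHT and sparse embeddings the columns are not independent, and this is the step I expect to be the main obstacle. My plan there is to exploit the random sign diagonal $\bD$ that is present in both constructions. For SRHT, $\bS^\top\bm{g}=(\bP\bH_d)^\top$-type mixing of $\bD\bm{g}$, so $\dotprod{\bm{g},\bS\1}=\dotprod{\bD\bm{g},\bm{w}}$, where $\bm{w}$ depends only on $\bP$ and $\bH_d$. Conditioned on $\bP$, this is a Rademacher sum in the signs of $\bD$ with variance $\sum_j g_j^2 w_j^2\le \norm{\bm{w}}_\infty^2$. With the standard normalization the entries of $\bm{w}$ are of order $\sqrt{\ell/\ell}\cdot\cO(\sqrt{\log(d/\delta)})$ after a high-probability flattening argument, or bounded directly by counting. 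Hoeffding again gives a sub-Gaussian tail. Sparse embeddings are handled the same way: each coordinate of $\bm{g}$ contributes $s$ independent random signs of size $1/\sqrt{s}$, so $\sum_i\dotprod{\bm{g},\si}$ is a Rademacher sum with variance $\norm{\bm{g}}^2$ times a bounded factor. The exponent is then $\Omega(\ell)$ up to the constants and logarithmic factors already absorbed in the $\cO(\cdot)$ of the claim.

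Collecting the four cases gives $\delta'=\exp(-\cO(\ell))$.
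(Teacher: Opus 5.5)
Your Gaussian, Rademacher and sparse-embedding cases are fine, and they follow essentially the paper's route. The paper writes the quantity as the rank-one quadratic form $(\bS\1_\ell)^\top\nabla f(\x)\nabla^\top f(\x)\,\bS\1_\ell$, asserts that $\bS\1_\ell$ has i.i.d.\ mean-zero sub-Gaussian entries, and applies Lemma~\ref{lem:HW} with $\tau=(\ell/4-1)\norm{\nabla f(\x)}^2$. Your chi-square/Hoeffding bound on the linear form $\dotprod{\nabla f(\x),\bS\1_\ell}$ is the same estimate with explicit constants. For sparse embeddings, your observation that $(\bS\1_\ell)_j=s^{-1/2}\sum_{k=1}^{s}\epsilon_{jk}$ with independent signs is exactly what makes the paper's i.i.d.\ assertion true there.

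The genuine gap is the SRHT case, and it cannot be closed, because the statement is false there. Write $\si=\ell^{-1/2}\bD\bH_d\bm{e}_{p_i}$, with $p_i$ the $i$-th sampled index and $\epsilon_1$ the first diagonal entry of $\bD$. Your step $\norm{\bm{w}}_\infty=\cO(\sqrt{\log(d/\delta)})$ then fails: the Walsh--Hadamard matrix has an all-ones first row, so $w_1=\ell^{-1/2}\sum_{i=1}^{\ell}(\bH_d)_{1,p_i}=\sqrt{\ell}$ for every realization of $\bP$. The flattening lemma bounds $\norm{\bH_d\bD\bm{g}}_\infty$ for a fixed $\bm{g}$. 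It says nothing about a sum of sampled Hadamard columns, whose first coordinate has nonzero mean.

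Consequently the conditional variance $\sum_j g_j^2w_j^2$ can equal $\ell\norm{\bm{g}}^2$. Concretely, for $\nabla f(\x)=\bm{e}_1$ one gets $\sum_i\dotprod{\nabla f(\x),\si}=\epsilon_1\sqrt{\ell}$ deterministically in magnitude. Hence $\frac1\ell\bigl(\sum_i\dotprod{\nabla f(\x),\si}\bigr)^2=\norm{\nabla f(\x)}^2>\frac14\norm{\nabla f(\x)}^2$ with probability one. More generally, the failure probability is at least $1/2$ whenever $|g_1|>\norm{\bm{g}}/2$.

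The paper's proof has the same hole: $(\bS\1_\ell)_1=\pm\sqrt{\ell}$ contradicts its i.i.d.\ sub-Gaussian claim. So SRHT must be excluded from the lemma, or the construction modified.

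Separately, even where a $\sqrt{\log(d/\delta)}$ flattening bound did apply, it would only give $\exp(-\Omega(\ell/\log(d/\delta)))$. A logarithmic loss in the exponent is not absorbed into $\exp(-\cO(\ell))$.
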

\begin{proof}
First, we 
	\begin{align*}
		\left(	\sum_{i=1}^{\ell} \dotprod{\nabla f(\x), \si}\right)^2 = \nabla^\top f(\x) \bS\bm{1}_\ell \bm{1}_\ell^\top \bS^\top \nabla f(\x) = \left(\bS\bm{1}_\ell\right)^\top \nabla f(\x) \nabla^\top f(\x) \bS\bm{1}_\ell.
	\end{align*}
	Our choice of sketching matrix $\bS$ guarantees that $\bS\bm{1}_\ell$ is a vector of mean $0$, i.i.d. sub-Gaussian random
	variables. 
	
	Furthermore, it holds that 
	\begin{align*}
		\EE \left[\left(	\sum_{i=1}^{\ell} \dotprod{\nabla f(\x), \si}\right)^2\right]  
		=
		\sum_{i=1}^{\ell} \EE\left[ \dotprod{\nabla f(\x), \si}^2 \right]
		=
		\sum_{i=1}^{\ell} \EE \left[ \nabla^\top f(\x) \si \si^\top \nabla f(\x) \right]
		= \norm{ \nabla f(\x) }^2,
	\end{align*}
	where the first equality is because of $\si$'s are independent to each other.

	By Lemma~\ref{lem:HW} with $\bm{r} = \bS\bm{1}_\ell$, $\bA = \nabla f(\x) \nabla^\top f(\x)$, and $\tau = \left(\frac{\ell}{4} - 1\right)\norm{\nabla f(\x)}^2 $, to achieve  $   \frac{1}{\ell}  \left( \sum_{i=1}^{\ell} \dotprod{\nabla f(\x), \si} \right)^2 \leq \frac{1}{4}\norm{\nabla f(\x)}^2$, it holds with a probability at least $1 - \delta'$ with $\delta' = \exp\left(-\cO(\ell)\right)$.
\end{proof}

\begin{lemma}[Corollary 2 of \cite{cortinovis2021randomized}]\label{lem:trace}
	Let $\bA \in \RR^{d\times d}$ be positive definite, $\delta \in (0, 1/2]$, $\varepsilon\in(0,1)$, $\ell \in \mathbb{N}$. Let $\tau_\ell(\bA) = \sum_{i=1}^{\ell} \si^\top \bA \si$. To achieve the following property  holding with a probability of at least $1-\delta$,
	\begin{equation*}
		\Big|\tau_\ell(\bA) - \tr(\bA)\Big| \le \varepsilon \cdot \tr(\bA),
	\end{equation*}
	then, 
	\begin{enumerate}
		\item if $\si$ is the $i$-th column of a random  Gaussian sketching matrix, $\ell = 8\varepsilon^{-2}\frac{\norm{A}_2}{\tr(A)}\log\frac{2}{\delta}$ is sufficient.
		\item  if $\si \in\RR^d$ is the $i$-th column of a random Rademacher matrix, $\ell = 8\varepsilon^{-2}(1+\varepsilon)\frac{\norm{A}_2}{\tr(A)}\log\frac{2}{\delta}$ is sufficient.
		\item if $\si \in\RR^d$ is a column of SRHT, $\ell = \left(1 + \sqrt{8\ln\frac{2n}{\delta}}\right)^2\varepsilon^{-2}\log\frac{2}{\delta}$ is sufficient.
	\end{enumerate}
\end{lemma}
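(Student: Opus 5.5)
The statement to prove is Lemma~\ref{lem:trace}, the Hutchinson-type trace estimation bound quoted from \citet{cortinovis2021randomized}. I would not reprove it from scratch. Instead I would reduce each case to a standard concentration inequality for a quadratic form in the sketch entries.

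The first step is to rewrite the estimator as a single quadratic form,
\begin{align*}
\tau_\ell(\bA) = \sum_{i=1}^\ell \si^\top \bA \si = \bm{r}^\top (\bI_\ell\otimes \bA)\bm{r},
\end{align*}
where $\bm{r}=\mathrm{vec}(\bS)$. With entries of variance $1/\ell$, this gives $\EE[\tau_\ell(\bA)]=\tr(\bA)$. The resulting block-diagonal matrix $\bI_\ell\otimes\bA$ has Frobenius norm squared $\ell\norm{\bA}_F^2$ and spectral norm $\norm{\bA}$.

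For the Gaussian case I would rotate into the eigenbasis of $\bA$. Then $\tau_\ell(\bA)=\frac{1}{\ell}\sum_{j}\lambda_j q_j$, where the $q_j$ are independent $\chi^2$ variables, each with $\ell$ degrees of freedom. This lets me apply Lemma~\ref{lem:chi} to both the upper and the lower tail, with weights $\lambda_j/\ell$ repeated $\ell$ times. I would bound $\norm{\gamma}_2^2\le \norm{\bA}\tr(\bA)/\ell$ and $\norm{\gamma}_\infty=\norm{\bA}/\ell$. Setting $\tau=\log(2/\delta)$ and requiring $2\sqrt{\norm{\bA}\tr(\bA)\tau/\ell}+2\norm{\bA}\tau/\ell\le\varepsilon\tr(\bA)$ then yields $\ell\gtrsim\varepsilon^{-2}\frac{\norm{\bA}}{\tr(\bA)}\log\frac{2}{\delta}$, with the constant 8 after tracking the terms carefully.

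I note that the factor should be the stable-rank-type ratio $\norm{\bA}/\tr(\bA)$, which is at most 1. The statement as printed is consistent with this. For the Rademacher case I would use the same decomposition and replace the $\chi^2$ tail with a Bernstein / Hanson--Wright type bound (Lemma~\ref{lem:HW}) on the off-diagonal part. The diagonal part is deterministic and equals $\tr(\bA)$ exactly, and the extra $(1+\varepsilon)$ factor absorbs the subexponential term.

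The SRHT case is the main obstacle, because its columns are not independent. Here I would condition on the random signs $\bD$ and first show that the row norms of $\bH_d\bD\bA^{1/2}$ are flattened with probability $1-\delta/2$, which produces the $\sqrt{8\ln(2n/\delta)}$ factor. I would then apply a sampling-without-replacement Hoeffding bound to the uniform row sampling. Given the difficulty of this step, in the paper I would simply cite Corollary~2 of \citet{cortinovis2021randomized} for all three cases and include the Gaussian derivation above as a sanity check.
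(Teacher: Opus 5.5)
Your proposal matches the paper, which gives no argument of its own and simply imports this bound as Corollary~2 of \citet{cortinovis2021randomized}; citing it for all three cases is exactly what the paper does. Your Gaussian sanity check is also sound, since with $\ell = 8\varepsilon^{-2}\frac{\norm{\bA}}{\tr(\bA)}\log\frac{2}{\delta}$ both Laurent--Massart tail conditions hold for $\varepsilon<1$, but it needs the lower-tail half of that bound, which the paper's Lemma~\ref{lem:chi} does not state (and note that the unspecified constant in Lemma~\ref{lem:HW} could not by itself reproduce the explicit constant $8$ in the Rademacher case).
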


\begin{lemma}\label{lem:tr}
Let $\bS_t\in\RR^{d\times \ell}$ be the Gaussian or Rademacher sketching matrices with $\ell = 16k\log\frac{4T}{\delta}$ and $0<\delta<1$. 
Then, for all $\si$ which is a column of $\bS_t$, with a probability at least $1-\delta$, it holds that
\begin{equation}\label{eq:tr}
\norm{\si}_{\bH}
\leq 
2 \sqrt{\tr(\bH)}
\end{equation}
\end{lemma}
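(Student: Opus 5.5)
We need to bound $\norm{\si}_{\bH}^2=\si^\top\bH\si$ simultaneously for every column of every $\bS_t$, $t=0,\dots,T-1$, by $4\tr(\bH)$. The natural tool is a quadratic-form concentration for a single column. For Gaussian or Rademacher sketches, each column has the form $\si=\bm{r}_i/\sqrt{\ell}$, where $\bm{r}_i\in\RR^d$ has i.i.d. mean-zero sub-Gaussian entries of unit variance. Hence $\EE[\si^\top\bH\si]=\tr(\bH)/\ell$, and this is the quantity we will concentrate.

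First apply Lemma~\ref{lem:HW} with $\bm{r}=\bm{r}_i$ and $\bA=\bH$. This gives, for any $\tau\ge 0$,
\begin{equation*}
\PP\left\{\bm{r}_i^\top\bH\bm{r}_i>\tr(\bH)+\tau\right\}\le 2\exp\left(-c\min\left\{\frac{\tau^2}{\norm{\bH}_F^2},\frac{\tau}{\norm{\bH}}\right\}\right).
\end{equation*}
Use $\norm{\bH}_F^2\le\norm{\bH}\tr(\bH)$ and choose $\tau$ of order $\tr(\bH)+\norm{\bH}\log(4T\ell/\delta)$. Then each column fails with probability at most $\delta/(T\ell)$. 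A union bound over the $T\ell$ columns yields, with probability at least $1-\delta$,
\begin{equation*}
\si^\top\bH\si\le\frac{C\big(\tr(\bH)+\norm{\bH}\log(4T\ell/\delta)\big)}{\ell}
\end{equation*}
for all columns. For the Gaussian case one could alternatively use Lemma~\ref{lem:chi} directly, taking $\gamma$ to be the eigenvalues of $\bH/\ell$; this gives the explicit bound $\big(\tr(\bH)+2\sqrt{\norm{\bH}_F^2\tau}+2\norm{\bH}\tau\big)/\ell$ with $\tau=\log(T\ell/\delta)$.

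The last step is to compare this bound with $4\tr(\bH)$. Since $\ell=16k\log(4T/\delta)\ge 16\log(4T/\delta)$ and $\norm{\bH}\le\tr(\bH)$, we get
\begin{equation*}
\frac{\norm{\bH}\log(4T\ell/\delta)}{\ell}\lesssim\norm{\bH}\le\tr(\bH),
\end{equation*}
and trivially $\tr(\bH)/\ell\le\tr(\bH)$. Taking square roots then gives $\norm{\si}_{\bH}\le 2\sqrt{\tr(\bH)}$.

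The main obstacle is the constants. The bound $4$ must absorb the unspecified constant $c$ in Lemma~\ref{lem:HW}, and also the extra $\log\ell$ that the union bound introduces on top of $\log(4T/\delta)$. The Gaussian case can be made explicit through Lemma~\ref{lem:chi}. For Rademacher I would first try the exact observation that $\norm{\si}^2=d/\ell$ deterministically, which gives $\si^\top\bH\si\le\norm{\bH}d/\ell$. That is not enough in general, so I would fall back on Hanson–Wright with the $1/\ell$ factor providing the slack. The $1/\ell$ scaling leaves a large margin, so the claimed constant should be attainable once $\log(4T/\delta)$ is taken large enough relative to $c^{-1}$.
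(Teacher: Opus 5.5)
Your route differs from the paper's, and your Gaussian branch does go through. Apply Lemma~\ref{lem:chi} to each column, with $\gamma$ the eigenvalues of $\bH/\ell$ and $\tau=\log(T\ell/\delta)$, and use $2\norm{\bH}_F\sqrt{\tau}\le\tr(\bH)+\norm{\bH}\tau$. This gives $\si^\top\bH\si\le(2\tr(\bH)+3\norm{\bH}\log(T\ell/\delta))/\ell$. That is at most $\frac{3}{2}\tr(\bH)$, using $\norm{\bH}\le\tr(\bH)$, $\log(T/\delta)\le\ell/(16k)$, $\log\ell\le\ell/e$ and $\ell>22$. The union bound over all $T\ell$ columns then costs exactly $\delta$.

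The Rademacher branch, however, has a genuine gap. Lemma~\ref{lem:HW} plus the column-wise union bound yields $\si^\top\bH\si\le(2\tr(\bH)+c^{-1}\norm{\bH}\log(2T\ell/\delta))/\ell$. For rank-one $\bH$ (so $\tr(\bH)=\norm{\bH}$), the second term tends to $\tr(\bH)/(16kc)$ as $T/\delta$ grows.

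Your remedy, taking $\log(4T/\delta)$ large relative to $c^{-1}$, cannot work. The sample size $\ell=16k\log(4T/\delta)$ grows at the same rate, so the union-bound cost $\log(T\ell/\delta)/\ell$ does not go to zero; it tends to $1/(16k)$. The relevant knob is $k$, not $T/\delta$.

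Even the largest admissible choice $\tau=(4\ell-1)\tr(\bH)$ does not rescue it. The total failure bound $2e^{c}T\ell\,(\delta/(4T))^{64ck}$ is at most $\delta$ only if $ck>1/64$, and Lemma~\ref{lem:HW} says nothing about the size of $c$. So the explicit constants $16$ and $2$ in the statement are not established for Rademacher sketches.

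The missing idea is the paper's: do not concentrate each column separately. Instead use $\si^\top\bH\si\le\sum_{j=1}^{\ell}\bm{s}_j^\top\bH\bm{s}_j$. The right-hand side is exactly the trace estimator $\tau_\ell(\bH)$ of Lemma~\ref{lem:trace}, which has mean $\tr(\bH)$ and explicit constants for both Gaussian and Rademacher sketches.

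Take $\varepsilon=1/2$ and failure probability $\delta/T$ per iteration. The sample requirement $48\log(2T/\delta)$ (times $\norm{\bH}/\tr(\bH)\le 1$) is met by $\ell=16k\log(4T/\delta)$ under the standing assumption $k\ge 4$. A union bound over only $T$ events then gives $\norm{\si}_{\bH}^2\le\frac{3}{2}\tr(\bH)$ for every column of every $\bS_t$.

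This discards the factor $1/\ell$ that your column-wise analysis keeps; when it works, your route gives the much sharper bound $\cO((\tr(\bH)+\norm{\bH}\log(T\ell/\delta))/\ell)$. But the target $4\tr(\bH)$ has ample room for that loss. In exchange, the paper's route avoids both the unspecified Hanson--Wright constant and the extra $\log\ell$ from union-bounding over columns. If you want to keep the column-wise approach for Rademacher, you need a tail bound for Rademacher quadratic forms with explicit constants in place of Lemma~\ref{lem:HW}.
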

\begin{proof}
By the properties of sketching matrices and the value of $\ell$, we can obtain that
\begin{align*}
	\norm{\si}_{\bH}
	= \sqrt{ \norm{\si}_{\bH}^2 }
	\leq 
	\sqrt{\sum_{i=1}^{\ell} \norm{\si}_{\bH}^2}
	=
	\sqrt{\sum_{i=1}^{\ell} \si^\top \bH \si}
	\leq
	2\sqrt{\tr(\bH)},
\end{align*}
where the last inequality is because of Lemma~\ref{lem:trace} and the value of $\ell$.
\end{proof}

{
\section{The Connection between Assumption \ref{ass:LL} and Related Work}

Recall that the original relaxed smoothness condition \citep{zhang2019gradient} says the differentiable function $f:\BR^d\to\BR$ holds
\begin{align}\label{eq:second-L0-L1}
    \norm{\nabla^2 f(\x)} \leq L_0 + L_1\norm{\nabla f(\x)}
\end{align}
for all $\vx\in\BR^d$.
Later \citet[Corollary A.3 and A.4]{zhang2020improved} proved (\ref{eq:second-L0-L1}) implies it holds
\begin{align}\label{eq:first-L0-L1-1}
    \norm{\nabla f(\x)-\nabla f(\y)} \leq (AL_0 + BL_1\norm{\nabla f(\x)})\norm{\x-\y}.
\end{align}
and
\begin{align}\label{eq:first-L0-L1-0}
    f(\y) \leq f(\x) + \inner{\nabla f(\x)}{\vy-\vx} + \frac{AL_0 + BL_1\norm{\nabla f(\vx)}}{2}\norm{\vy-\vx}^2
\end{align}
for all $\norm{\vx-\vy}\leq c/L_1$, where $A,B,c$ are some constants. In fact we can strengthen (\ref{eq:first-L0-L1-0}) as follows
\begin{align*}
	&	|f(\y) - f(\x) - \dotprod{\nabla f(\x), \vy-\vx}| \\
= & \left|\int_{0}^1 \inner{\nabla f(\theta\vy+(1-\theta)\vx) -\nabla f(\vx)}{\vx-\vy}\,{\rm d}\theta\right| \\
\leq & \int_{0}^1 \norm{\nabla f(\theta\vy+(1-\theta)\vx) -\nabla f(\vx)}\norm{\vx-\vy}\,{\rm d}\theta \\
\leq & \int_{0}^1  (AL_0 + BL_1\norm{\nabla f(\x)})\norm{\theta\y-\theta\x}\norm{\x-\y}\,{\rm d}\theta \\
\leq & \int_{0}^1  \theta(AL_0 + BL_1\norm{\nabla f(\x)})\norm{\x-\y}^2\,{\rm d}\theta \\
\leq & \frac{AL_0 + BL_1 \norm{\nabla f(\x)}}{2} \norm{\y - \x}^2.
\end{align*}
It is natural to generalize the last term on above equation to the $\bH$-norm, leading to the inequality
\begin{equation*}
		|f(\y) - f(\x) - \dotprod{\nabla f(\x), \y-\x}| \leq \frac{AL_0 + BL_1 \norm{\nabla f(\x)}}{2} \norm{\y - \x}_{\bH}^2,
\end{equation*}
which is our Assumption \ref{ass:LL} if we ignore the difference in constants.}

\end{document}